\documentclass[a4paper,11pt]{article}
\usepackage{amsfonts}
\usepackage{latexsym}
\usepackage{amsthm}
\usepackage{amssymb}
\usepackage{amsmath}
\usepackage{eufrak}
\usepackage{mathrsfs}
\usepackage{geometry}
\usepackage{graphics}
\usepackage{comment}
\usepackage[all]{xy}
\usepackage{rotating}
\usepackage{algorithmic}
\usepackage{algorithm}
\usepackage{color, colortbl}
\usepackage{xcolor}
\usepackage{bm}
\usepackage{mathtools}
\usepackage{varwidth}
\usepackage{lipsum}
 \usepackage{enumitem}

\makeatletter
    \newcommand\iorarrow[2][n]{%
        \mathpalette{\overarrow@{\arrowfill@\relbar\relbar%
            {\rightarrow\mkern -2mu\smash{\mathrlap{_{#1}}}}%
        }}%
       {\,#2\,}%
        \mkern -2mu\hphantom{_{#1}}%
    }
\makeatother

\newtheorem{theo}{Theorem}
\newtheorem{definition}{Definition}
\newtheorem{Lemma}{Lemma}
\newtheorem{Cor}{Corollary}
\newtheorem{Prop}{Proposition}[section]

\newtheorem{Rm}{Remark}

\newtheorem{Ex}{Example}

\newtheorem{notation}{Notation}

\newtheorem*{pps1}{Proposition \ref{nt-minimal}}
\newtheorem*{tmaF}{Theorem \ref{final}}
\newtheorem*{tma2p}{Theorem \ref{iff2pt}}

\newcommand{\lra}{ \longrightarrow }

\author{Natalia A. Viana Bedoya and  Daciberg Lima Gon\c calves}

\title{Decomposability of minimal defect branched coverings over the projective plane}
\date{}
\begin{document}
\maketitle

\begin{abstract}In this paper we characterize  primitive branched coverings with minimal defect over the projective plane with respect to the properties   decomposable and indecomposable. This minimality is achieved when the covering surface is also the projective plane, which corresponds to the last case to be solved. As a consequence,  we obtain a larger class of realizations  than in   \cite{EKS} and   we also  establish a kind of generalization of some results in \cite{ACDHL} and \cite{Lo} about primitive permutation groups. \\

\noindent \textbf{Key words:} {branched coverings, primitive groups, projective plane.}\\
\noindent \textbf{MSC 2020:} {57M12, 20B15, 20B30, 30F10}

\end{abstract}

\section*{Introduction}\label{intro}
Given a  branched covering $\phi: M \rightarrow N$ of degree $d$ between closed connected surfaces, there exists
a finite collection of partitions $\mathscr{D}$ of $d$, \emph{the branch
 datum}, in correspondence with the {\it branch point set} $B_{\phi} \subset N$.  The {\it total defect} $\nu(\mathscr{D})$ associated to $\phi$  is an integer characterized  by the properties:   \\
 \centerline{$ \nu(\mathscr{D})=d \chi(N)-\chi(M) \equiv 0 \pmod{2}.$}
 Moreover, if $N \neq S^{2}$, a finite collection of partitions $\mathscr{D}$ of $d$ satisfying the properties above is realizable
  as a branch datum of  a branched covering of degree $d$  of the form $ M \rightarrow N$. For more 
details see \cite{EKS}, \cite{Ez}, \cite{Hu}.\\
 Collections of partitions satisfying these conditions will be called {\it admissible data.} 
 
 A branched covering   is called
\emph{decomposable} if it can be written as a composition of two non-trivial
coverings (both with degree bigger than 1), otherwise it is 
\emph{indecomposable}. \\
In this work, we study the decomposability property  in terms of   admissible data for
 primitive (surjective on $\pi_{1}$) branched coverings,  since it is  known that the non-primitive ones  are  decomposable (see \cite{BGZ}). We will call {\it decomposable data on $N$} the  admissible data realizable by 
 decomposable primitive branched coverings over a surface $N$.
  
 \vspace{5pt}
This property is already well understood for $\chi(N) \leq 1$ and $\chi(M)\leq0$. The main results are:
 
 \vspace{5pt}
\noindent {\it Proposition 2.6 \cite{BN&GDL1}:
Admissible data $\mathscr{D}$ are decomposable data on $N, $ with $\chi(N)\leq 0$, 
 if and only if there exists a factorization of $\mathscr{D}$
  such that its first factor is a  non-trivial  admissible datum.}

 \vspace{7pt}
\noindent {\it Theorem 3.3 \cite{BN&GDL1}:\label{N<0}
All non-trivial admissible data are realizable on any $N$, with $\chi(N)\leq 0$, by an
indecomposable  branched covering.}

 \vspace{7pt}
\noindent {\it Theorem 3.7 \cite{BN&GDL2}:\label{even}
 Let $d$ be even and $\mathscr{D}=\{D_{1},\dots,D_{s}\}$ an admissible datum. Then, $\mathscr{D}$ is realizable by an indecomposable
   branched covering over $\mathbb{R}P^{2}$ if, and only if, either:\\
(1) $d=2$, or\\
 (2) There is  $i \in \{1,\dots,s\}$ such that
   $D_{i} \neq [2,\dots,2]$, or\\
(3) $d>4$ and $s>2$.}

 \vspace{7pt}
\noindent {\it Theorem 4.6 \cite {BN&GDL3}: \label{bigger}
Let  $\mathscr{D}$ be an admissible datum such that $\nu(\mathscr{D})>d$. Then it can be realized as branch datum of an indecomposable branched covering of degree $d$ over the projective plane.}

 \vspace{5pt}
\noindent By the results above, we see that there exist  three classes of admissible data: the class of data realizable only by decomposable branched coverings (for example the complementar cases in Theorem 3.7 \cite{BN&GDL2}), the class of data
 realizable only by indecomposable branched coverings (for example those that do not satisfy  Proposition 2.6 \cite{BN&GDL1}) and the class of data realizable by both, decomposable and indecomposable branched coverings (for example those data which are decomposable and satisfy the hypothesis of either Theorem 3.3 \cite{BN&GDL1} or Theorem 4.6 \cite {BN&GDL3}).

\vspace{5pt}
In this work we study the remaining case. i.e. $N=\mathbb{R}P^2$,  $d$ odd  and  $\nu(\mathscr{D})=d-1$. This case corresponds  precisely to the branched coverings where the covering space 
is also the projective plane.

\vspace{5pt}
\noindent The paper is divided into  four sections. In Section \S \ref{PTN} we quote the main
definitions and some results related to branched coverings.
\noindent In Section \S\ref{decomp} we characterize decomposable data for this case and
 the main results  is:
 
 \vspace{-3pt}
\begin{pps1}
An admissible datum $\mathscr{D}$ with minimal defect  on $\mathbb{R}P^2$   is decomposable if and only if
  there exists an algebraic factorization of
 $\mathscr{D}$ such that its first factor 
is  a non-trivial admissible datum on $\mathbb{R}P^2$ with minimal defect.
\end{pps1}
\noindent Section \S \ref{leq2} is the main and most technical section of the work. There we study the case of data with two partitions  through the construction of (im)primitive permutation groups, via Proposition \ref{yo} (see \S \ref{decomp}). 
These constructions    are interesting in their own right. As we shall see, they allow us to  obtain a larger class of realizations  than  in   \cite{EKS} (see  Lemma \ref{3cycles0}  and Proposi\c{c}\~ao \ref{N}) and  we also  establish a sort of generalization of some results in \cite{ACDHL} and \cite{Lo} (see \S \ref{UD}),  which are works interested in recognising primitive and imprimitive permutations, as defined in \cite{ACDHL} (see Remark 1 in section 1).\\

\noindent  To state the main result of these section we need the following notation:
 
 \vspace{3pt}
\noindent {\it Given a partition $D=[k_1,\dots,k_r]$ of an integer $d$, we denote by $\gcd(D)$ the greater common divisor of the $k_i$'s, i.e.
$\gcd(D):=\gcd\{k_1,\dots,k_r\}.$}

\vspace{-2pt}
\begin{tma2p}Let $\mathscr{D}=\{D_{1},D_{2}\}$  be a branch datum such that $\nu(\mathscr{D})=d-1$. Then $\mathscr{D}$ is realizable by an indecomposable branched covering if and only if  $\gcd(D_{i})=1$ for $i=1,2$ and  $\mathscr{D}\neq \{[2,\dots,2,1],[2,\dots,2,1]\} $.
\end{tma2p}
\noindent In Section \S \ref{arbpt}  we study the case  with more than two partitions and the main result is:

\vspace{-2pt}
\begin{tmaF}
 For $k\geq3$, let  $\mathscr{D}=\{D_1, \dots, D_k\}$  be a collection of partitions of $d \in \mathbb{Z}$   odd  non-prime,  such that $\nu(\mathscr{D})=d-1$. Then $\mathscr{D}$ is realizable by an indecomposable branched covering if and only if $\gcd(D_i)=1$ for $i=1, \dots, k$.
\end{tmaF}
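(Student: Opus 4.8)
The plan is to translate the topological statement into a problem about permutation groups and then to reduce the case $k\ge 3$ to the already-settled two-partition case, Theorem \ref{iff2pt}.

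First I would use Proposition \ref{yo} to pass from branched coverings to monodromy data: realizing $\mathscr{D}=\{D_1,\dots,D_k\}$ by a minimal-defect branched covering over $\mathbb{R}P^2$ amounts to choosing $\sigma,\tau_1,\dots,\tau_k\in S_d$ with $\tau_i$ of cycle type $D_i$, satisfying $\sigma^2\tau_1\cdots\tau_k=1$ and generating a transitive group $G=\langle\sigma,\tau_1,\dots,\tau_k\rangle$ on $d$ symbols; the covering is indecomposable precisely when $G$ is primitive as a permutation group. I would record two structural consequences of the hypotheses: since $\nu(\mathscr{D})=d-1$ and each of the $k\ge 3$ branch points contributes defect at least $1$, every $D_i$ has defect at most $d-k$, hence at least $k$ parts; and because $d$ is non-prime, primitivity is a genuine restriction, whereas for $d$ prime every transitive $G$ is automatically primitive.

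For the sufficiency direction assume $\gcd(D_i)=1$ for all $i$. The idea is to fuse the partitions $D_2,\dots,D_k$ into a single partition and invoke Theorem \ref{iff2pt}. Concretely I would construct permutations $\tau_2,\dots,\tau_k$ of types $D_2,\dots,D_k$ whose product $\beta=\tau_2\cdots\tau_k$ has a cycle type $B$ that is \emph{defect-additive}, i.e. $\mathrm{def}(B)=\sum_{i=2}^{k}\mathrm{def}(D_i)$, and satisfies $\gcd(B)=1$. The defect-additivity guarantees that $\{D_1,B\}$ is again a minimal-defect datum with $\nu=d-1$ and two partitions, both of trivial $\gcd$. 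Using the freedom coming from $k\ge 3$ and from $d$ being composite I would also arrange $\{D_1,B\}\neq\{[2,\dots,2,1],[2,\dots,2,1]\}$, the single datum excluded in Theorem \ref{iff2pt}. That theorem then yields a $\sigma$ with $\sigma^2\tau_1\beta=1$ and $\langle\sigma,\tau_1,\beta\rangle$ primitive. Since $G=\langle\sigma,\tau_1,\tau_2,\dots,\tau_k\rangle$ contains this primitive subgroup and is itself transitive, every $G$-invariant partition is invariant under the subgroup and hence trivial, so $G$ is primitive; this produces the desired indecomposable realization. The existence of the fusing permutations, with prescribed types and a $\gcd$-$1$, defect-additive product, is exactly the type of realization supplied by Lemma \ref{3cycles0} and Proposition \ref{N}.

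For the necessity direction I would prove the contrapositive: if $\gcd(D_j)=g>1$ for some $j$, then every realization is decomposable. In any realization $\tau_j$ has all of its cycle lengths divisible by $g$, so it preserves the splitting of each of its cycles into $g$ residue classes modulo $g$, giving a candidate system of $g$ blocks of size $d/g$. The heart of the argument is to show that in the minimal-defect regime this candidate block system is invariant under the whole of $G$: the total defect budget $\nu(\mathscr{D})=d-1$ is too small for the remaining generators $\sigma,\tau_1,\dots,\hat\tau_j,\dots,\tau_k$ to mix the residue classes, so the partition descends to an algebraic factorization of $\mathscr{D}$ whose first factor is a non-trivial minimal-defect datum on $\mathbb{R}P^2$. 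By Proposition \ref{nt-minimal} this forces $\mathscr{D}$ to be decomposable in every realization, so it cannot be realized by an indecomposable covering.

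The main obstacle is the constructive step in the sufficiency direction: guaranteeing that the prescribed types $D_2,\dots,D_k$ admit permutations whose product is defect-additive \emph{and} has trivial $\gcd$, while simultaneously steering the resulting pair $\{D_1,B\}$ away from the unique forbidden datum. This is a delicate packing problem, most sensitive when the $D_i$ are built predominantly from $2$-cycles, and it is precisely here that the explicit permutation constructions of Section \ref{leq2}, going beyond those available from \cite{EKS}, are needed. A secondary difficulty is the rigidity claim in the necessity direction, namely that minimal defect forces the residue partition attached to $\tau_j$ to be globally $G$-invariant; I expect this to follow from a careful accounting of how much each low-defect generator can displace points across residue classes.
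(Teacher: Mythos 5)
Your sufficiency argument is, in outline, the paper's own: fuse $D_2,\dots,D_k$ into a single partition $L$ (your $B$) by choosing $\tau_2,\dots,\tau_k$ so that the generated group is transitive (defect-additivity of the product is then automatic, by the lemma at the start of Section \ref{arbpt}), arrange $\gcd(L)=1$ and $\{D_1,L\}\neq\{[2,\dots,2,1],[2,\dots,2,1]\}$, apply the two-point result (Proposition \ref{2-indecomp}), conjugate so the abstract $\lambda\in L$ becomes the actual product $\tau_2\cdots\tau_k$, and conclude primitivity of $G$ from primitivity of the subgroup it contains. What you defer as a ``packing problem'' is exactly the three-case analysis the paper carries out (according to whether $\nu(D_1)>(d-1)/2$, $D_1=[2,\dots,2,1]$, or $\nu(D_1)<(d-1)/2$), and your pointers to Lemma \ref{3cycles0} and Proposition \ref{N} are not the relevant tools for it --- the paper instead makes explicit choices inside the EKS-type connection construction (connect the largest cycles, or preserve a $1$-cycle). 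So this half is incomplete but on the paper's track.

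The genuine gap is in the necessity direction. You propose to show that the residue-class blocks attached to $\tau_j$ (where $\gcd(D_j)=g>1$) are invariant under all of $G$ by a ``defect budget'' argument. Two problems. First, that block system is not defined by $\tau_j$ alone: in the paper's Proposition \ref{2-trans} the blocks are orbits of $(\alpha\beta)^{g}$, which requires the full product $\tau_1\cdots\tau_k$ to be a $d$-cycle, i.e.\ requires $\langle\tau_1,\dots,\tau_k\rangle$ to be transitive. Second --- and this is the case your proposal misses entirely --- the defect constrains only the branch-point permutations $\tau_i$; the generator $\sigma$ (the paper's $\omega$) carries no defect, being constrained only by $\sigma^2=(\tau_1\cdots\tau_k)^{-1}$, so no accounting of defect among the $\tau_i$ controls how $\sigma$ moves points across residue classes. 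When $\langle\tau_1,\dots,\tau_k\rangle$ is not transitive and transitivity is achieved only through $\sigma$, the paper does not prove block invariance at all: it proves imprimitivity indirectly, combining Wielandt's Proposition \ref{HW} (a primitive group that is not regular of prime degree equals $\langle G_x,G_y\rangle$ for any two points) with a structural analysis of $\omega$ (Proposition \ref{N} and its generalization \ref{N-g}) and a parity argument on the number of occurrences of $\omega$ in words representing elements of $G_x$ and $G_y$ (Propositions \ref{dospuntos} and \ref{dospuntosG} in the Appendix). Without this second mechanism, or a replacement for it, your necessity proof does not go through.
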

\noindent In the Appendix we show   generalisations of some results in \S \ref{leq2}, which are going to be used in \S 4.

\section{Preliminaries}\label{PTN}
  \subsection{On permutation groups}\label{PG}
The symmetric group of a set $\Omega$ with $d$ elements will be denote by $\Sigma_{d}$ and 
 by $1_{d}$ its identity element. A subgroup $G < \Sigma_{d}$ will be called {\it permutation group of degree $d$}. 
 
  The \emph{cyclic decomposition} of a permutation $\alpha$ is the factorization of $\alpha$ as a product of disjoint cycles. The set of  lengths  of these cycles,  including the trivial ones, defines a partition of $d$, 
 $D_{\alpha}=[d_{\alpha_{1}},\dots,d_{\alpha_{t}}]$, called
 \emph{the cyclic structure of} $\alpha$. \\
 We say that $\alpha$ is an \emph{even permutation} if $\nu(\alpha):=\sum_{i=1}^{t}(d_{\alpha_{i}}-1) \equiv 0 \pmod{2}$. Given  a partition $D$ of $d$, we write  $\alpha \in
D$ if 
$D=D_{\alpha}$ and  we define $\nu(D):=\nu(\alpha)$.\\
A  $r$-\emph{cycle}, for $1<r \leq d$, is permutation  such that its cyclic decomposition has a single non-trivial cycle of length $r$. \\
Permutations $\alpha,\beta \in \Sigma_{d}$ are \emph{conjugate}
if there is $\lambda \in \Sigma_{d}$  such that
$\alpha^{\lambda}:=\lambda \alpha \lambda^{-1}=\beta$, which is equivalent to $\alpha$ and $\beta$ having
 the same cyclic structure.

Let $G$ be a permutation group of degree $d$, $\alpha \in G$   and $x \in \Omega$. We denote by
$x^{\alpha}$  the image of $x$ by $\alpha$, by 
 $G_{x}:=\{\alpha \in G: x^{\alpha}=x\}$   the {\it isotropy subgroup of $x$} and by
  $x^{G}:=\{x^{\alpha}:\alpha\in G\}$ the {\it orbit of $x$ by $G$}. \\
  For $H \subset G$ and $\Lambda \subset \Omega$ , we define the subsets $Supp(H):=\{x
 \in \Omega: x^{\mu} \neq x \textrm{ for some $\mu \in H$}\}$ the {\it support  of $H$},
 $Fix(H):=\{x \in \Omega:x^{\mu}=x \textrm{ for all $\mu \in H$}\}$ and   $\Lambda^{\alpha}:=\{y^{\alpha}:y \in \Lambda\}$.\\
A nonempty subset $\Lambda \subset \Omega$ is a \emph{block} of
 $G$ if
 for each $\alpha \in G$ either $\Lambda^{\alpha} = \Lambda$ or
 $\Lambda^{\alpha} \cap \Lambda =\emptyset$. A block $\Lambda$ is
 \emph{trivial} if either $\Lambda =\Omega$ or $\Lambda=\{ x \}$ for some $x \in
 \Omega$.
 
 $G$ is 
 \emph{transitive} if for all
 $x,y \in \Omega$ there is $\alpha \in G$ such that
 $x^{\alpha}= y$.
  Given a block $\Lambda$ of a transitive $G$, the set
 $\Gamma:=\{\Lambda^{\alpha}:\alpha \in G\}$ defines a partition of $\Omega$
 in blocks, \emph{a system of blocks
 containing} $\Lambda$, and the cardinality of $\Lambda$ divides the cardinality of $\Omega$. 
 
 $G$ transitive  is
 \emph{primitive}
 if $G$ has no nontrivial blocks on $\Omega$. Otherwise $G$ is called
 \emph{imprimitive}. A classical characterization of this property is given by the following proposition:
 
\begin{Prop}[Cor. 1.5A, \cite{DM}]\label{dixon}
Let $G$ be a transitive permutation group on a set $\Omega$ with at least two
points. Then $G$ is primitive if and only if each isotropy subgroup $G_{x}$,
for $x \in \Omega$,
is a maximal subgroup of $G$. \qed
\end{Prop}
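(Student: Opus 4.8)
The plan is to reduce the statement to a Galois-type correspondence between the blocks of $G$ through a fixed point and the subgroups of $G$ lying above the corresponding stabilizer. Fix $x \in \Omega$. I would associate to each block $\Lambda$ with $x \in \Lambda$ its setwise stabilizer $G_{\{\Lambda\}} := \{g \in G : \Lambda^{g} = \Lambda\}$, and to each subgroup $H$ with $G_{x} \leq H \leq G$ the orbit $x^{H} := \{x^{h} : h \in H\}$. The goal is to show these two assignments are mutually inverse, inclusion-preserving bijections between the poset of blocks containing $x$ and the interval of intermediate subgroups $[G_{x}, G]$.

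First I would verify that each assignment is well defined. For a block $\Lambda \ni x$ one checks $G_{x} \leq G_{\{\Lambda\}}$: if $g$ fixes $x$ then $x \in \Lambda \cap \Lambda^{g}$, so the block condition forces $\Lambda^{g} = \Lambda$. Conversely, for $G_{x} \leq H$ I would show $x^{H}$ is a block: if $(x^{H})^{g} \cap x^{H} \neq \emptyset$ then $x^{h_{1} g} = x^{h_{2}}$ for some $h_{1}, h_{2} \in H$, so $h_{1} g h_{2}^{-1} \in G_{x} \leq H$ and hence $g \in H$, giving $(x^{H})^{g} = x^{H}$. The two maps invert each other by short orbit--stabilizer computations that crucially use transitivity: every $y \in \Lambda$ is $x^{g}$ for some $g \in G$, and $y \in \Lambda \cap \Lambda^{g}$ forces $g \in G_{\{\Lambda\}}$, whence $x^{G_{\{\Lambda\}}} = \Lambda$; and $G_{\{x^{H}\}} = H$ follows from the same $g h^{-1} \in G_{x}$ trick. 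Both maps obviously preserve inclusion, so together they form an order isomorphism. Under it the trivial blocks correspond to the extreme subgroups: $\{x\} \leftrightarrow G_{x}$ (since $x^{G_{x}} = \{x\}$) and $\Omega \leftrightarrow G$ (since $x^{G} = \Omega$ by transitivity).

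With the correspondence in hand the equivalence is immediate: $G_{x}$ is maximal in $G$ exactly when there is no subgroup strictly between $G_{x}$ and $G$, which by the order isomorphism means there is no block $\Lambda$ with $\{x\} \subsetneq \Lambda \subsetneq \Omega$, i.e.\ no nontrivial block through $x$. To pass to the global primitivity condition I would invoke transitivity once more: any nontrivial block $\Lambda$ lies in a system $\{\Lambda^{g}\}$ partitioning $\Omega$, so some translate $\Lambda^{g}$ (again a nontrivial block, of the same cardinality) contains $x$; hence $G$ has a nontrivial block somewhere iff it has one through $x$. Finally, transitivity gives $G_{x^{g}} = g^{-1} G_{x} g$, so all point stabilizers are conjugate and one is maximal iff all are, which justifies the quantifier ``each isotropy subgroup $G_{x}$.'' The only genuinely delicate point is the bijectivity of the correspondence --- specifically checking that $x^{H}$ is honestly a block and that setwise stabilization recovers $H$ --- but these reduce to the elementary coset manipulations indicated above.
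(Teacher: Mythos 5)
Your proof is correct and complete: the two assignments $\Lambda \mapsto G_{\{\Lambda\}}$ and $H \mapsto x^{H}$ are verified to be well defined and mutually inverse with exactly the coset computations that are needed, and you correctly handle the two quantifier issues (passing from ``no nontrivial block through $x$'' to no nontrivial block anywhere via translates, and from one stabilizer to all via $G_{x^{g}}=g^{-1}G_{x}g$). The paper itself offers no proof --- the statement is quoted as Corollary 1.5A of \cite{DM} with a \emph{qed} mark --- and your argument is precisely the standard one in that reference, namely the order isomorphism between the interval $[G_{x},G]$ of intermediate subgroups and the poset of blocks containing $x$, so it matches the intended source proof.
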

  The following Lemma is a practical criterion for recognising the primitivity of a permutation group in terms of the  existence of elements in the group with 
certain  cyclic structure.  Moreover, it partially generalises   Theorem 5.4 of \cite{ACDHL}, see Remark \ref{GEN1}.
\begin{comment}
\begin{Ex}\label{e1}
A transitive permutation group $G<\Sigma_{d}$ containing a  $(d-1)$-cycle is
  primitive.  Without loss of generality let us suppose that
 $g=(1\dots d-1)(d) \in G$. Then
 any  proper subset $\Lambda$ of $\{1,\dots,d\}$ containing $d$ and at least one more element satisfies $\Lambda^{g}\neq \Lambda$ and
$\Lambda^{g}\cap \Lambda \neq \emptyset$. Thus the blocks
of $G$ are
trivial and $G$ is primitive.
\end{Ex}
\begin{Ex}\label{referee}
If $gcd(l,d)=1$ and $l$ is greater than any non-trivial divisor of $d$ then any transitive permutation group $G < \Sigma_{d}$ containing a $l$-cycle is primitive
(this holds, for example, if $d=2l\pm1$). In fact, we can assume that $G$ contains the cycle $(1 \dots l)$. If there is a block of $G$ containing two elements $i$ and $j$
with $i \leq l$ and $j>l$ then it also contains $1,\dots,l$, thus the cardinality of the block is $\geq l+1$, hence it  equals $d$ and the block is trivial.
 Otherwise the cardinality of each block divides both $l$ and $d-l$, hence it equals 1, thus all blocks of $G$ are trivial. Hence $G$ is a primitive permutation group.
\end{Ex}
\end{comment}

\begin{Lemma}\label{gcd}
A transitive permutation group $G$ of degree $d$ odd, containing a  permutation with cyclic structure $[d-2c,c,c]$ for  $gcd(d,c)=1$, is primitive. 
\end{Lemma}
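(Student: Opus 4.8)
The plan is to argue by contradiction. Suppose $G$ is imprimitive and fix a nontrivial system of blocks $\Gamma=\{B_1,\dots,B_m\}$, each of common cardinality $b$ with $b\mid d$ and $1<b<d$, so that $m=d/b$ satisfies $1<m<d$. Since $d$ is odd, both $b$ and $m$ are odd; and since $\gcd(d,c)=1$, every divisor of $d$ is coprime to $c$. Let $\alpha\in G$ be the given permutation of cyclic structure $[d-2c,c,c]$, and let $\bar\alpha\in\Sigma_m$ be the permutation it induces on the blocks.

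The first, and key, step is to show that $\bar\alpha$ is the identity, i.e. that $\alpha$ stabilises every block setwise. For this I would use the standard \emph{imprint} relation: if $x\in B_i$ and the $\bar\alpha$-cycle through $B_i$ has length $\ell$, then the whole $\alpha$-cycle of $x$ is contained in the $\ell b$ points of that block-cycle, and its length is a multiple of $\ell$. Applying this to each of the three cycles of $\alpha$, the relevant $\ell$ divides one of $d-2c$ or $c$, and also divides $d$ (since $\ell\mid m\mid d$); in particular $\ell$ is odd. For a cycle of length $d-2c$ one then gets $\ell\mid (d-(d-2c))=2c$, hence $\ell\mid c$ (as $\ell$ is odd), and for a cycle of length $c$ one has $\ell\mid c$ directly. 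Thus $\ell\mid\gcd(c,d)=1$, so $\ell=1$ in every case and $\bar\alpha=\mathrm{id}$.

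Once $\alpha$ fixes every block setwise, each block is a union of entire $\alpha$-cycles (the $\alpha$-orbit of any $x\in B_i$ stays in $B_i$). Since $\alpha$ has only three cycles and cycles cannot be split between blocks, the $m$ nonempty blocks use disjoint nonempty subfamilies of these three cycles, whence $m\le 3$. As $m$ is odd and $m>1$, this leaves only $m=3$, and then each block must be exactly one cycle of $\alpha$, forcing the cycle lengths $d-2c,\,c,\,c$ all to equal $b=d/3$. This gives $c=d/3$, so that $3\mid d$ and $\gcd(c,d)=c>1$ (the degenerate possibility $d=3$ yields block size $b=1$, which is excluded); either way we contradict $\gcd(c,d)=1$. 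Hence no nontrivial block system exists, and $G$ is primitive.

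I expect the only genuine obstacle to be the careful justification of the imprint relation between the cyclic structure of $\alpha$ and that of $\bar\alpha$ — in particular the facts that each point-cycle lies inside a single block-cycle and that its length is divisible by the block-cycle length — together with the bookkeeping that keeps $b$, $m$ and $c$ pairwise coprime to $d$. After that is in place, the remaining combinatorics, and the small degenerate cases ($c=1$, or $d-2c$ small), are routine.
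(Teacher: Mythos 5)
Your Step 1 contains a genuine gap, and it is exactly where the whole argument pivots: the claim that every cycle length $\ell$ of the induced permutation $\bar{\alpha}$ on blocks satisfies $\ell \mid m \mid d$. Cycle lengths of a permutation of $m$ objects need not divide $m$ (a permutation of $5$ blocks can perfectly well contain a $3$-cycle), so ``$\ell \mid m$'' is not a fact but an assertion, and with it you lose both ``$\ell \mid d$'' and ``$\ell$ is odd'', on which the rest of Step 1 ($\ell \mid 2c \Rightarrow \ell \mid c \Rightarrow \ell \mid \gcd(c,d)=1$) depends. Worse, the assertion is false precisely in configurations your argument still has to exclude at that point: if the two $c$-cycles of $\alpha$ sat interleaved inside a single block-cycle containing nothing else, then $\ell b = 2c$ with $b$ odd (as $b \mid d$ and $d$ is odd), which forces $\ell$ to be \emph{even}. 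So ``$\ell$ is odd'' cannot be invoked as a blanket fact; it only becomes true after the problematic configurations have been ruled out by other means. Note that the ``imprint relation'' you flagged as the main obstacle is actually the sound part of your proof: each point-cycle does lie in the union of the blocks of one block-cycle, and its length is divisible by $\ell$.

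The repair is to route the divisibility through $b$ rather than through $m$: the union of the blocks in a block-cycle is $\alpha$-invariant, hence a disjoint union of complete $\alpha$-cycles, so $\ell b \in \{c,\,2c,\,d-2c,\,d-c,\,d\}$. Since $b \mid d$, $b$ is odd and $\gcd(d,c)=1$, each of the first four values forces $b \mid c$ and hence $b \mid \gcd(c,d)=1$, contradicting nontriviality; in the last case $\ell = m$ divides both $d-2c$ and $c$, so $m \mid \gcd(d-2c,c)=\gcd(d,c)=1$, again a contradiction. In other words, in your framework one should mostly be contradicting $b>1$ directly rather than proving $\bar{\alpha}=\mathrm{id}$; once this is done, your Step 2 is not needed at all. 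For comparison, the paper's own proof avoids the quotient action entirely: it follows a single nontrivial block, shows no block can consist only of points of the $(d-2c)$-cycle (your intended parity argument, applied to $b$ rather than $\ell$), then uses $\gcd(d-2c,c)=1$ to show a block meeting two cycles absorbs both, forcing $|\Lambda|\geq d-c$, and finally excludes $|\Lambda|=d-c$ because $(d-c)\mid d$ would give $d\leq 2c$.
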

\begin{proof}
Let $G$ be a transitive permutation group which contains a permutation $\lambda$ such that  $\lambda \in [d-2c,c,c]$.
By contradiction, suppose $G$ imprimitive and let $\Lambda$ be  a non-trivial block of $G$.
  If $\Lambda$ is defined only by elements in the $(d-2c)$-cycle necessarily $gcd(d-2c, d)=k\neq 1$ and  there are  integers $0<s<t$ such that $d=kt$ and $d-2c=ks$. Hence $2c=k(t-s)$ and since $k$ is odd (because $d$ is odd) then $k$ divides $c$, a contradiction because $gcd(d,c)=1.$\\
 Since   there exists a block  $\Lambda_1$ which contains an element of  the $(d-2c)$-cycle, from above this block must contains an element  of 
the  $(c)$-cycle. 
 By applying $\lambda$  to the block $\Lambda_1$  several times, the block will absorb both cycles, because $gcd(d-2c,c)=1$. Then $|\Lambda|\geq d-c.$ But  $|\Lambda|>d-c$ implies that elements in the other $(c)$-cycle are in $\Lambda$ and again, by applying $\lambda$ several times, $\Lambda$ will absorb the cycle and the block will be  trivial. If $|\Lambda|= d-c$ then $d-c$ divides $d$ and therefore $d-c$ divides $c$. Hence $d-c \leq c$ and $d-2c \leq 0$, a contradiction.
\end{proof}

\begin{Rm}\label{GEN1}In \cite{ACDHL}, a  permutation is said to be imprimitive if there exists an imprimitive group containing it. Otherwise, the permutation is primitive.  Theorem 5.4 of \cite{ACDHL} states that a permutation with cyclic structure $[1, 1, n-2]$ is primitive if and only if $n$ is odd.  Lemma \ref{gcd}
generalises the ``if" part of the result above.  Also the converse of the Lemma \ref{gcd} does not hold. 
\end{Rm}

\subsection{Branched coverings on the projective plane}\label{pbc}
A surjective continuous open map $\phi:M \lra N$ between closed surfaces such
that:
 \vspace*{-7pt}
\begin{itemize} [noitemsep, leftmargin=15pt]
\item for $x \in N$, $\phi^{-1}(x)$ is a totally disconnected set, and
\item there is  a non-empty discrete set $B_{\phi} \subset N$  such that the
restriction $\phi|_{M \setminus \phi^{-1}(B_{\phi})}$ is an ordinary
unbranched
covering of degree $d$,
\end{itemize}
 \vspace*{-7pt}
is called a \emph{branched covering of degree d over N}
 and it is  denoted  by
$(M,\phi,N,B_{\phi},d)$.  $N$ is \emph{the base surface}, $M$ is
\emph{the covering surface} and $B_{\phi}$ is \emph{the branch point set}.\\
The set $B_{\phi}$ is  defined by
the image of the points in $M$ on which $\phi$ fails to be a local
homeomorphism. Thus each $x \in B_{\phi}$ determines a non-trivial
partition $D_{x}$ of $d$, defined by the local degrees of $\phi$.
 The collection $\mathscr{D}:=\{D_{x}\}_{x \in B_{\phi}}$ is called
 \emph{the branch datum} and its  \emph{total defect} is the positive integer
defined by
$\nu({\mathscr{D}}):=\sum_{x \in B_{\phi}} \nu(D_{x})$. \\
The total defect  satisfies
the \emph{Riemann-Hurwitz formula} (see \cite{EKS}):
\begin{eqnarray}\label{rhf}
\nu(\mathscr{D})=d \chi(N)-\chi(M).
\end{eqnarray}
Associated to $(M,\phi,N,B_{\phi},d)$ we have a permutation group, {\it the monodromy group of $\phi$}, given by the
image of the \emph{Hurwitz's representation}
\begin{eqnarray}\label{Hr}
 \rho_{\phi}: \pi_{1}(N-B_{\phi},z) \lra \Sigma_{d},
 \end{eqnarray}
that sends each class $\alpha \in \pi_{1}(N-B_{\phi},z)$ to a permutation of $\phi^{-1}(z)=\{z_{1},
\dots,z_{d}\}$ defined by the terminal point of the lifting of a loop
in $\alpha$ after fixing the initial point.

\begin{Prop}[Lemma 2.1 c), \cite{EKS}]\label{2.1-EKS}
Let $\mathscr{D}= \{D_1,\dots,D_k\}$ be a collection of partitions of d (repetitions allowed). Then  there exists a  branched covering of degree $d$, $\phi: M \rightarrow \mathbb{R}P^2$,  with $M$ closed and connected and with branch data $\mathscr{D}$ if, and only if  there are permutations $\alpha_i \in D_i$ and $\omega \in \Sigma_d$ such that $\langle \alpha_1,\dots, \alpha_k,\omega | \alpha_1\dots \alpha_k \omega^2=1 \rangle$ acts transitively on $\{1,\dots,d\}.$ \qed
\end{Prop}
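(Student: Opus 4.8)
The plan is to invoke the classical dictionary between branched coverings of a surface and transitive representations of the fundamental group of the complement of the branch set into $\Sigma_d$ (the topological Riemann existence theorem, going back to Hurwitz). The first step is therefore to identify $\pi_1(\mathbb{R}P^2 \setminus B_\phi, z)$ explicitly. Writing $B_\phi = \{x_1, \dots, x_k\}$, one has $\chi(\mathbb{R}P^2 \setminus B_\phi) = 1 - k$, and since any non-compact surface deformation retracts onto a graph this group is free of rank $k$. Choosing small loops $\gamma_i$ around each $x_i$ together with the image $\omega$ of the crosscap generator of $\pi_1(\mathbb{R}P^2) = \langle \omega \mid \omega^2\rangle$, one obtains the presentation $\langle \gamma_1, \dots, \gamma_k, \omega \mid \gamma_1 \cdots \gamma_k\,\omega^2 = 1\rangle$, in which the single relation records that the product of the peripheral loops is homotopic to the boundary $\omega^2$ of the disk used to build $\mathbb{R}P^2$ (eliminating $\gamma_k$ confirms the free rank $k$). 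This is exactly the relation in the statement, so a homomorphism $\rho\colon \pi_1(\mathbb{R}P^2 \setminus B_\phi)\to\Sigma_d$ is the same datum as a tuple $(\alpha_1,\dots,\alpha_k,\omega)$ satisfying $\alpha_1\cdots\alpha_k\omega^2=1$.

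For the forward implication I would take the Hurwitz representation $\rho_\phi$ of $(M,\phi,\mathbb{R}P^2,B_\phi,d)$ from \eqref{Hr} and set $\alpha_i := \rho_\phi(\gamma_i)$, letting $\omega\in\Sigma_d$ be the image under $\rho_\phi$ of the crosscap generator. The relation $\alpha_1\cdots\alpha_k\omega^2=1$ holds because it holds in $\pi_1$; the condition $\alpha_i\in D_i$ holds because the cyclic structure of the local monodromy around $x_i$ is precisely the partition $D_{x_i}$ determined by the local degrees of $\phi$, by the very definition of the branch datum; and the subgroup $\langle\alpha_1,\dots,\alpha_k,\omega\rangle$ is transitive because $M$, being the covering surface, is connected, the orbits of the monodromy group being in bijection with the connected components of the unbranched cover, hence of $M$.

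For the converse, given permutations $\alpha_i\in D_i$ and $\omega$ generating a transitive subgroup and satisfying the relation, the presentation above makes $\gamma_i\mapsto\alpha_i$, $\omega\mapsto\omega$ a well-defined homomorphism $\rho$. Standard covering-space theory produces from $\rho$ a connected unbranched covering $\phi_0\colon M_0\to\mathbb{R}P^2\setminus B_\phi$ of degree $d$, the connectedness coming from transitivity. The remaining step is to complete $\phi_0$ to a branched covering over all of $\mathbb{R}P^2$: around each $x_i$ one deletes a small disk, and the restriction of $\rho$ to $\langle\gamma_i\rangle$ shows that the preimage of the punctured disk is a disjoint union of punctured disks, one for each cycle of $\alpha_i$, the one attached to a cycle of length $\ell$ covering its image as $z\mapsto z^\ell$. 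Filling in the centres yields a closed surface $M$ and extends $\phi_0$ to $\phi\colon M\to\mathbb{R}P^2$; by construction the local degrees over $x_i$ are the cycle lengths of $\alpha_i$, i.e. the branch datum is exactly $D_i$, and $M$ is connected since $M_0$ is.

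I expect the delicate point to be this completion step together with the initial identification of $\pi_1$. One must check carefully that filling in the punctures produces a genuine closed (possibly non-orientable) manifold and that the resulting branch datum is exactly $\mathscr{D}$ rather than some coarsening of it; this is where the bookkeeping of the cycles of each $\alpha_i$ against the preimages of $x_i$ matters, and where the non-orientability of the base, which lets $\omega$ be an \emph{arbitrary} permutation rather than forcing a parity constraint, must be accommodated without extra hypotheses. Since the statement is quoted as Lemma 2.1 c) of \cite{EKS}, these verifications are precisely its content, and the argument above is the route I would follow to reconstruct it.
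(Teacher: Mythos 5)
Your proposal is correct. Note that the paper itself gives no proof of this proposition --- it is quoted verbatim from Lemma 2.1 c) of \cite{EKS} and marked as proved elsewhere --- so the only comparison available is with the cited source: your argument (the presentation $\langle \gamma_1,\dots,\gamma_k,\omega \mid \gamma_1\cdots\gamma_k\,\omega^2=1\rangle$ of $\pi_1(\mathbb{R}P^2\setminus B_\phi)$, the Hurwitz representation for necessity, and covering-space theory together with filling in the punctured-disk preimages for sufficiency) is precisely the standard Riemann existence argument by which that lemma is established, and the two delicate points you flag (the identification of $\pi_1$ and the completion step, including the fact that non-orientability of the base places no parity constraint on $\omega$) are handled correctly.
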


We state below a  theorem from \cite {EKS} about the realizability of branched coverings  over 
$\mathbb{R}P^{2}.$   We will use that theorem to describe all realizable branched data of 
branched covering over $\mathbb{R}P^2$ which are primitive (surjective on $\pi_1$). 

\begin{theo}[Theorem 5.1,  \cite{EKS}]\label{realization}
Let $\mathscr{D}$ be a collection of partitions of $d$. Then there is a branched covering $\phi:M\rightarrow \mathbb{R}P^{2}$ of degree $d$, with M connected and with branch data $\mathscr{D}$ if and only if 
\begin{eqnarray}\label{hcpp}
d-1\leq \nu(\mathscr{D})\equiv 0 \pmod{2}.
\end{eqnarray} 
Moreover M can be chosen to be nonorientable.
\qed
\end{theo}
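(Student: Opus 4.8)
The plan is to read the statement through the algebraic criterion of Proposition~\ref{2.1-EKS}: realizability of $\mathscr{D}=\{D_1,\dots,D_k\}$ by a connected branched covering over $\mathbb{R}P^2$ is equivalent to the existence of $\alpha_i\in D_i$ and $\omega\in\Sigma_d$ with $\alpha_1\cdots\alpha_k\,\omega^2=1$ and $\langle\alpha_1,\dots,\alpha_k,\omega\rangle$ transitive. The Euler characteristic of $M$ is then pinned down by the Riemann--Hurwitz formula \eqref{rhf}, $\nu(\mathscr{D})=d-\chi(M)$, and the orientability of $M$ is controlled by the orientation character of the induced unbranched covering: writing the monodromy as $x_i\mapsto\alpha_i$ on the meridians and $t\mapsto\omega$ on the generator of $\pi_1(\mathbb{R}P^2)$, the surface $M$ is orientable if and only if there is a sign function $\epsilon\colon\{1,\dots,d\}\to\{\pm1\}$, constant on the orbits of $\langle\alpha_1,\dots,\alpha_k\rangle$, with $\epsilon(j^{\omega})=-\epsilon(j)$ for all $j$. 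These three inputs — the word relation, \eqref{rhf}, and the sign criterion — are what I would combine.

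For necessity, suppose $\mathscr{D}$ is realized. From $\alpha_1\cdots\alpha_k=\omega^{-2}$ the product is a square, hence even, so comparing signs gives $(-1)^{\nu(\mathscr{D})}=\operatorname{sgn}(\omega^{-2})=1$ and $\nu(\mathscr{D})\equiv0\pmod2$. For the lower bound I use \eqref{rhf}: every closed nonorientable surface has $\chi(M)\le1$, so $\nu(\mathscr{D})=d-\chi(M)\ge d-1$. Here the nonorientability clause is essential rather than cosmetic: the only smaller admissible value, $\nu=d-2$, forces $\chi(M)=2$, i.e. $M=S^2$, which can indeed occur (for $d$ even) but only with $M$ orientable. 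Equivalently, the sign criterion shows that orienting $M$ makes $\omega$ interchange two colour classes of equal size, so $d$ must be even; consequently, for $d$ odd nonorientability is automatic.

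The substance is sufficiency. Assume $d-1\le\nu(\mathscr{D})\equiv0\pmod2$; discarding trivial partitions I may take each $D_i$ non-trivial. The idea is to fix $\omega$ first, to secure nonorientability (and, when $d$ is odd, transitivity as well), and then to realise the prescribed cycle types as a factorisation of $\omega^{-2}$. When $d$ is odd I would take $\omega$ a $d$-cycle: then $\omega$ alone is transitive, $\omega^{-2}$ is again a $d$-cycle, and $d$ odd makes $M$ nonorientable for free. When $d$ is even I would instead take $\omega$ with a single fixed point (legitimate once the target $\beta:=\omega^{-2}$ is arranged to fix that point), which destroys every admissible sign function and hence forces $M$ nonorientable; note that for $d$ even the parity constraint upgrades $\nu\ge d-1$ to $\nu\ge d$, matching the requirement $\chi(M)\le0$ for a nonorientable $M$. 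It then remains to produce $\alpha_i\in D_i$ with $\alpha_1\cdots\alpha_k=\omega^{-2}$, a problem of factoring a fixed permutation into prescribed conjugacy classes; the sign condition is met because $\nu(\mathscr{D})$ is even, and the inequality $\nu(\mathscr{D})\ge d-1\ge\nu(\omega^{-2})$ provides the total defect such a factorisation needs.

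The main obstacle is exactly this simultaneous fit: hitting the prescribed types $D_1,\dots,D_k$, forcing their product to be the square $\omega^{-2}$ of a transitive $\omega$, and keeping $M$ nonorientable, for an arbitrary admissible datum. I expect to handle it by induction on $k$ and on the excess $\nu(\mathscr{D})-(d-1)$, using two elementary moves realised on permutations by splitting or merging cycles to reduce any datum to a short list of base cases, each exhibited explicitly. The delicate points to verify are that the connectivity of the factorisation can be promoted to transitivity of the whole group $\langle\alpha_1,\dots,\alpha_k,\omega\rangle$ without spoiling the square structure $\alpha_1\cdots\alpha_k=\omega^{-2}$, and, in the even case, that a fixed point of $\omega$ can always be retained. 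Once these hold, Proposition~\ref{2.1-EKS} returns the covering and the construction makes $M$ nonorientable, which settles both the equivalence and the final clause.
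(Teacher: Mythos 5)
First, a point of reference: the paper itself contains no proof of this statement. Theorem~\ref{realization} is quoted verbatim from \cite{EKS} (their Theorem~5.1) and stated without argument, so the only benchmark is the original EKS proof, whose overall shape your sketch does reproduce: the translation through Proposition~\ref{2.1-EKS}, the parity of $\nu(\mathscr{D})$ from $\operatorname{sgn}(\omega^{-2})=1$, the bound $\nu(\mathscr{D})\geq d-1$ from (\ref{rhf}) together with $\chi(M)\leq 1$ for nonorientable $M$, and the orientation-character sign criterion for orientability of the cover. Your necessity half is complete and correct, and your treatment of the exceptional value $\nu=d-2$ --- forced $M=S^{2}$, possible only for $d$ even and $M$ orientable --- agrees exactly with the remark the paper makes immediately after the theorem, so you have read the ``Moreover'' clause correctly as load-bearing.

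The genuine gap is the sufficiency half. You reduce everything to the assertion that the fixed permutation $\omega^{-2}$ (a $d$-cycle for $d$ odd; a $(d-1)$-cycle with one fixed point for $d$ even) can be written as $\alpha_{1}\cdots\alpha_{k}$ with $\alpha_{i}\in D_{i}$ and with the resulting group transitive, and this assertion is never established: the proposed ``induction on $k$ and on the excess $\nu(\mathscr{D})-(d-1)$ using splitting and merging moves'' is precisely the technical content of the EKS construction, and announcing the plan is not carrying it out. Moreover, the sufficient condition you invoke, $\nu(\mathscr{D})\geq d-1\geq\nu(\omega^{-2})$, is not the correct one once transitivity of $\langle\alpha_{1},\dots,\alpha_{k}\rangle$ on all $d$ points is demanded: if $\omega^{-2}$ has $c$ cycles, the Euler-characteristic count for a connected cover of the disk requires $\nu(\mathscr{D})\geq d-2+c$ together with $\nu(\mathscr{D})\equiv\nu(\omega^{-2})\pmod{2}$. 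In your even case $c=2$, so the true requirement is $\nu(\mathscr{D})\geq d$ --- which fortunately holds by your own parity upgrade, but the argument must say so explicitly, and the induction must preserve this count, the parity, and the fixed point of $\omega$ simultaneously. Until that factorization lemma is proved (or explicitly cited as a known product-of-conjugacy-classes result, as \cite{EKS} in effect supplies), what you have is the right strategy but an incomplete proof.
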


\begin{Cor}\label{real-min}  Every collection of  partitions of $d$ with  $ \nu(\mathscr{D})=d-1$ and $d$  odd is realizable by a primitive   branched covering over $\mathbb{R}P^2$. \qed
\end{Cor}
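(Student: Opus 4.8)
The plan is to get realizability for free from Theorem \ref{realization} and then argue that, when $d$ is odd, \emph{every} such realization is automatically primitive, so that no extra construction is needed.

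First I would check the hypotheses of Theorem \ref{realization}. Since $d$ is odd, $\nu(\mathscr{D})=d-1$ is even, so $\nu(\mathscr{D})\equiv 0 \pmod 2$; and trivially $d-1\leq \nu(\mathscr{D})=d-1$. Thus \eqref{hcpp} holds and Theorem \ref{realization} yields a branched covering $\phi\colon M\rightarrow \mathbb{R}P^{2}$ of degree $d$ with $M$ closed and connected and with branch datum $\mathscr{D}$. Feeding this into the Riemann--Hurwitz formula \eqref{rhf} with $\chi(\mathbb{R}P^{2})=1$ gives $\chi(M)=d\,\chi(\mathbb{R}P^{2})-\nu(\mathscr{D})=d-(d-1)=1$, so $M=\mathbb{R}P^{2}$; this is precisely the announced minimal-defect situation in which the covering surface is again the projective plane.

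It remains to upgrade ``branched covering'' to ``primitive branched covering'', i.e.\ to show that $\phi_{*}\colon \pi_{1}(M)\rightarrow \pi_{1}(\mathbb{R}P^{2})$ is surjective. I would argue by contradiction. Since $\pi_{1}(\mathbb{R}P^{2})\cong \mathbb{Z}/2$ has only the trivial proper subgroup, non-primitivity forces $\phi_{*}$ to be trivial; then $\phi_{*}(\pi_{1}(M))\subseteq p_{*}(\pi_{1}(S^{2}))=\{1\}$, so by the lifting criterion $\phi$ lifts through the double cover $p\colon S^{2}\rightarrow \mathbb{R}P^{2}$ to a continuous $\tilde{\phi}\colon M\rightarrow S^{2}$ with $p\circ\tilde{\phi}=\phi$. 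As $p$ is a local homeomorphism, $\tilde{\phi}$ is again a branched covering, and counting sheets over a regular value gives $d=\deg\phi=\deg p\cdot \deg\tilde{\phi}=2\deg\tilde{\phi}$, which is impossible since $d$ is odd. Hence $\phi$ is primitive. (Equivalently one can note directly that no branched covering $\mathbb{R}P^{2}\rightarrow S^{2}$ exists, since by \eqref{rhf} it would have odd total defect $2e-\chi(\mathbb{R}P^{2})=2e-1$, contradicting $\nu\equiv 0 \pmod 2$.)

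There is no genuinely hard step here: the entire content of the statement is supplied by Theorem \ref{realization}, and the passage to primitivity is \emph{forced} by the parity of $d$. The only point deserving a little care is the lifting argument — verifying that a degree-$d$ branched covering acting trivially on $\pi_{1}$ really does factor through $p\colon S^{2}\rightarrow \mathbb{R}P^{2}$ as a branched covering — and it is exactly there that the oddness of $d$ (or, alternatively, the even-defect constraint together with $\chi(M)=1$) produces the contradiction.
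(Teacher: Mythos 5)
Your proposal is correct and follows exactly the route the paper intends: the paper states this corollary without proof as an immediate consequence of Theorem \ref{realization}, the hidden content being precisely what you spell out --- the parity check $d-1\equiv 0\pmod 2$, the Riemann--Hurwitz computation forcing $M=\mathbb{R}P^2$, and the observation that a non-primitive branched covering over $\mathbb{R}P^2$ would factor through the double cover $S^2\to\mathbb{R}P^2$ and hence have even degree, which is impossible for $d$ odd. Your lifting-criterion argument is just this factorization made explicit, so the two proofs coincide.
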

%\textcolor{blue}{N: a seguir (deixo em verde) analizamos o caso $M=S^2$ e mostramos que o Teorema 1 nao considera esse caso, e o EKS nao deixa isso explicito. MAS nesse caso o recobrimento nao \'e sobrejetor no $\pi_1$. 
%Em principio acharia que podemos tirar para ganhar algumas linhas, mas se queremos comparar com o trabalho de EKS  a observa\c c\~ao pode ser  interessante!}\\

%\textcolor{green}{
\noindent We observe that Theorem \ref{realization} does not hold when
$M=S^2$. In fact given any branched covering $\phi:S^2\rightarrow \mathbb{R}P^{2}$ of degree $d$, by the Riemann-Hurwitz formula, we  have $\nu(\mathscr{D})=d-2$ and does not satisfies the condition $(3)$. There are branched coverings having as base   $\mathbb{R}P^{2}$  and total space $S^2$. For example, take   the universal  covering 
$p: S^2\to  \mathbb{R}P^{2}$. We also have  branched coverings which are not covering. For example consider a 
branched covering $\phi: S^2\to S^2$, which is not the identity, then the composite $p\circ \phi:S^2 \to \mathbb{R}P^2$
is a branched covering which is not a covering. In fact, it is easy to see that all branched covering  $S^2 \to \mathbb{R}P^2$ are  obtained as described above.

 \begin{definition}
A collection of partitions $\mathscr{D}$ of $d$ satisfying $(\ref{hcpp})$ will be called {\it admissible datum on $\mathbb{R}P^2$}.
\end{definition}

\section{Characterization of decomposable minimal defect data on $\mathbb{R}P^2$ }\label{decomp}
Decomposable data on $\mathbb{R}P^2$ are the admissible data 
 realizable by 
 decomposable primitive branched coverings over $\mathbb{R}P^2$.  
  
 \begin{Prop}\label{yo}
A primitive branched covering on a closed surface $N$
 is decomposable if and only if its monodromy
group is imprimitive. 
\end{Prop}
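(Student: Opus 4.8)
The plan is to transfer the statement to the unbranched part of the covering and to invoke the classical correspondence between intermediate coverings and invariant partitions of the fibre. Write $N_0:=N\setminus B_\phi$ and $M_0:=M\setminus\phi^{-1}(B_\phi)$, so that $\phi_0:=\phi|_{M_0}\colon M_0\to N_0$ is an ordinary covering of degree $d$ whose monodromy $\rho_\phi\colon\pi_1(N_0,z)\to\Sigma_d$ has image the monodromy group $G$ acting on the fibre $\Omega:=\phi^{-1}(z)=\{z_1,\dots,z_d\}$. Since $M$ is connected $G$ is transitive (a fortiori under primitivity of $\phi$). I will use throughout that, for $x=z_1$, the point stabiliser $G_x$ corresponds to $\phi_{0*}\pi_1(M_0,z_1)$, and that blocks of $G$ through $x$ are in bijection with the intermediate subgroups $G_x\leq H\leq G$ via $\Lambda\mapsto\{g\in G:\Lambda^{g}=\Lambda\}$.

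For the implication ``imprimitive $\Rightarrow$ decomposable'' I would start from a nontrivial block system $\Gamma=\{\Lambda_1,\dots,\Lambda_m\}$ with $|\Lambda_j|=k$, so that $1<k<d$ and $mk=d$, and let $H$ be the setwise stabiliser of the block containing $x$, giving $G_x\subsetneq H\subsetneq G$. The transitive $G$-action on $\Gamma$ is the monodromy of a connected intermediate covering $\psi_0\colon L_0\to N_0$ of degree $m$, together with a covering $\eta_0\colon M_0\to L_0$ of degree $k$ with $\psi_0\circ\eta_0=\phi_0$. The essential step is then to \emph{complete} this pair over the branch locus: around each $b\in B_\phi$ the restriction of $\phi_0$ to a small punctured disc is a finite disjoint union of punctured discs, and the same holds for $\psi_0$ and $\eta_0$; filling in every puncture with a disc (Fox completion) yields a closed surface $L\supset L_0$ and extensions $\psi\colon L\to N$, $\eta\colon M\to L$ that are branched coverings with $\phi=\psi\circ\eta$. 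Since $\deg\eta=k>1$ and $\deg\psi=m>1$, both factors are nontrivial, so $\phi$ is decomposable.

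For the converse ``decomposable $\Rightarrow$ imprimitive'', suppose $\phi=\psi\circ\eta$ with $\eta\colon M\to L$ and $\psi\colon L\to N$ branched coverings of degrees $k>1$ and $m>1$. Choosing $z\in N_0$, no point of $\psi^{-1}(z)=\{w_1,\dots,w_m\}$ is a branch point of $\eta$, so the sets $\Lambda_j:=\eta^{-1}(w_j)\cap\Omega$ partition $\Omega$ into $m$ pieces of size $k$. Uniqueness of path lifting in the unbranched region shows that $\eta$ intertwines the two monodromies, i.e. $\eta(x^{\rho_\phi(\gamma)})=\eta(x)^{\rho_\psi(\gamma)}$ for every loop $\gamma$; hence $\Lambda_j^{\rho_\phi(\gamma)}=\eta^{-1}(w_j^{\rho_\psi(\gamma)})$ is again some $\Lambda_{j'}$, so $\{\Lambda_1,\dots,\Lambda_m\}$ is a block system for $G$. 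As $1<k<d$ these blocks are nontrivial, and $G$ is imprimitive.

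The main obstacle is the completion step in the first implication: one must verify that the intermediate covering $L_0\to N_0$ extends to a branched covering of the \emph{closed} surface $L$ and that the extended maps remain open, with totally disconnected fibres and discrete branch set. This is precisely the Fox completion of a finite covering of a punctured surface, and the decisive local computation is that over each punctured disc the intermediate covering is again a finite union of punctured discs, so that the punctures can be filled compatibly in $M$, $L$ and $N$. Everything else reduces to the transitivity of $G$ (which makes $L_0$ connected) and the elementary bijection between block systems and intermediate subgroups $G_x\leq H\leq G$.
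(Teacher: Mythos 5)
Your proof is correct and takes essentially the same route as the paper: the paper gives no argument of its own, merely citing Proposition 1.8 of \cite{BN&GDL1} (proved there for $\chi(N)\leq 0$) and observing that the same proof works for any closed surface $N$. That proof is exactly the correspondence you spell out --- nontrivial block systems $\leftrightarrow$ intermediate subgroups $G_x \leq H \leq G$ $\leftrightarrow$ intermediate coverings of $N_0$, completed over the branch locus by Fox completion in one direction, and the monodromy-intertwining $\eta(x^{\rho_{\phi}(\gamma)})=\eta(x)^{\rho_{\psi}(\gamma)}$ producing the blocks $\eta^{-1}(w_j)$ in the other.
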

\begin{proof} This result is sated in \cite{BN&GDL1} (Proposition 1.8) in the context $\chi(N)\leq 0$. But the same proof  applies for any closed surface $N$.
\end{proof}

In the following we give some general definitions and results needed to prove a version of  Proposition 2.6 \cite{BN&GDL1} (see Introduction) for the  case we are studying. 
  \begin{definition}[\S 2, \cite{BN&GDL1}]\label{Pp} Let  $D=[d_1,\dots,d_t]$ be a partition of a non-prime integer  $d \in \mathbb{N}$. We will say that $D$ is a product partition if  for a non-trivial factorization $d=uw$ of $d$  there exist a positive integer  $s$, a partition
  $U=[u_{1},\dots,u_{s}]$ of $u$ and a collection of partitions $\mathscr{W}=\{W_{1},\dots,W_{s}\}$ of $w$ such that 
  \begin{equation}\label{pp}
  D=[u_1.W_1, \dots, u_s.W_s],
  \end{equation}
   where $u_i.W_i$ denotes the partition of $u_i.w$ obtained
  by multiplying each component of $W_{i}$ by 
$u_{i}$, for $i=1,\dots,s.$ We write $D=U.\mathscr{W}.$
\end{definition}
\noindent From (\ref{pp}),  a product partition of $d$ is a union of partitions of multiples of a proper divisor of $d$.

  \begin{Ex}\label{4}
  For $u=w=3$, the partition
 $[2,2,2,1,1,1]$ could be expressed as  a product partition  in the following ways:  \\either as $\bigl[\mathit{1}[2,1],\mathit{1}[2,1],\mathit{1}[2,1]\bigr]$ with $U=[\mathit{1,1,1}]$ and $\mathscr{W}=\{[2,1],[2,1],[2,1]\}$, or as $\bigl[\mathit{2}[1,1,1],\mathit{1}[1,1,1]\bigr]$ with $U=[\mathit{2,1}]$ and $\mathscr{W}=\{[1,1,1],[1,1,1]\}$.
  \end{Ex}
\begin{definition} \label{fatorization} 
  A  collection $\mathscr{D}$ of partitions   of $d$   admits an algebraic factorization if we can write it as an union of product partitions associated to  the same non-trivial factorization of $d$. Especifically, if for  $d=uw$
  there exist $t \in \mathbb{N}$, $\mathscr{U}=\{U_{1},\dots,U_{t}\}$ a collection of partitions of $u$
and $\mathscr{W}=\bigcup_{i=1}^{t}\mathscr{W}_{i}$ a union of collection of partitions of $w$  such that  $\mathscr{D} =\{U_{i}.\mathscr{W}_{i}\}_{i=1}^{t}$. We will write $\mathscr{D}=\mathscr{U}.\mathscr{W}$ and call $\mathscr{U}$ and $\mathscr{W}$ as ``the first factor" and ``the second factor", respectively.
\end{definition}

\begin{Prop}[Proposition 2.4, \cite{BN&GDL1}]\label{fator}
For $d=uw$ let $\mathscr{D}, \mathscr{U}, \mathscr{W}$ 
be   collections of partitions of  $d,\, u$, $w$ respectively such that 
$\mathscr{D}=\mathscr{U}.\mathscr{W}$. Then
$\nu(\mathscr{D})=\nu(\mathscr{W})+w \nu(\mathscr{U})$. \qed
\end{Prop}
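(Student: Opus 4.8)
The plan is to reduce everything to the elementary identity $\nu(P)=n-m$, valid for any partition $P=[p_1,\dots,p_m]$ of $n$ into $m$ parts (counting the trivial parts), which is immediate from the definition $\nu(P)=\sum_{j}(p_j-1)=\bigl(\sum_j p_j\bigr)-m$. Since $\nu$ is additive over collections, in the decomposition $\mathscr{D}=\{U_i.\mathscr{W}_i\}_{i=1}^{t}$ we have $\nu(\mathscr{D})=\sum_{i=1}^{t}\nu(U_i.\mathscr{W}_i)$, while on the right-hand side $\nu(\mathscr{U})=\sum_{i}\nu(U_i)$ and $\nu(\mathscr{W})=\sum_{i}\nu(\mathscr{W}_i)$, the latter because $\mathscr{W}=\bigcup_i\mathscr{W}_i$ is a disjoint union of collections. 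Hence it suffices to establish the formula for a single product partition, i.e. to show that $\nu(U.\mathscr{W})=w\,\nu(U)+\nu(\mathscr{W})$ whenever $U=[u_1,\dots,u_s]$ is a partition of $u$ and $\mathscr{W}=\{W_1,\dots,W_s\}$ is a collection of partitions of $w$, and then to sum over $i=1,\dots,t$.

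First I would analyse a single block $u_i.W_i$. Writing $W_i=[w_{i1},\dots,w_{ir_i}]$ with $r_i$ parts, multiplying every component by $u_i$ produces the partition $u_i.W_i=[u_iw_{i1},\dots,u_iw_{ir_i}]$ of $u_iw$ having exactly the same number $r_i$ of parts. The crucial observation is therefore that the passage $W_i\mapsto u_i.W_i$ rescales the total from $w$ to $u_iw$ while leaving the part-count unchanged. Applying $\nu(P)=n-m$ on each side then gives $r_i=w-\nu(W_i)$ and hence $\nu(u_i.W_i)=u_iw-r_i=u_iw-w+\nu(W_i)=w(u_i-1)+\nu(W_i)$.

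Summing this over the blocks $i=1,\dots,s$ yields $\nu(U.\mathscr{W})=\sum_{i=1}^{s}\bigl[w(u_i-1)+\nu(W_i)\bigr]=w\sum_{i=1}^{s}(u_i-1)+\sum_{i=1}^{s}\nu(W_i)=w\,\nu(U)+\nu(\mathscr{W})$, where the last equality uses $\nu(U)=\sum_{i}(u_i-1)=u-s$ because $U$ is a partition of $u$. Feeding this single-product-partition identity back into the additive reduction of the first paragraph, and summing over $i=1,\dots,t$, completes the proof.

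I do not expect any genuine obstacle here: the statement is purely combinatorial and amounts to careful bookkeeping of the number of parts. The only point requiring a moment's care is recognising that scaling each part of $W_i$ by $u_i$ preserves the part-count while multiplying the total by $u_i$ --- this is precisely what produces the coefficient $w$ in the $w\,\nu(U)$ term rather than some other combination --- together with keeping the two levels of additivity cleanly separated (over the blocks $i$ within one product partition, and over the product partitions making up the collection $\mathscr{D}$).
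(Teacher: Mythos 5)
Your proof is correct. Note that the paper itself does not prove this proposition: it quotes it as Proposition 2.4 of \cite{BN&GDL1} and marks it \qed, so there is no in-paper argument to compare against. Your computation is the natural one and is complete: the reduction via additivity of $\nu$ over the pieces of the factorization, the key identity $\nu(u_i.W_i)=u_iw-r_i=w(u_i-1)+\nu(W_i)$ (valid because scaling every part of $W_i$ by $u_i$ preserves the number of parts $r_i$ while rescaling the total from $w$ to $u_iw$), and the final summation over $i$ correctly yield $\nu(\mathscr{D})=\nu(\mathscr{W})+w\,\nu(\mathscr{U})$; one can sanity-check it on the paper's Example 2, where $[2,2,2,1,1,1]$ with $U=[2,1]$, $\mathscr{W}=\{[1,1,1],[1,1,1]\}$ gives $3=0+3\cdot 1$.
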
 
%\begin{proof}
%Let $\mathscr{D}=\mathscr{U}.\mathscr{W}$ be an algebraic factorization of $\mathscr{D}$. Then there are positive integers $t$
%and $s_{i}$, for $i=1,\dots,t$, such that $\mathscr{U}=\{U_{1},\dots,U_{t}\}$ where $U_{i}$ is a partition of $u $ with $s_{i}$ summands, and 
%$\mathscr{W}=\cup_{i=1}^{t}\mathscr{W}_{i}$ where $\mathscr{W}_{i}$ is a collection of   $s_{i}$ partitions of  $w$. Then $\mathscr{D}=\{U_{i}.\mathscr{W}_{i}\}_{i=1}^{t}$ and
%$\nu(\mathscr{D})=\sum_{i=1}^{t}\nu(U_{i}.\mathscr{W}_{i})=\sum_{i=1}^{t}(uw+\nu(\mathscr{W}_{i})-ws_{i})=\sum_{i=1}^{t}\nu(\mathscr{W}_{i})+\sum_{i=1}^{t}w(u-s_{i})=\nu(\mathscr{W})+w
%\sum_{i=1}^{t}\nu(U_{i})=\nu(\mathscr{W})+w \nu(\mathscr{U})$.
%\end{proof}

\noindent Next proposition  characterize the  decomposable minimal defect data on $\mathbb{R}P^2$.

\begin{Prop}\label{nt-minimal}  An admissible datum $\mathscr{D}$ with minimal defect  on $\mathbb{R}P^2$   is decomposable if, and only if
  there exists an algebraic factorization of
 $\mathscr{D}$ such that its first factor 
is  a non-trivial admissible datum on $\mathbb{R}P^2$ with minimal defect.
\end{Prop}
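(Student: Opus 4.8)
The plan is to prove both implications using Proposition \ref{yo}, which translates decomposability into imprimitivity of the monodromy group, together with the algebraic factorization machinery and the defect formula of Proposition \ref{fator}. Since $\mathscr{D}$ has minimal defect on $\mathbb{R}P^2$ we have $\nu(\mathscr{D}) = d-1$ with $d$ odd, and by Corollary \ref{real-min} it is always realizable by some primitive covering; decomposability is the extra condition to characterize.

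For the **forward direction**, suppose $\mathscr{D}$ is decomposable. Then there is a decomposable primitive branched covering $\phi$ realizing $\mathscr{D}$, which factors as $\phi = \psi \circ \eta$ with both intermediate degrees bigger than $1$; writing $d = uw$ for the corresponding intermediate degree $w$ (so $\eta$ has degree $w$ and $\psi$ has degree $u$), I would read off the branch data of the two factors. The key structural fact is that such a geometric factorization induces exactly an algebraic factorization $\mathscr{D} = \mathscr{U}.\mathscr{W}$ in the sense of Definition \ref{fatorization}, where $\mathscr{U}$ is the branch datum of the ``outer'' covering $\psi: M' \to \mathbb{R}P^2$ of degree $u$. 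I must then check that this first factor $\mathscr{U}$ is itself a \emph{non-trivial admissible datum on $\mathbb{R}P^2$ with minimal defect}. Admissibility (that $\nu(\mathscr{U}) \geq u-1$) follows because $\mathscr{U}$ is a genuine branch datum over $\mathbb{R}P^2$; minimality is where the arithmetic bites: applying Proposition \ref{fator} gives $\nu(\mathscr{D}) = \nu(\mathscr{W}) + w\,\nu(\mathscr{U})$, so $d-1 = \nu(\mathscr{W}) + w\,\nu(\mathscr{U})$. Since $\nu(\mathscr{W}) \geq 0$ and $\nu(\mathscr{U}) \geq u-1$ (the minimal-defect bound for the factor over $\mathbb{R}P^2$), I get $d - 1 \geq w(u-1) = d - w$, forcing $\nu(\mathscr{W})$ and $\nu(\mathscr{U})$ to their minimal possible values, i.e. $\nu(\mathscr{U}) = u-1$ and $\nu(\mathscr{W}) = w-1$. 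This pins down both factors as minimal-defect data and shows $\mathscr{U}$ is non-trivial (as $u > 1$).

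For the **converse**, suppose such an algebraic factorization $\mathscr{D} = \mathscr{U}.\mathscr{W}$ exists with $\mathscr{U}$ a non-trivial minimal-defect admissible datum on $\mathbb{R}P^2$. I would realize each factor separately over $\mathbb{R}P^2$ using Corollary \ref{real-min} (so $\mathscr{U}$ is realized by a primitive covering of degree $u$ and the pieces of $\mathscr{W}$ over $\mathbb{R}P^2$ and over the disk, as appropriate), then compose them geometrically into a branched covering of degree $d = uw$ realizing $\mathscr{D}$; the composite is manifestly decomposable. The care needed is that the composite lands in the primitive class: by Proposition \ref{yo} it suffices that the monodromy be imprimitive and transitive, and the block structure coming from the fibers of the degree-$w$ intermediate map supplies exactly the nontrivial block system witnessing imprimitivity, while transitivity is guaranteed by realizing connected coverings at each stage.

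**The main obstacle** I anticipate is the precise bookkeeping in the forward direction: extracting from an arbitrary geometric decomposition $\phi = \psi \circ \eta$ an algebraic factorization whose first factor is literally a datum \emph{on $\mathbb{R}P^2$} rather than over some other intermediate surface $M'$. A priori $M'$ could be $S^2$ or a higher-genus/nonorientable surface, and the definition demands the first factor be admissible on $\mathbb{R}P^2$ specifically. The resolution should come from the minimality constraint $\nu(\mathscr{D}) = d-1$ together with the Riemann–Hurwitz formula (\ref{rhf}) applied to both stages: the intermediate surface is forced to also be $\mathbb{R}P^2$ (this is precisely the ``minimal defect means the covering surface is again $\mathbb{R}P^2$'' phenomenon noted in the introduction), so that $\chi(M') = 1$ and $\mathscr{U}$ really is a minimal-defect datum on $\mathbb{R}P^2$. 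Verifying that this surface-level rigidity holds — that no orientable intermediate surface can occur under the minimal-defect hypothesis — is the crux, and I expect it to follow from the parity/defect constraints of Theorem \ref{realization} applied at each level of the tower.
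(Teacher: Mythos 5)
Your proposal follows the paper's own proof almost step for step. In the forward direction you do exactly what the paper does: force the intermediate surface $K$ to be $\mathbb{R}P^2$ via Riemann--Hurwitz plus the admissibility constraint (the paper's one-line computation $0 \leq w\chi(K)-1 \equiv 0 \pmod{2}$, which kills $S^2$ by parity and every surface with $\chi \leq 0$ by positivity), then read the algebraic factorization off the local degrees. Your arithmetic for minimality is a slight variant: the paper gets $\nu(\widetilde{\mathscr{U}})=u-1$ directly from Riemann--Hurwitz applied to $\eta:\mathbb{R}P^2 \to \mathbb{R}P^2$, whereas you combine admissibility of $\mathscr{U}$ with Proposition \ref{fator}; note that your displayed inequality $d-1 \geq w(u-1)$ by itself forces nothing --- the actual forcing is that $\nu(\mathscr{U})\geq u$ would give $w\nu(\mathscr{U}) \geq d$ and hence $\nu(\mathscr{W})<0$ --- but this is a trivially repairable slip, not a conceptual one.

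The one step that is genuinely wrong is your justification, in the converse, that the composite ``lands in the primitive class.'' You claim that ``by Proposition \ref{yo} it suffices that the monodromy be imprimitive and transitive.'' This reads Proposition \ref{yo} backwards: that proposition \emph{presupposes} the covering is primitive (surjective on $\pi_1$) and then characterizes its decomposability by imprimitivity of the monodromy. Transitivity plus imprimitivity of the monodromy does not imply $\pi_1$-surjectivity of the covering --- any non-primitive branched covering with connected covering surface has transitive monodromy and, being decomposable, its monodromy even admits nontrivial blocks, yet it is not primitive. So as written, primitivity of the composite is never established, and it is needed, since ``decomposable data'' means realizable by a decomposable \emph{primitive} covering. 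The repair is one line and is what the paper implicitly uses: Corollary \ref{real-min} lets you choose both factors $\eta$ and $\psi$ primitive, and a composition of maps surjective on $\pi_1$ is surjective on $\pi_1$; decomposability of the composite is then manifest from the construction, with no appeal to Proposition \ref{yo} at all. With that fix (and with the branch-point bookkeeping of placing each partition $\mathscr{W}_{x,j}$ at the point $y_{x,j}$ of the fiber $\eta^{-1}(x)$, which you gloss over but which is routine), your argument coincides with the paper's.
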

\begin{proof}
Suppose that $(\mathbb{R}P^2,\phi,\mathbb{R}P^2,B_{\phi},d)$ is a decomposable primitive branched
covering realizing 
$\mathscr{D}$, with $\nu(\mathscr{D})=d-1$. Then  there exist a surface $K$ and coverings 
 $\psi: \mathbb{R}P^2 \rightarrow K$ and $\eta: K \rightarrow \mathbb{R}P^2$,  of degrees $w,u>1$ respectively, such that $\phi=\eta \circ \psi$ and  $d=uw$. From (\ref{rhf}) and (\ref{hcpp}), necessarily  $0 \leq w\chi(K) -1 \equiv 0 \pmod{2}$ then  $K=\mathbb{R}P^2$. Hence
 there is a
 non-empty subset  $B_{\eta}\subset B_{\phi}$ such that 
$(\mathbb{R}P^2,\eta,\mathbb{R}P^2,B_{\eta},u)$ is a primitive branched covering with branch datum
$\mathscr{\widetilde{U}}$
  and minimal defect $\nu(\mathscr{\widetilde{U}})=u-1$,  from (\ref{rhf}).\\
   Note that each
 $x \in B_{\phi}$ determines a partition  of  $u$ (that will be trivial if
$x \in B_{\phi} \setminus B_{\eta}$) and each point in  $\eta^{-1}(x)$ determines a
partition of $w$ (that will be trivial if such a point is not a branch point of
$\psi$). In other words,  $x \in B_{\phi}$ determines a partition 
$U_{x}$ of $u$ and a collection 
$\mathscr{W}_{x}$ of partitions of $w$, such that
$U_{x}.\mathscr{W}_{x}$ is the partition of  $d$ 
that $x$ determines for $\phi$. 
Then $\mathscr{D}=\{U_{x}.\mathscr{W}_{x}\}_{x \in B_{\phi}}$ is
a factorization with an admissible non-trivial first factor $\mathscr{U}=\{U_{x}\}_{x \in B_{\phi}}$ on $\mathbb{R}P^2$ with minimal defect, because $\nu(\mathscr{U})=\nu(\mathscr{\widetilde{U}})$
(the difference between $\mathscr{U}$ and $\widetilde{\mathscr{U}}$ are
just the trivial partitions) and
$B_{\eta} \neq \emptyset$.
\\
Conversely, suppose $d=uw$ and let $\mathscr{D}=\{U_{x}.\mathscr{W}_{x}\}_{x \in B}$ be a factorization of an  admissible datum on $\mathbb{R}P^2$ with minimal defect, whose first 
factor $\mathscr{U}$ is non-trivial and  admissible on $\mathbb{R}P^2$ with minimal defect $u-1$.
By Corollary \ref{real-min} there exists a primitive branched covering
 $(\mathbb{R}P^2,\eta,\mathbb{R}P^2,B_{\eta},u)$ realizing 
$\mathscr{U}$. We  suppose $B_{\eta} \subset B$  and $U_x$ as a trivial partition for  each $x\in B\backslash B_\eta$. By
 Proposition \ref{fator} the second factor $\mathscr{W}$, 
 is admissible on $\mathbb{R}P^2$  with minimal defect $w-1$. Then it is not trivial and
 by Corollary \ref{real-min} there exists 
a primitive branched covering  
 $(\mathbb{R}P^2,\psi,\mathbb{R}P^2,B_{\psi},w)$ realizing it
  as branch datum. Without loss of generality 
we can assume that  $B_{\psi}\subset \eta^{-1}(B)$ and, for each $x\in \eta(B_{\psi})$, we have   $U_x=[u_{x,1},\dots,u_{x,s_x}]$ a partition of $u$, and  
  $\eta^{-1}(x)=\{y_{x,1}, \dots,y_{x,s_x}\}$ such that   $\eta$ has local degree $u_{x,j}$ at $y_{x,j}$. Moreover,  $\psi$ and the point $y_{x,j}$ determine     
   the partition  $\mathscr{W}_{x,j}$ of  $w$, for  $1\leq  j \leq s_x$.  Thus 
$(\mathbb{R}P^2,\eta \psi, \mathbb{R}P^2, B_{\eta}  \cup \eta(B_{\psi}),$ $d=uw)$ is a decomposable primitive branched covering with branch datum
 $\mathscr{D}$. 
\end{proof}
\begin{Cor}\label{U1} Let $\mathscr{D} =\{U_{1}.\mathscr{W}_{1},U_{2}.\mathscr{W}_{2}\}$ be an algebraic factorization of  an admissible datum on $\mathbb{R}P^2$ such that $\nu(\mathscr{D})=d-1$. If   $\mathscr{U}=\{U_1,U_2\}$ is  admissible on $\mathbb{R}P^2$ and 
   $U_2=[1,\dots,1]$ then $U_1=[u].$
\end{Cor}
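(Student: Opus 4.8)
The plan is to prove the claim purely by comparing total defects, using the admissibility condition $(\ref{hcpp})$ for the collection $\mathscr{U}$. The whole argument is a two-sided pinch on $\nu(U_1)$: the hypothesis that $\mathscr{U}$ is admissible forces $\nu(U_1)$ to be large, while the combinatorics of partitions forces it to be small, and the two bounds coincide only at $U_1=[u]$.

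First I would use that $U_2=[1,\dots,1]$ is the trivial partition of $u$, so $\nu(U_2)=0$; by additivity of the total defect over a collection this gives $\nu(\mathscr{U})=\nu(U_1)+\nu(U_2)=\nu(U_1)$. Next, since $\mathscr{U}=\{U_1,U_2\}$ is by hypothesis an admissible datum on $\mathbb{R}P^2$ (a collection of partitions of $u$), condition $(\ref{hcpp})$ yields $\nu(\mathscr{U})\geq u-1$, hence $\nu(U_1)\geq u-1$. On the other hand, writing $U_1=[u_1,\dots,u_s]$ with $\sum_{i} u_i=u$ and $s\geq 1$, the definition of the defect gives $\nu(U_1)=\sum_{i}(u_i-1)=u-s\leq u-1$. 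Combining the two inequalities forces $\nu(U_1)=u-1$, i.e. $s=1$, which is precisely $U_1=[u]$.

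I do not expect a genuine obstacle here: once the triviality of $U_2$ collapses $\nu(\mathscr{U})$ onto $\nu(U_1)$, the conclusion is immediate from the fact that the defect of a partition of $u$ is maximised, at the value $u-1$, exactly by the one-part partition $[u]$. It is worth noting that only the admissibility of $\mathscr{U}$ and the triviality of $U_2$ are actually used; neither the minimal-defect hypothesis $\nu(\mathscr{D})=d-1$ nor the second factors $\mathscr{W}_1,\mathscr{W}_2$ enter the argument, although Proposition $\ref{fator}$ would give an equivalent route via $\nu(\mathscr{D})=\nu(\mathscr{W})+w\,\nu(\mathscr{U})$ should one prefer to keep the full datum $\mathscr{D}$ in play.
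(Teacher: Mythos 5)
Your proof is correct, and it is genuinely different from the paper's. The paper argues geometrically: it applies Proposition \ref{nt-minimal} to obtain a decomposable realization of $\mathscr{D}$, then uses the Riemann--Hurwitz formula (\ref{rhf}) together with $\nu(\mathscr{D})=d-1$ to see that the realization is a map $\mathbb{R}P^2\rightarrow\mathbb{R}P^2$ written as a composition of two maps of the same type, one of which realizes $\mathscr{U}$ with minimal defect $\nu(\mathscr{U})=u-1$; since $\nu(U_2)=0$ this gives $\nu(U_1)=u-1$, hence $U_1=[u]$. You replace this geometric detour by the two-sided bound $u-1\leq\nu(\mathscr{U})=\nu(U_1)=u-s\leq u-1$, where $s$ is the number of parts of $U_1$: the lower bound is the admissibility condition (\ref{hcpp}) for $\mathscr{U}$, the upper bound is the elementary estimate on the defect of a partition of $u$, and equality forces $s=1$. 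Your version is more elementary and proves a formally stronger statement, since neither $\nu(\mathscr{D})=d-1$ nor the second factors $\mathscr{W}_1,\mathscr{W}_2$ are used. It also quietly handles a point the paper glosses over: the ``if'' direction of Proposition \ref{nt-minimal} requires the first factor to be admissible \emph{with minimal defect}, whereas the corollary only hypothesizes admissibility; the minimality of $\nu(\mathscr{U})$ is exactly what your pinch (or, equivalently, Proposition \ref{fator} combined with $\nu(\mathscr{W})\geq 0$) establishes, so the paper's opening appeal to Proposition \ref{nt-minimal} is, strictly speaking, only licensed after a computation like yours. What the paper's route buys instead is the geometric interpretation --- it exhibits $U_1=[u]$ as the branch datum of the degree-$u$ factor $\mathbb{R}P^2\rightarrow\mathbb{R}P^2$ --- which is the form in which the corollary is exploited later, e.g.\ in the proof of Lemma \ref{C123}.
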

\begin{proof}
From Proposition \ref{nt-minimal}, 
 $\mathscr{D}$ is realizable by a decomposable branched covering.  Since $\nu(D_1)+\nu(D_2)=d-1$, then (\ref{rhf}) implies that  the map is of the form  $\mathbb{R}P^2\rightarrow\mathbb{R}P^2$  and it is a composition of a pair of maps of the same type.  One of them realizing $\mathscr{U}$ as branch datum, with  $\nu(\mathscr{U})=\nu(U_1)+ \nu(U_2)=u-1$ and $ \nu(U_2)=0$. Hence $\nu(U_1)=u-1$ and  $U_1=[u]$.
  \end{proof}

\begin{Rm}\label{blocos} From Proposition \ref{yo} and Proposition  \ref{nt-minimal} 
there is a relation  between a decomposable realization $(M, \phi, N, B,d)$ of $\mathscr{D}$ and an algebraic factorization of $\mathscr{D}$ with  admissible first factor. In fact, an algebraic factorization $\mathscr{D}=\{U_{x}.\mathscr{W}_{x}\}_{x \in B}$ provides a factorization $d=uw$ and  describes the structure of  a system of  blocks  for  the group defined by the Hurwitz's representation 
 $\rho_{\phi}$ (see (\ref{Hr}) and Proposition \ref{2.1-EKS}) associated to the realization. We illustrate this with an  example:
 
  \vspace{3pt}
\noindent Consider $d=9$ and $ \mathscr{D}=\{[6,1,1,1],[2,2,2,1,1,1] \}.$
 A realization of $\mathscr{D}$ is characterized by the definition of a transitive permutation group $G:=\langle \alpha, \beta, \gamma | \alpha \beta=\gamma^{-2} \rangle$  such that  $ \alpha \in  D_1=[6,1,1,1]$ and  $\beta \in  D_2=[2,2,2,1,1,1]$ (see Proposition \ref{2.1-EKS}). \\
If $ \alpha=(1\;2\;3\;4\;5\;6)(7)(8)(9)$, the factorization $[2[3],1[1,1,1]]$ of $D_1$ describes three blocks of size three,  preserved by $ \alpha$:  

 \vspace{1pt} 
\noindent $\bullet $  two blocks given and preserved by the support of the $6$-cycle: $ \{1,3,5\},\, \{2,4,6\}$ and one block given by  ${\rm Fix}\, \alpha$:
 $ \{7,8,9\}$. 
 
 \vspace{1pt}
\noindent  If we want $G$  imprimitive, it is necessary to define $\beta$ in such a way  that the  blocks preserved by $\alpha $ are also preserved  by $ \beta .$ Then  the factorization $[2[1,1,1],1[1,1,1]]$ of $D_2$ suggests: 

 \vspace{1pt}
\noindent $\bullet \textrm{ two blocks preserved by the  set of transpositions, one block from ${\rm Fix}\,\beta$.}$

 \vspace{2pt}
\noindent   Hence,  regarding the blocks for $\alpha$, we define $\beta=(1\;7)(3\;8)(5\;9)(2)(4)(6)$. 
\end{Rm}
\begin{Rm}\label{PL}
The results above are closely  related to some results in \cite{ACDHL}. To see this, it is necessary to introduce some of their definitions:
\begin{enumerate}[noitemsep, leftmargin=15pt]
\item Let $k$ be a positive integer, and $(w_1, \dots, w_l)$ a partition of w. Then the partition $(kw_1, \dots , kw_l)$ of $kw$ is said to be an ic-partition (for ``imprimitive cycle'') of type $(k, w)$.
\item For a partition $P$ with $r$ parts, a clustering of $P$ is a partition of the set of parts of $P$ into parts (called clusters) $P_1, \dots, P_t$ (each of which is a partition).
\item An i-partition (for ``imprimitive'') of type $(u, w)$ is a partition of $uw$ which has a clustering into clusters which are ic-partitions of $u_iw$ of type $(u_i,w)$ for $i = 1,\dots,r$, where $(u_1,\dots,u_r)$ is a partition of $u$.
\item The i-type of a partition is the set of pairs $(u,w)$ for which the partition has an
i-partition of type $(u, w)$. The i-type of a permutation is the i-type of its cycle partition. A permutation is primitive if and only if its i-type is empty.
\end{enumerate}
\end{Rm}
\noindent Notice that the  definition of ``a i-partition of type $(u,w)$" above is equivalent to the definition of  ``a product partition  for   $d=uw$" in Definition \ref{Pp} (\S 2, \cite{BN&GDL1}). Hence Proposition
\ref{nt-minimal}  in terms of \cite{ACDHL} establishes a kind of generalization of Theorem 2.5 of \cite{ACDHL}:
\begin{Cor} Let $\mathscr{D}=\{D_1,\dots,D_n\}$ be
a collection  of  i-partition of $d$ of type $(u,w)$, with $\nu(\mathscr{D})=d-1$. Then there exists $\alpha_i \in D_i$, for $i=1, \dots,n,$ such that  $\langle \alpha_1, \dots, \alpha_n \rangle$ is  imprimitive with $u$ blocks of cardinality $w$, if and only if $\nu(\mathscr{U})=u-1$, where $\mathscr{U}$ is the collection  of partitions  of $u$ defined by the clusterings in $\mathscr{D}$. \qed
\end{Cor}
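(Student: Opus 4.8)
The plan is to read the statement through the dictionary $\mathscr{D}=\mathscr{U}.\mathscr{W}$: since each $D_i$ is an i-partition of type $(u,w)$, i.e.\ a product partition for the factorization $d=uw$, and $\mathscr{U}=\{U_1,\dots,U_n\}$ is exactly the collection of clustering partitions of $u$, Proposition \ref{fator} applies and gives the identity $\nu(\mathscr{D})=\nu(\mathscr{W})+w\,\nu(\mathscr{U})$. Because $\nu(\mathscr{D})=d-1$ is even, $d$ and hence $u$ and $w$ are odd; I would use this repeatedly to guarantee that the auxiliary data $\mathscr{U},\mathscr{W}$ have even defect and are genuine admissible data on $\mathbb{R}P^2$ once their defects are pinned down.

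For the forward implication, suppose $\alpha_i\in D_i$ are chosen so that $G=\langle\alpha_1,\dots,\alpha_n\rangle$ is transitive and imprimitive with $u$ blocks of cardinality $w$. Passing to the induced action on the $u$ blocks produces permutations $\bar\alpha_i$ of cyclic structure $U_i$, and transitivity of $G$ on the $d$ points forces $\langle\bar\alpha_1,\dots,\bar\alpha_n\rangle$ transitive on the $u$ blocks. The elementary orbit-counting bound --- adjoining a permutation $\beta$ to a family of permutations lowers the number of orbits by at most $\nu(\beta)$, so a transitive family on $u$ points has total defect at least $u-1$ --- then yields $\nu(\mathscr{U})\ge u-1$. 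On the other hand $\nu(\mathscr{W})\ge 0$, so the identity above gives $w\,\nu(\mathscr{U})=d-1-\nu(\mathscr{W})\le uw-1$, whence $\nu(\mathscr{U})\le u-1$. The two inequalities force $\nu(\mathscr{U})=u-1$.

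For the converse, assume $\nu(\mathscr{U})=u-1$; then $\mathscr{U}$ is a non-trivial admissible datum on $\mathbb{R}P^2$ with minimal defect, and Proposition \ref{fator} forces $\nu(\mathscr{W})=w-1$, so $\mathscr{W}$ is likewise admissible with minimal defect. I would build the imprimitive group in two layers. First, the minimality $\nu(\mathscr{U})=u-1$ lets me choose $\bar\alpha_i\in U_i$ generating a transitive group on a set of $u$ blocks (a spanning ``cactus'' realization, available precisely because the total defect equals the connectivity threshold $u-1$). Second, replacing each block by $w$ points and using $\nu(\mathscr{W})=w-1$, I assemble each $\bar\alpha_i$ together with within-block return maps of the prescribed types from $\mathscr{W}_i$ into a block-preserving permutation $\alpha_i$ of cyclic structure $D_i=U_i.\mathscr{W}_i$. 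Every such $\alpha_i$ preserves the block system, so $\langle\alpha_1,\dots,\alpha_n\rangle$ is imprimitive with $u$ blocks of cardinality $w$ as soon as it is transitive, and transitivity follows by arranging the within-block data so that the stabiliser of one block already acts transitively on it. Conceptually this is the permutation-group shadow of Propositions \ref{nt-minimal} and \ref{yo}: $\mathscr{D}=\mathscr{U}.\mathscr{W}$ with admissible minimal first factor is decomposable, and decomposability is imprimitivity of the monodromy.

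I expect the genuine obstacle to be this converse construction, specifically the simultaneous control of the two levels: one must realise the prescribed cycle types $D_i$ by block-preserving permutations while guaranteeing both that the quotient group is transitive on the $u$ blocks and that a block-stabiliser is transitive within its block, with no slack to spare since both $\nu(\mathscr{U})=u-1$ and $\nu(\mathscr{W})=w-1$ are minimal. Making the spanning realisation compatible with the requirement that the cycles belonging to a single $\alpha_i$ stay disjoint is the delicate bookkeeping; this is exactly the flavour of construction carried out in the technical section, and I would either quote those constructions or run the same inductive attachment argument here.
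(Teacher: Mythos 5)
Your proposal and the paper part ways at the very start: the paper offers no written proof of this corollary at all --- it is stated as an immediate consequence of Proposition \ref{nt-minimal}, because after the dictionary ``i-partition of type $(u,w)$ $=$ product partition for $d=uw$'' (Definition \ref{Pp}, Remark \ref{PL}), imprimitivity with $u$ blocks of size $w$ is matched to decomposability of a realization via Proposition \ref{yo} and Remark \ref{blocos}, and Proposition \ref{nt-minimal} characterizes that by an algebraic factorization whose first factor has defect $u-1$. Your argument is instead purely combinatorial, and in the forward direction it is complete, correct, and more self-contained than the paper's route: passing to the block quotient, the orbit-counting bound $\nu(\mathscr{U})\geq u-1$ forced by transitivity on the $u$ blocks, and the integrality squeeze $w\,\nu(\mathscr{U})\leq uw-1$ from Proposition \ref{fator} avoid branched coverings entirely. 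It also quietly dodges a subtlety that the paper's translation glosses over: the monodromy group of a realization is $\langle \alpha_1,\dots,\alpha_n,\omega\rangle$, while the corollary concerns $\langle \alpha_1,\dots,\alpha_n\rangle$ alone, so quoting Propositions \ref{nt-minimal} and \ref{yo} does not literally yield the statement without the further observation (made only in \S\ref{arbpt}) that the relevant realizations can be chosen with $\langle\alpha_1,\dots,\alpha_n\rangle$ itself transitive. (One caveat you share with the paper: a partition may admit several clusterings of type $(u,w)$ with different defects, so the induced quotient structure need not be the preassigned $\mathscr{U}$; both your statement of the forward step and the corollary itself must be read existentially over clusterings.)

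The gap is the converse, and you flagged it yourself. As written, ``choose $\bar\alpha_i\in U_i$ generating a transitive group on the blocks'' and ``arrange the within-block data so that a block stabiliser acts transitively'' are plans, not proofs: you must (i) realize $\mathscr{U}$, with $\nu(\mathscr{U})=u-1$, by a transitive tuple on $u$ points, and (ii) choose the return maps around the block-cycles, with the prescribed cycle structures from $\mathscr{W}$ and total defect $w-1$, so that after transport to a common base block they generate a transitive group on $w$ points; only then is $\langle\alpha_1,\dots,\alpha_n\rangle$ transitive, block-preserving, and of the prescribed cycle types. Both steps are instances of the same standard fact (a collection of partitions of $m$ with total defect exactly $m-1$ admits a transitive realization), and both are available in the paper --- (i) is the realization behind Corollary \ref{real-min} (cf.\ Lemma \ref{d-ciclo} and Corollary \ref{b_i} for two partitions, and the remark in \S\ref{arbpt} on the EKS construction), while (ii) is exactly the lifting performed in the converse half of the proof of Proposition \ref{nt-minimal} --- but your text defers to them rather than executing either, so the converse remains incomplete until one of those citations is made precise.
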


\begin{Ex}
Let $A=B=[3,2,2,1,1]$. Then any transitive permutation group $\langle \alpha, \beta \rangle$, for $\alpha\in A,\; \beta \in B$, is primitive. This is because the i-type of $[3,2,2,1]$ is  $\{(3,3)\}$ with unique  clustering given by $1[3],1[2,1], 1[2,1]$ that defines the set $\mathscr{U}=\{[1,1,1],[1,1,1]\}$ with $\nu(\mathscr{U})=0 \neq 2$. In other words, by Remark \ref{blocos},  the only possible system of $3$ blocks of length $3$ has to be given by: one block defined by the $3$-cycle and two blocks defined by a transposition and a fixed element. Thus, if $\alpha \in A$ there is no $\beta \in B$ that can take care  of  transitivity and imprimitivity, simultaneously. 
\end{Ex}

\section{Branched coverings with  at most $2$  branched points}\label{leq2}
\subsection{One branched point}
Considering branched coverings with branching locus in  one point, since we are assuming 
$\nu(\mathscr{D})=d-1$ with $d$ odd,  the admissible branched datum has to be $\mathscr{D}=\{[d]\}$. In \cite{BN&GDL2} this case is solved: 

\vspace{2pt}
\noindent {\it Example 3 \cite{BN&GDL2}: \label{1pto} There is only one branched covering of degree $d$, for $d$ odd,   over the projective plane  with one branched point 
 and $\nu(\mathscr{D})=d-1$. Further, this branched covering
$(\mathbb{R}P^{2},\phi,\mathbb{R}P^{2},\{x\},d)$ is decomposable if, and only if
  $d$ is non-prime.}

\vspace{5pt}
\noindent  The ``if and only if" part of the example above can be proved alternatively as follows:\\ 
 Let $\mathscr{D}=\{D\}$ be the  branch datum. Since $\nu(\mathscr{D})=d-1$ then $D=[d]$ and any non-trivial factorization   of $d$, $d=uw$,  defines an algebraic factorization of $\mathscr{D}$ by admissible factors $\mathscr{U}=[u]$ and $\mathscr{W}=[w]$. Then, by Proposition \ref{nt-minimal}, $\mathscr{D}$ is decomposable. 

\subsection{Two  branched points }\label{2points}
\vspace{5pt}
\noindent Given a partition $D=[k_1,\dots,k_r]$ of $d \in \mathbb{Z}^+$, we set up that $\gcd (D)=\gcd\{k_1,\dots,k_r\}.$  The purpose  of this section is to prove:

\begin{theo}\label{iff2pt}
Let $\mathscr{D}=\{D_{1},D_{2}\}$  be a branch datum such that $\nu(\mathscr{D})=d-1$. Then $\mathscr{D}$ is realizable by an indecomposable branched covering if and only if  $\gcd(D_{i})=1$ for $i=1,2$ and  $\mathscr{D}\neq \{[2,\dots,2,1],[2,\dots,2,1]\} $.
\end{theo}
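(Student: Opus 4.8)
The plan is to pass from the topological statement to a purely group-theoretic one and then argue the two implications separately. Since $\nu(\mathscr{D})=d-1$ forces $\chi(M)=d\chi(\mathbb{R}P^2)-\nu(\mathscr{D})=1$, the covering surface is $M=\mathbb{R}P^2$, which is non-orientable; hence every connected realization is automatically surjective on $\pi_1$ (it cannot factor through $S^2$, as a branched covering of an orientable base has orientable total space), i.e. it is a primitive covering. By Proposition \ref{yo} together with Proposition \ref{2.1-EKS}, the datum $\mathscr{D}$ is therefore realizable by an indecomposable covering if and only if there exist $\alpha_1\in D_1$, $\alpha_2\in D_2$ and $\omega\in\Sigma_d$ with $\alpha_1\alpha_2\omega^2=1$ such that $G=\langle\alpha_1,\alpha_2,\omega\rangle$ is transitive and \emph{primitive}. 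Note that the relation forces $(\alpha_1\alpha_2)^{-1}=\omega^2$ to be a square in $\Sigma_d$; this is the single genuine compatibility constraint that couples the construction of the $\alpha_i$ with the existence of $\omega$.

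For the sufficiency direction I would build $G$ so that primitivity is certified by Lemma \ref{gcd}. Assuming $\gcd(D_i)=1$ for $i=1,2$ and $\mathscr{D}\neq\{[2,\dots,2,1],[2,\dots,2,1]\}$, the goal is to choose $\alpha_i\in D_i$ so that the product $\alpha_1\alpha_2$ has cyclic structure $[d-2c,c,c]$ for some $c$ with $\gcd(c,d)=1$, while $\langle\alpha_1,\alpha_2\rangle$ acts transitively. Two observations make this the right target. First, since $d$ is odd, a partition of the form $[d-2c,c,c]$ is always a square in $\Sigma_d$ (the odd $(d-2c)$-cycle imposes no constraint, and the two $c$-cycles pair up when $c$ is even), so a suitable $\omega$ automatically exists. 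Second, by Lemma \ref{gcd} any transitive group containing such an element is primitive, so $G$ is primitive at once. The substance is thus the multiplicative realization problem: producing, from arbitrary partitions $D_1,D_2$ with $\gcd(D_i)=1$, permutations in the prescribed classes whose product is transitive and has the prescribed type $[d-2c,c,c]$. This is exactly what the realization lemmas of this section supply (extending \cite{EKS}); I would dispose of it by a case analysis on the shapes of $D_1$ and $D_2$, treating the extreme partitions (few large parts versus many parts) separately and, in the case $c=1$, recovering the primitive type $[d-2,1,1]$ of \cite{ACDHL}.

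For the necessity direction I would prove the contrapositive: if $\gcd(D_1)=k>1$ (and symmetrically for $D_2$), or if $\mathscr{D}$ is the excluded datum, then \emph{every} realization is imprimitive. When $\gcd(D_1)=k>1$, the identity $\nu(\mathscr{D})=d-1$ with two branch points forces $\nu(D_2)=r_1-1$, where $r_1$ is the number of parts of $D_1$; since each part of $D_1$ is a multiple of $k$ one has $r_1\le d/k$, so $\alpha_2$ has very small support. The permutation $\alpha_1$ preserves a system of $k$ blocks of size $d/k$ (colouring each cycle by position modulo $k$), with only a cyclic phase ambiguity among its cycles; the point is that the scarce support of $\alpha_2$, bounded by $\nu(D_2)=r_1-1$, cannot destroy all such colourings while $G$ remains transitive, so $\alpha_2$ and $\omega$ preserve a compatible block system and $G$ is imprimitive. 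For the excluded datum, $\alpha_1$ and $\alpha_2$ are involutions, so $\langle\alpha_1,\alpha_2\rangle$ is dihedral; analysing the cycle type of $\rho=\alpha_1\alpha_2$ and the square roots of $\rho^{-1}$ shows that, for $d$ composite, $G$ cannot escape the dihedral block structure, which gives imprimitivity.

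The main obstacle is the sufficiency construction: realizing the precise product type $[d-2c,c,c]$ together with transitivity uniformly over all non-excluded $(D_1,D_2)$, especially at the boundary where one partition has very few or very many parts, is where the delicate permutation constructions of this section are needed, and it is precisely the excluded datum $\{[2,\dots,2,1],[2,\dots,2,1]\}$ and the data with $\gcd>1$ that obstruct every such attempt. A secondary difficulty is making the imprimitivity arguments airtight, i.e. turning the phase-counting for $\gcd>1$ and the dihedral square-root analysis for the excluded datum into proofs that \emph{no} choice of the $\alpha_i$ and $\omega$ can yield a primitive group. (For $d$ prime every degree-$d$ covering is indecomposable, so the real content lies in the composite case.)
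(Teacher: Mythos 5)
Your reduction to a group-theoretic statement and your treatment of the excluded datum $\{[2,\dots,2,1],[2,\dots,2,1]\}$ (dihedral analysis, matching Proposition \ref{special}) are fine, but the sufficiency direction has a genuine gap: the single strategy of forcing $\alpha_1\alpha_2$ into the class $[d-2c,c,c]$ with $\gcd(c,d)=1$ and invoking Lemma \ref{gcd} cannot cover all admissible data, and the paper explicitly isolates the case it misses. First, an internal inconsistency: by Lemma \ref{d-ciclo}, under $\nu(\mathscr{D})=d-1$ the subgroup $\langle\alpha_1,\alpha_2\rangle$ is transitive if and only if $\alpha_1\alpha_2$ is a $d$-cycle, so you cannot have both $\langle\alpha_1,\alpha_2\rangle$ transitive and $\alpha_1\alpha_2\in[d-2c,c,c]$; in the paper's Lemma \ref{3cycles0} the pair $\langle\alpha,\beta\rangle$ has exactly two orbits, and transitivity of the full monodromy group is restored only by $\omega$. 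The serious problem is that your target class may be unattainable altogether. Take $d=15$, $D_1=[7,5,3]$, $D_2=[3,1,\dots,1]$ (so $\nu(D_1)=12$, $\nu(D_2)=2$, $\gcd(D_1)=\gcd(D_2)=1$). Since $\beta\in D_2$ has support of size $3$, either it meets each of the three cycles of $\alpha$ exactly once, in which case $\alpha\beta$ is a $15$-cycle, or it leaves the $3$-cycle or the $5$-cycle of $\alpha$ untouched, in which case $\alpha\beta$ has a part equal to $3$ or $5$. But the types $[15-2c,c,c]$ with $\gcd(c,15)=1$, namely $[13,1,1]$, $[11,2,2]$, $[7,4,4]$, $[1,7,7]$, contain no part equal to $3$ or $5$; the same obstruction rules out the other companion $D_2=[2,2,1,\dots,1]$. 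So no choice of $\alpha,\beta$ realizes your target, and Lemma \ref{gcd} can never be applied to this datum. This is precisely the paper's third case (all parts of $D_1$ at least $3$, every non-maximal part sharing a factor with $d$), which requires the separate and structurally different argument of Lemma \ref{C123}: there $\alpha\beta$ is made a $d$-cycle, and primitivity is proved directly by showing that every block system suggested by an algebraic factorization of $\mathscr{D}$ is destroyed by a careful choice of $\beta$; no element of type $[d-2c,c,c]$ is produced at all.

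The necessity direction is also incomplete. For $\gcd(D_1)\neq 1$ you must show that every realization is imprimitive, and realizations come in two kinds: those whose monodromy group $G=\langle\alpha_1,\alpha_2,\omega\mid\omega^2=\alpha_1\alpha_2\rangle$ has $\langle\alpha_1,\alpha_2\rangle$ already transitive, and those where $\langle\alpha_1,\alpha_2\rangle$ is intransitive and transitivity is created only by $\omega$. Your colouring argument addresses at best the first kind (the paper's Proposition \ref{2-trans} does this with an explicit nested block construction along the cycles of $\alpha\beta_1\cdots\beta_j$), but it does not engage with the second kind, where it is not clear how to exhibit an invariant block system at all: the paper instead proves $G\neq\langle G_x,G_y\rangle$ for suitable points $x,y$ (Proposition \ref{dospuntos}, resting on the cycle count of Proposition \ref{N} and a parity count of the occurrences of $\omega$ in words representing elements of $G_x$ and $G_y$) and then concludes imprimitivity from Wielandt's theorem, Proposition \ref{HW}, after checking that $G$ is not regular of prime degree. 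Without an argument of this second type your necessity proof does not cover all realizations, since the claim ``$\alpha_2$ and $\omega$ preserve a compatible block system'' is exactly what needs proof and is obtained in the paper only indirectly. (A side remark: $\gcd(D_2)=1$ is automatic here, since $D_2$ must contain a part equal to $1$ by Remark \ref{numberrestr}, so only $D_1$ is in question.)
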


\noindent Let us start with some generalities.
Let $d \in \mathbb{Z}^{+}$ be an odd integer and  let  $\mathscr{D}=\{D_{1},D_{2}\}$ be  a collection of two partitions of $d$ with $\nu(\mathscr{D})=d-1$. From now on, we will assume:
 \begin{equation}\label{Dis}  \nu(D_{1})\geq \nu(D_{2})
  \textrm{ and we have  } D_{i} \neq [d] \textrm{ for } i=1,2 \;(\textrm{otherwise } \nu(\mathscr{D})>d-1).
 \end{equation}
 Let us set up the notation: 

\vspace{-5pt}
\begin{equation}\label{D1D2}
\begin{array}{ll}
D_1=[c_1,c_2,\dots,c_r,1,\dots,1 ],  \textrm{ with }  c_{i-1}\geq c_i>1 \textrm{ and }  \ell_1 \textrm{ ones,  and }\\
     D_2=[e_1,e_2,\dots,e_s,1,\dots,1], \textrm{ with } e_{j-1} \geq e_j>1 \textrm{ and } \ell_2 \textrm{ ones.} 
     \end{array}
     \end{equation}
     \begin{Rm}\label{numberrestr} From (\ref{Dis}) and (\ref{D1D2}) hold:
     \vspace{-5pt}
     \begin{itemize} [noitemsep, leftmargin=15pt]
     \item[a)] $r+\ell_1\leq s+\ell_2$ and $r+\ell_1+s+\ell_2=d+1$ because  $\nu(D_{1})\geq \nu(D_{2})$and $\nu(\mathscr{D})=d-1$;
     \item[b)] $\ell_2\geq 1$ because $\nu({D_2})\leq (d-1)/2$; 
     \item[c)] if $\ell_2=1$, since $\nu({D_2})\leq (d-1)/2$ and equivalently  $s+\ell_2 \geq (d+1)/2$, then $D_2=[2,\dots,2,1]$ and   $\nu(D_2)=(d-1)/2=\nu(D_1)$.
        \item[d)] $r+ \ell_1=\nu(D_2)+1$ and $s+\ell_2=\nu(D_1)+1$, this is straightforward from the equality  $\nu(\mathscr{D})=d-1$. 
     \end{itemize}
     \end{Rm}

\begin{Lemma}\label{unos} Let  $\mathscr{D}=\{D_{1},D_{2}\}$ be as in  (\ref{Dis}) and (\ref{D1D2}).
Then $\ell_1\leq c_1+\dots+c_r-2r+1$  and   $\ell_1= c_1+\dots+c_r-2r+1$  if and only   if $\nu({D_1})= (d-1)/2=\nu({D_2}).$
 \end{Lemma}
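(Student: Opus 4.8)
The plan is to reduce the stated inequality to a direct comparison of the total defects $\nu(D_1)$ and $\nu(D_2)$, which by hypothesis (\ref{Dis}) already satisfy $\nu(D_1)\geq\nu(D_2)$ together with $\nu(D_1)+\nu(D_2)=d-1$. The whole lemma should then fall out as a translation of the trivial defect inequality $\nu(D_2)\leq(d-1)/2$ into the combinatorial data of $D_1$.

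First I would rewrite both quantities appearing in the statement in terms of $\ell_1$ and $r$. Since the ones contribute nothing to the defect, $\nu(D_1)=\sum_{i=1}^{r}(c_i-1)=(c_1+\dots+c_r)-r$, while the fact that the parts of $D_1$ sum to $d$ gives $c_1+\dots+c_r=d-\ell_1$. Substituting this into the target inequality $\ell_1\leq c_1+\dots+c_r-2r+1$ turns it into $\ell_1\leq(d-\ell_1)-2r+1$, that is, $\ell_1+r\leq(d+1)/2$.

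Next I would invoke Remark \ref{numberrestr}(d), which provides $r+\ell_1=\nu(D_2)+1$. Under this identity the reduced inequality $\ell_1+r\leq(d+1)/2$ is precisely $\nu(D_2)\leq(d-1)/2$, and this last bound is immediate: from $\nu(D_1)\geq\nu(D_2)$ and $\nu(D_1)+\nu(D_2)=d-1$ we get $2\nu(D_2)\leq d-1$. Tracing equality back through these equivalences, $\ell_1=c_1+\dots+c_r-2r+1$ holds if and only if $\nu(D_2)=(d-1)/2$; combined once more with $\nu(D_1)+\nu(D_2)=d-1$ this forces $\nu(D_1)=\nu(D_2)=(d-1)/2$, which is exactly the claimed characterization of the equality case.

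I do not expect a genuine obstacle here. The only care required is the bookkeeping relating the number of nontrivial parts $r$, the number of ones $\ell_1$, and the defect $\nu(D_1)$; once Remark \ref{numberrestr}(d) is in hand the inequality and its equality case are purely formal consequences of $\nu(D_1)\geq\nu(D_2)$ and $\nu(\mathscr{D})=d-1$.
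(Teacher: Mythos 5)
Your proof is correct and follows essentially the same route as the paper's: both reduce the inequality $\ell_1\leq c_1+\dots+c_r-2r+1$ to the trivial defect bound coming from $\nu(D_1)\geq\nu(D_2)$ and $\nu(D_1)+\nu(D_2)=d-1$, and read off the equality case from it. The only cosmetic difference is that the paper substitutes $\nu(D_1)=d-\ell_1-r\geq(d-1)/2$ directly, while you pass through the equivalent bound $\nu(D_2)\leq(d-1)/2$ via Remark \ref{numberrestr}(d); the algebra is the same.
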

\begin{proof}  Call $k=c_1+\dots+c_r$. By hypothesis we have  $\nu({D_1})\geq (d-1)/2$. Equivalently  $d-\ell_1-r\geq (d-1)/2$ and     $\ell_1 +k=d$. Hence $\ell_1+k-\ell_1-r\geq (\ell_1+k-1)/2$ which is equivalent to 
 $\ell_1\leq k-2r+1$. Also follows that   $\ell_1= k-2r+1$ if and only if $\nu({D_1})=(d-1)/2$. 
 \end{proof}
\begin{Lemma}\label{s<a} Let $d \in \mathbb{Z}^{+}$ be an odd integer and $\mathscr{D}=\{D_{1},D_{2}\}$
 with $\nu(\mathscr{D})=d-1.$ If $\ell_1=0$ then  $c_i \leq \ell_2$ for all summand $c_i$ of $D_1$. 
\end{Lemma}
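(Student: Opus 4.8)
The plan is to reduce everything to the single inequality $c_1 \leq \ell_2$: since the parts of $D_1$ are listed in non-increasing order, $c_1$ is the largest summand, and once $c_1 \leq \ell_2$ is known the general claim follows from $c_i \leq c_1$ for every $i$.

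First I would extract defect information from the hypothesis $\ell_1 = 0$. In this case $D_1 = [c_1, \dots, c_r]$ has its $r$ nontrivial parts summing to $d$, so $\nu(D_1) = d - r$; note also $r \geq 2$ because $D_1 \neq [d]$ by (\ref{Dis}). Substituting into $\nu(D_1) + \nu(D_2) = d - 1$ pins down the defect of $D_2$ exactly as $\nu(D_2) = r - 1$.

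The key step, and the place where the hypothesis $\ell_1 = 0$ really bites, is to bound the number $s$ of nontrivial parts of $D_2$. Since each $e_j \geq 2$ contributes at least $1$ to the defect, one has $s \leq \nu(D_2)$, and the exact value $\nu(D_2) = r - 1$ obtained above upgrades this to $s \leq r - 1$. I expect this to be the crux: it is only because $\ell_1 = 0$ forces the precise equality $\nu(D_2) = r-1$ (rather than a mere inequality) that the bound on $s$ is sharp enough for what follows.

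Finally I would combine two elementary estimates. Counting all parts via the identity $r + s + \ell_1 + \ell_2 = d + 1$ of Remark \ref{numberrestr} gives, with $\ell_1 = 0$, that $\ell_2 = d + 1 - r - s \geq d - 2r + 2$, using $s \leq r - 1$. On the other hand, since $c_2, \dots, c_r \geq 2$ we have $c_1 = d - (c_2 + \dots + c_r) \leq d - 2(r-1) = d - 2r + 2$. Chaining the two inequalities yields $c_1 \leq d - 2r + 2 \leq \ell_2$, which completes the argument. No serious obstacle is anticipated beyond keeping the part-count bookkeeping consistent with the defect bookkeeping, both of which rest on the hypothesis $\ell_1 = 0$.
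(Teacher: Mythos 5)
Your proof is correct and takes essentially the same route as the paper's: both extract $s \leq r-1$ from the exact defect count forced by $\ell_1 = 0$ (the paper via its equations (\ref{e}), you via $\nu(D_2)=r-1$), and both then use $c_i \geq 2$ to bound $c_1$. Your final chain $c_1 \leq d-2r+2 \leq \ell_2$ through the part-count identity $r+s+\ell_2 = d+1$ is just a repackaging of the paper's inequality $c_1 \leq s+\ell_2-(r-1) \leq \ell_2$, so the two arguments are the same in substance.
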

\begin{proof}
 It is enough to show that $c_1 \leq \ell_2$. Since $\ell_1=0$, from Remark \ref{numberrestr} d) we have:
  \begin{equation}\label{e}
  \begin{array}{ll}
 e_1+(e_2-1)+\dots+(e_s-1)=r, \textrm{ and}\\
 c_1+(c_2-1)+\dots+(c_r-1)=s+\ell_2.
 \end{array}
 \end{equation}
Thus we have $s \leq r-1$ and since  $c_i \geq 2$, for $i=1,\dots,r,$ 
  then (\ref{e}) implies $c_1\leq \ell_2$ and therefore $c_i\leq \ell_2$, for $i=1,\dots,r.$
  \end{proof}

 By Proposition \ref{2.1-EKS}, the realization of $\mathscr{D}=\{D_1,D_2\}$ is associated with the construction of a transitive permutation group  $G=\langle \alpha, \beta, \gamma | \gamma^2= \alpha \beta \rangle$ such that $\alpha \in D_1,\; \beta \in D_2$. In this sense, we look at the possibilities.
 
\begin{Lemma}\label{d-ciclo}Let $\mathscr{D}=\{D_1,D_2\}$  be a collection of partitions of an odd integer $d$ with $\nu(\mathscr{D})=d-1$. Let $\alpha, \beta$ be permutations such that $\alpha \in D_1,\; \beta \in D_2$. Then  
  $\langle \alpha, \beta \rangle$ is transitive if and only if  $\alpha \beta$ is a $d$-cycle, and so a square. 
\end{Lemma}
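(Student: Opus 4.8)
The plan is to translate the statement into a count of cycles and then reduce the substantive (``only if'') direction to the nonnegativity of the genus of an auxiliary covering of the sphere; the converse and the ``square'' assertion are immediate. Throughout write $c(\sigma)$ for the number of cycles of $\sigma\in\Sigma_d$, counting fixed points as $1$-cycles, so that $\nu(\sigma)=d-c(\sigma)$. The hypothesis $\nu(\mathscr{D})=\nu(\alpha)+\nu(\beta)=d-1$ is then exactly
\[
c(\alpha)+c(\beta)=d+1 .
\]

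For the ``if'' direction, if $\alpha\beta$ is a $d$-cycle then $\langle\alpha\beta\rangle\leq\langle\alpha,\beta\rangle$ already acts transitively, hence so does $\langle\alpha,\beta\rangle$; and since $d$ is odd the $d$-cycle $\alpha\beta$ has order $d$, so $\alpha\beta=(\alpha\beta)^{d+1}=\bigl((\alpha\beta)^{(d+1)/2}\bigr)^2$ is a square, which is precisely what is needed to produce the element $\gamma$ with $\gamma^2=\alpha\beta$ in the group of Proposition \ref{2.1-EKS}. The real content is the ``only if'' direction, for which I would use the inequality: whenever $\langle\alpha,\beta\rangle$ is transitive on $\{1,\dots,d\}$,
\[
\nu(\alpha)+\nu(\beta)+\nu(\alpha\beta)\ \geq\ 2(d-1),
\]
equivalently $c(\alpha)+c(\beta)+c(\alpha\beta)\leq d+2$. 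Granting this, the hypothesis $\nu(\alpha)+\nu(\beta)=d-1$ forces $\nu(\alpha\beta)\geq d-1$; since $\nu(\sigma)\leq d-1$ for every $\sigma\in\Sigma_d$, with equality only for a $d$-cycle, we conclude $\nu(\alpha\beta)=d-1$, i.e. $c(\alpha\beta)=1$, so $\alpha\beta$ is a $d$-cycle (and hence a square, by the paragraph above).

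It remains to justify the displayed inequality, which is the only substantial step. The triple $(\alpha,\beta,(\alpha\beta)^{-1})$ has trivial product and, by transitivity, is the monodromy of a connected branched covering $\phi\colon M\to S^2$ with at most three branch points; applying the Riemann-Hurwitz formula $(\ref{rhf})$ with $N=S^2$ and $\chi(S^2)=2$ gives $\nu(\alpha)+\nu(\beta)+\nu(\alpha\beta)=2d-\chi(M)$ (using that a permutation and its inverse have the same cyclic structure, so $\nu((\alpha\beta)^{-1})=\nu(\alpha\beta)$), and the bound follows at once from $\chi(M)\leq 2$, valid for any closed surface. Equivalently, and entirely self-containedly, one may argue combinatorially: the bipartite graph whose vertices are the cycles of $\alpha$ and the cycles of $\beta$, with one edge per point of $\{1,\dots,d\}$, is connected by transitivity, has $c(\alpha)+c(\beta)$ vertices and $d$ edges, and carries the natural ribbon structure with $c(\alpha\beta)$ faces, so Euler's inequality $V-E+F\leq 2$ reads precisely $c(\alpha)+c(\beta)+c(\alpha\beta)\leq d+2$.

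The main obstacle is thus isolated entirely in this genus-nonnegativity (equivalently $\chi(M)\leq 2$) bound; everything else is the arithmetic of the defect together with the elementary observation that an odd-length cycle is a square. A minor point to address cleanly is the invocation of the classical Riemann existence correspondence to pass from a transitive tuple with trivial product to an honest connected covering of $S^2$, but this is standard and, as indicated, can be bypassed by the direct Euler-formula computation if one prefers a purely combinatorial argument.
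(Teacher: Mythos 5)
Your proof is correct, but it takes a genuinely different route from the paper's. The paper argues by direct cycle counting: by Remark \ref{numberrestr}(d) the hypothesis $\nu(\mathscr{D})=d-1$ says exactly that $\nu(\beta)=\sum_{i=1}^{s}(e_i-1)$ equals the number of cycles of $\alpha$ minus one, so transitivity forces each $e_i$-cycle $\beta_i$ of $\beta$ to join $e_i$ \emph{distinct} cycles of the partial product $\alpha\beta_1\cdots\beta_{i-1}$, each junction dropping the cycle count by exactly $e_i-1$; hence $\alpha\beta$ has $r+\ell_1-\sum_{j=1}^{s}(e_j-1)=1$ cycle. You instead deduce the lemma from the general genus inequality $\nu(\alpha)+\nu(\beta)+\nu(\alpha\beta)\geq 2(d-1)$, valid for \emph{any} transitive pair, justified either by Riemann--Hurwitz (\ref{rhf}) over $S^2$ or by Euler's formula for the bipartite ribbon graph; combined with $\nu(\mathscr{D})=d-1$ and the trivial bound $\nu(\alpha\beta)\leq d-1$, this pins $\alpha\beta$ down to a $d$-cycle. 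Your route is more conceptual and generalizes verbatim to any number of factors (which would also cover the $k\geq 3$ setting of Section \ref{arbpt} and the Appendix), at the cost of invoking the hypermap or monodromy-of-coverings machinery as a black box. What the paper's elementary merging argument buys, and yours does not directly provide, is the finer incidence structure recorded in Corollary \ref{b_i}: each $e_i$-cycle of $\beta$ meets $e_i$ different cycles of $\alpha$ in exactly one point and every cycle of $\alpha$ is met. The paper extracts this as ``straightforward from the proof of Lemma \ref{d-ciclo}'' and then uses it repeatedly in the constructive arguments of Section \ref{leq2} (e.g.\ Lemma \ref{3cycles0} and Proposition \ref{2-trans}); to recover it from your approach one would have to work out the equality (genus-zero) case of the Euler inequality, which is roughly the paper's argument in disguise.
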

\begin{proof}
If $\alpha \beta$ is a $d$-cycle, it is clear that $\langle \alpha, \beta \rangle$ is transitive.\\
For the converse, using the notation in  (\ref{D1D2}), we have $\nu(\beta)=\nu(D_2)=\sum_{i=1}^{s}(e_i-1)$, see subsection \S \ref{PG}. 
 Let $\beta_i$ be a $e_i$-cycle for $i=1,\dots, s.$
Notice that ${\rm Supp}\,\beta$ and $\nu(\beta)$ are defined by these $\beta_i$'s. \\
  By Remark \ref{numberrestr} d) we have that  $e_1+\sum_{i=2}^{s}(e_i-1)=r+\ell_1$, where $r+\ell_1$ is the number of cycles of $\alpha$. Then  if $\langle \alpha, \beta\rangle$ is transitive, the last equality implies that $\beta_{1}$  connects $e_1$ cycles of $\alpha$ and  $\beta_{i}$ connects $e_{i}$ disjoint cycles of $\alpha \beta_{1} \dots \beta_{i-1}$,
 for $i=2,\dots,s$.  Thus the number of cycles of 
$\alpha\beta_{1}$ is $r+\ell_1-e_1+1$ and 
the number of cycles of 
$\alpha \beta=\alpha\beta_{1}\dots\beta_s$ is $r+\ell_1-\sum_{j=1}^s (e_j-1)=1.$ Hence   $\alpha \beta$ is a $d$-cycle, and so a square. 
\end{proof}

It is straightforward from the proof of Lemma \ref{d-ciclo} that:
\begin{Cor}\label{b_i} With the hypothesis   and the notation in Lemma \ref{d-ciclo}, the group   $\langle \alpha, \beta \rangle$
 is transitive if and only if  each $e_i$-cycle of $\beta$  has exactly one element in common with $e_i$ different  cycles of  $\alpha$, 
 for $i=1,\dots,s$ and all cycles of $\alpha$ are attained. \qed
\end{Cor}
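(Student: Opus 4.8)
The plan is to read the corollary off directly from the cycle count carried out in the proof of Lemma \ref{d-ciclo}, upgrading each inequality used there into an equivalence. Throughout write $c(\pi)$ for the number of cycles of a permutation $\pi$ (fixed points included), and decompose $\beta=\beta_1\cdots\beta_s$ into its nontrivial cycles, $\beta_i$ being the $e_i$-cycle. The single combinatorial input I need is the classical fact about multiplication by a cycle: if $\sigma$ is an $e$-cycle with support $S$ and $\pi$ is arbitrary, then
\[
c(\pi\sigma)\ \geq\ c(\pi)-(e-1),
\]
and equality holds \emph{if and only if} $S$ meets $e$ distinct cycles of $\pi$, necessarily in exactly one point each, in which case those $e$ cycles are fused into a single cycle of $\pi\sigma$. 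This follows by writing $\sigma$ as a product of $e-1$ transpositions and using that one transposition changes the cycle count by exactly $\pm 1$, merging two cycles when its two points lie in distinct cycles and splitting a cycle otherwise; the maximal drop $e-1$ forces every transposition step to be a merge, which is exactly the ``one point in each of $e$ distinct cycles'' condition.

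First I would record the bookkeeping. Since $\alpha\in D_1$ has $r+\ell_1$ cycles (the $r$ nontrivial ones together with the $\ell_1$ fixed points), Remark \ref{numberrestr}(d) gives $c(\alpha)=r+\ell_1=\nu(D_2)+1=\sum_{i=1}^{s}(e_i-1)+1$. Applying the displayed inequality successively to $\alpha,\ \alpha\beta_1,\ \dots,\ \alpha\beta_1\cdots\beta_{s-1}$ and telescoping gives
\[
c(\alpha\beta)\ \geq\ c(\alpha)-\sum_{i=1}^{s}(e_i-1)\ =\ 1 .
\]
By Lemma \ref{d-ciclo}, $\langle\alpha,\beta\rangle$ is transitive precisely when $\alpha\beta$ is a $d$-cycle, i.e. when $c(\alpha\beta)=1$, which is exactly the case in which the telescoped bound is attained. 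Hence transitivity is equivalent to \emph{every one} of the $s$ intermediate inequalities being an equality.

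By the equality clause above, the $i$-th inequality is an equality exactly when $\mathrm{Supp}\,\beta_i$ meets $e_i$ distinct cycles of the partial product $\alpha\beta_1\cdots\beta_{i-1}$, one element in each; since the first step concerns the cycles of $\alpha$ itself and each later merge only combines cycles already assembled from cycles of $\alpha$, this is precisely the assertion that each $e_i$-cycle of $\beta$ shares exactly one element with $e_i$ different cycles of $\alpha$, read in the order of the $\beta_i$. Finally, the equality $c(\alpha\beta)=1$ cannot leave any cycle of $\alpha$ untouched: a cycle of $\alpha$ disjoint from $\mathrm{Supp}\,\beta$ would pass unchanged through all the products and contribute a second cycle to $\alpha\beta$, so $c(\alpha\beta)=1$ forces \emph{all cycles of $\alpha$ to be attained}; conversely this coverage is what makes the successive merges exhaust $\{1,\dots,d\}$. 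Reading these two consequences together yields the stated ``if and only if.''

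I expect the delicate point to be the precise meaning of ``$e_i$ different cycles of $\alpha$,'' and this is where a careful write-up must do its work. The equality analysis is intrinsically \emph{sequential}: the relevant cycles at step $i$ are those of the running product $\alpha\beta_1\cdots\beta_{i-1}$, not of $\alpha$ alone. Merely requiring each $\beta_i$ to meet $e_i$ distinct cycles of $\alpha$ together with full coverage is necessary but not self-evidently sufficient, because two elements of some later $\beta_i$ could land in a single cycle already produced by earlier merges, breaking that step's equality while still touching distinct cycles of $\alpha$. The main task is therefore to fix the order of the $\beta_i$ and track the partial products, exactly as in the proof of Lemma \ref{d-ciclo}; once that ordering is made explicit, the equivalence is immediate from the equality case of the multiplication-by-a-cycle fact and from the coverage argument above.
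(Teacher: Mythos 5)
Your proof is correct and is essentially the paper's own argument: the paper reads the corollary off the cycle count in the proof of Lemma \ref{d-ciclo}, and your telescoped bound $c(\alpha\beta)\geq c(\alpha)-\sum_{i=1}^{s}(e_i-1)=1$, together with the equality case of multiplication by an $e$-cycle, is exactly that count made explicit. The delicate point you flag at the end is genuine and is the right thing to insist on: under the non-sequential reading the ``if'' direction literally fails --- for $d=9$ and $\mathscr{D}=\{[3,2,2,2],[2,2,2,1,1,1]\}$ (so $\nu(\mathscr{D})=8=d-1$), the permutations $\alpha=(1\,2\,3)(4\,5)(6\,7)(8\,9)$ and $\beta=(1\,4)(2\,5)(6\,8)$ have each $e_i$-cycle of $\beta$ meeting $e_i$ distinct cycles of $\alpha$ exactly once with all cycles of $\alpha$ attained, yet $\langle\alpha,\beta\rangle$ has the two orbits $\{1,\dots,5\}$ and $\{6,\dots,9\}$ --- so the sequential condition through the partial products $\alpha\beta_1\cdots\beta_{i-1}$, which is how the proof of Lemma \ref{d-ciclo} and your equality analysis actually run, is the intended and correct formulation of the statement.
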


 \subsubsection{Co-existence of decomposable and indecomposable realizations}\label{DI}
 Recall that  Proposition \ref{nt-minimal} implies that a datum   does not admit an algebraic factorization  if and only if  it does not admit a decomposable realization. Since  decomposable and indecomposable realizations could coexist, let us focus on the question:
 
\vspace{3pt}
\noindent   {\it Which data, in the class of decomposable ones,   are also realizable by indecomposable branched coverings?  }\\

\vspace{-10pt}
\noindent Let us start by studying the case   $\mathscr{D}\neq\{[2,\dots,2,1],[2,\dots,2,1]\}$, with  $\nu(\mathscr{D})=d-1$ and satisfying (\ref{Dis}) and (\ref{D1D2}), i.e.:
\vspace{-5pt}
\begin{equation*}
\begin{array}{ll}
D_{i} \neq [d] \textrm{ for } i=1,2 
 \textrm{ and } \nu(D_{1})\geq \nu(D_{2}),\\
D_1=[c_1,c_2,\dots,c_r,1,\dots,1 ],  \textrm{ with }  c_{i-1}\geq c_i>1 \textrm{ and }  \ell_1 \textrm{ ones,  and }\\
     D_2=[e_1,e_2,\dots,e_s,1,\dots,1], \textrm{ with } e_{j-1} \geq e_j>1 \textrm{ and } \ell_2 \textrm{ ones.} 
     \end{array}
     \end{equation*}

\begin{Lemma}\label{3cycles0} 
Let $d \in \mathbb{Z}^{+}$ be an odd  integer, $\mathscr{D}=\{D_{1},D_{2}\}$ a collection of partitions of $d$ satisfying (\ref{Dis}) and (\ref{D1D2})
    with $\nu(\mathscr{D})=d-1$ and $D_1\ne [2,\dots,2,1]$. Then:
    \vspace{-7pt}
    \begin{enumerate}[noitemsep, leftmargin=15pt]
  \item  If   $\ell_1=0$    and $D_1$  contains summands $x ,\, y$ such that $x> y$  then there exist $\alpha \in D_1$, 
    $\beta \in D_2$ such that  $\alpha \beta \in [d-2y,y,y]$;
 \item If $\ell_1>0$  then 
   $D_1$  contains a  summand $x >2$,  and 
    there exist $\alpha \in D_1$, $\beta \in D_2$ such that $\alpha \beta \in [d-2,1,1]$.
    \end{enumerate}
      \vspace{-3pt}
    Further,  there exists a permutation $\omega$ such that $\langle \alpha, \beta, \omega| \omega^2=\alpha \beta \rangle$ is a transitive permutation group.
    \end{Lemma}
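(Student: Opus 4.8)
The plan is to use Proposition \ref{2.1-EKS}: realizing $\mathscr{D}$ by a (possibly imprimitive) branched covering is the same as producing $\alpha\in D_1$, $\beta\in D_2$ and $\omega\in\Sigma_d$ with $\omega^2=\alpha\beta$ such that $\langle\alpha,\beta,\omega\rangle$ is transitive. My first observation is that \emph{both} target cycle structures are automatically squares: since $d$ is odd, $d-2$ and $d-2y$ are odd, so $[d-2,1,1]$ has only odd-length cycles, while $[d-2y,y,y]$ consists of an odd $(d-2y)$-cycle together with an even number (namely two) of $y$-cycles. In either case every even cycle length occurs an even number of times, so a square root $\omega$ always exists once the product is realized. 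Hence the real content of the ``Further'' clause is not the bare existence of $\omega$, but choosing $\alpha,\beta$ with the prescribed product type \emph{together with} a square root $\omega$ that makes the whole group transitive.

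Next I would reduce transitivity to a precise orbit structure. Since $\alpha\beta\in\langle\alpha,\beta\rangle$, each cycle of $\alpha\beta$ lies inside a single orbit of $\langle\alpha,\beta\rangle$; and by Lemma \ref{d-ciclo} the group $\langle\alpha,\beta\rangle$ is never transitive here, because $\alpha\beta$ is not a $d$-cycle. I will use $\omega$ to repair transitivity by gluing the two ``short'' pieces of $\alpha\beta$: for $[d-2,1,1]$ take $\omega$ to be an odd square root of the $(d-2)$-cycle times the transposition of the two fixed points, so $\omega\in[d-2,2]$; for $[d-2y,y,y]$ take $\omega$ to be an odd square root of the $(d-2y)$-cycle times a \emph{single} $2y$-cycle squaring to the two $y$-cycles, so $\omega\in[d-2y,2y]$. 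In both cases $\omega$ merges the two short blocks into one orbit. Therefore it suffices to construct $\alpha,\beta$ so that $\langle\alpha,\beta\rangle$ has \emph{exactly two} orbits: the big cycle together with one short block forming one orbit, and the remaining short block forming the other. Then $\omega$ glues these two orbits and $\langle\alpha,\beta,\omega\rangle$ becomes transitive.

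It then remains to build $\alpha,\beta$ with this orbit structure. In case (2) I first record that $D_1$ has a summand $>2$: if every $c_i=2$, then $\nu(D_1)=r$ and $d=2r+\ell_1$, and $\nu(D_1)\ge(d-1)/2$ together with $\ell_1>0$ forces $\ell_1=1$, i.e.\ $D_1=[2,\dots,2,1]$, which is excluded. I then isolate a single point: choosing a fixed point $q$ of $\alpha$ (available since $\ell_1>0$) and letting $\beta$ fix $q$ as well (available since $\ell_2\ge1$) makes $\{q\}$ an orbit. I reserve the last letter $a_x$ of an $x$-cycle with $x\ge3$ to be the second fixed point $p$ of the product, forcing $\beta(a_1)=a_x$, and I thread the remaining non-trivial cycles of $\beta$ through all the remaining cycles of $\alpha$ in the manner of Corollary \ref{b_i}, so that on the $d-1$ points distinct from $q$ the product equals a $(d-2)$-cycle plus the fixed point $p$, while $\langle\alpha,\beta\rangle$ is transitive there. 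In case (1), Lemma \ref{s<a} gives $\ell_2\ge c_i\ge y$, so $\beta$ has enough fixed points to fix an entire $y$-cycle $C=(b_1\cdots b_y)$ of $\alpha$, isolating the orbit $C$; on the remaining $d-y$ points I use the longer $x$-cycle ($x>y$) to split off an extra $y$-cycle $B$ and route everything else into one $(d-2y)$-cycle, again making $\langle\alpha,\beta\rangle$ transitive there. In each construction the two-orbit count is consistent with the bookkeeping that the number of cycles of $\alpha$, of $\beta$ and of $\alpha\beta$ sum to $d+4$ (so the associated covering of the sphere has Euler characteristic $4$, i.e.\ two genus-zero pieces).

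The main obstacle is exactly this threading step: arranging the cycles of $\beta$ so as to meet \emph{simultaneously} the three requirements that $\beta\in D_2$, that $\alpha\beta$ has the exact prescribed cycle type with the designated fixed points (resp.\ isolated $y$-cycle), and that $\langle\alpha,\beta\rangle$ restricts to a transitive group on the big orbit. As in the proof of Lemma \ref{d-ciclo} this is a connectivity argument, but now carried out in the presence of two exceptional points, and it requires a case analysis according to the fine shape of $D_1$ and $D_2$ (how many, and how long, the available non-trivial cycles are). The hypotheses $x>2$ in case (2) and $x>y$ in case (1) are precisely what guarantee enough room to perform the splitting while preserving transitivity; this casework is the bulk of the argument.
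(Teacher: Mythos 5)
Your proposal is correct and follows essentially the same route as the paper: you isolate one $y$-cycle (resp.\ fixed point) of $\alpha$ inside ${\rm Fix}\,\beta$, thread the remaining cycles of $\beta$ through the other cycles of $\alpha$ \`a la Corollary \ref{b_i} while splitting the $x$-cycle at distance $y$ to force $\alpha\beta\in[d-2y,y,y]$, and then take $\omega$ to be an odd-power square root of the long cycle times a $2y$-cycle interleaving the two $y$-cycles, which links the two orbits of $\langle\alpha,\beta\rangle$ exactly as in the paper's final paragraph. The threading casework you defer (on $s$, $e_s$, and the shape of $D_1$) is precisely the content of the paper's explicit construction, and the auxiliary facts you invoke --- $\ell_2\geq y$ via Lemma \ref{s<a}, and the summand $x>2$ when $\ell_1>0$ --- are the same ones the paper uses.
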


   \begin{proof} 

\noindent $1)$ If  $\ell_1=0$ and $D_1$  contains summands $x ,\, y$ such that $x> y$: \\
 Since $c_{j}>1$, for $j=1,\dots,r$ then $x>y\geq 2$. 
Relabeling the summands of $D_{1}$ we write: \\
\centerline{$D_{1}=[x,\kappa_1,\dots,\kappa_{r-2},y],$}\\
 with $\kappa_{1} \geq \dots \geq  \kappa_{r-2}$. 
Let $\alpha=\gamma_{\rm x}\,\alpha_1\,\dots\,\alpha_{r-2} \,\gamma_{\rm y} \in D_{1}$ be an arbitrary permutation with the cyclic structure given by $D_{1}$,  
where $\gamma_{\rm x}$ is a $x$-cycle, $\gamma_{\rm y}$ is a $y$-cycle and $\alpha_i$ is    a $\kappa_{i}$-cycle for $i=1,\dots,r-2.$
We will define $\beta=\beta_1\dots\beta_s \in D_2=[e_{1},e_{2},\dots,e_{s},1,\dots,1]$, with  $\beta_i$  a $e_i$-cycle,  such that:  

\vspace{3pt}
\noindent i)  
If   $s=1$ then  $\beta=\beta_1$ and  $e_1=r$, because $\nu(\mathscr{D})=d-1$.  Suppose $\gamma_{\rm x}=(z_{{\rm x}_{1}} \dots z_{{\rm x}_{x}})$ and   define:
\begin{eqnarray*}
  \beta=
  \begin{cases}
  ( z_{{\rm x}_{1}}^{(\gamma_{\rm x})^y}\; z_{{\rm x}_{1}}), \textrm{ if $r=2$}\\
( z_{{\rm x}_{1}}^{(\gamma_{\rm x})^y}\; z_{{\rm x}_{1}}\; z_{1_1} \dots\;z_{(e_{1}-2)_1}), \textrm{if $r>2$}
\end{cases}
\end{eqnarray*}
for 
 $z_{j_1} \in Supp \; \alpha_{j}$, 
  $j=1,\dots,e_{1}-2$.   Observe that ${\rm Supp}\,\gamma_{\rm y} \subset {\rm Fix}\, \beta$.
  Then we have
 \begin{eqnarray*}
\alpha \beta=
\begin{cases}
(\underbrace{z_{{\rm x}_{1}}\dots z_{{\rm x}_{1}}^{(\gamma_{\rm x})^{y-1}}}_{y})(z_{{\rm x}_{1}}^{(\gamma_{\rm x})^{y}}\dots z_{{\rm x}_{x}})\gamma_{\rm y}, \textrm{ if $r=2$},\\
(\underbrace{z_{{\rm x}_{1}}\dots z_{{\rm x}_{1}}^{(\gamma_{\rm x})^{y-1}}}_{y})(z_{{\rm x}_{1}}^{(\gamma_{\rm x})^{y}}\dots z_{{\rm x}_{x}}\; \iorarrow[\alpha_1]{z_{1_{1}}} \, \dots\, \iorarrow[\alpha_{e_1-2}]{z_{(e_{1}-2)_1}} ) \gamma_{\rm y}, \textrm{ if $r>2$}
\end{cases}
\end{eqnarray*}
where   $\iorarrow[\alpha_{\bullet}]{z_{\bullet_1}}$ denote  the sequence  determined by the cycle $\alpha_{\bullet}$ 
starting in 
$z_{\bullet_1}$. Hence $\alpha \beta$ is  a product of three disjoint cycles, where two of them have length $y$.

\vspace{2pt}
\noindent    ii)
 If $s>1$, let us first define  
     $\beta_i$, for $i=1,\dots,s-1$.  The first equality in  (\ref{e})  suggests to define  $s-1$  subsets $\Lambda_1,\dots, \Lambda_{s-1}$  of $\{x,\kappa_1,\dots,\kappa_{r-2}\}$ (the list of  summands of $D_1$ without $y$)  such that $|\Lambda_j|=e_j$, $|\Lambda_i \cap \Lambda_j|\leq 1$  if  $i \neq j$, for $i,j=1, \dots, s-1$.
    Set:\\
 \vspace{4pt}   
 \centerline{$\Lambda_1=\{x,\kappa_1,\dots,\kappa_{e_1-1}\}$.}\\
 The cycles of $\alpha$ associated to the elements in  $ \Lambda_1$ are $\{\gamma_{\rm x}, \alpha_{1}, \dots, \alpha_{{e_1-1}}\}$ and
  if  $\beta_{1}=(z_{{\rm x}_1}\, z_{1_1} \dots z_{(e_{1}-1)_1})$
 with  $z_{{\rm x}_1}\in  Supp \; \gamma_{\rm x}$, 
 $z_{j_1} \in Supp \; \alpha_{j}$, 
 for $j=1,\dots,e_{1}-1$, 
then $\Gamma:=\gamma_{\rm x}\, \alpha_{1}\, \dots \, \alpha_{{e_1-1}} \beta_1$ is a $(x+\sum_{i=1}^{e_1-1} \kappa_i)$-cycle, from proof of Lemma \ref{d-ciclo}. In fact, if $\gamma_{\rm x}=(z_{{\rm x}_1}\;z_{{\rm x}_2} \dots z_{{\rm x}_x})$ and $\alpha_{j}=(z_{j_1}\;z_{j_2} \dots z_{j_{\kappa_j}})$ for $j=1,\dots, e_1-1$ then 
   $$\Gamma=\Big(\underbrace{z_{{\rm x}_1}\;z_{{\rm x}_2} \dots   z_{{\rm x}_x}}_{\iorarrow[\gamma_{\rm x}]{z_{{\rm x}_{1}}}}\; \underbrace{z_{1_1} \; z_{1_2} \dots z_{1_{\kappa_1}}}_{\iorarrow[\alpha_1]{z_{1_{1}}}} \; \underbrace{z_{2_1} \; z_{2_2} \dots z_{2_{\kappa_2}}}_{\iorarrow[\alpha_2]{z_{2_{1}}}}\dots \underbrace{z_{{(e_1-1)}_1} \; z_{{(e_1-1)}_2} \dots z_{{(e_1-1)}_{\kappa_{(e_1-1)}}}}_{\iorarrow[\alpha_{(e_1-1)}]{z_{{(e_1-1)}_{1}}}}\Big).$$
   Setting $*=z_{{\rm x}_{1}}^{{(\gamma_{\rm x})^{x-(y-1)}}}$ and $\star=z_{1_{2}}$ we have:
   \begin{small}
   \begin{equation}\label{Gama}
   \hspace*{-0.95cm}
   \begin{array}{ll}
   \Gamma=\Big(z_{{\rm x}_1}\;\underbrace{z_{{\rm x}_1}^{{\gamma_{\rm x}}} \dots z_{{\rm x}_{1}}^{{(\gamma_{\rm x})^{x-y}}}}_{\geq 1} \; *\; \underbrace{z_{{\rm x}_{1}}^{{(\gamma_{\rm x})^{x-(y-2)}}} \dots  z_{{\rm x}_{1}}^{{(\gamma_{\rm x})^{x-1}}}\; z_{1_1} \; \star}_{y}  \dots  z_{1_{\kappa_1}} 
   \dots z_{{(e_1-1)}_1}   \dots z_{{(e_1-1)}_{\kappa_{e_1-1}}}\Big )
   \end{array}
   \end{equation}
   \end{small}
   where the sequence $z_{{\rm x}_{1}}^{{(\gamma_{\rm x})^{x-(y-2)}}}  \dots  z_{{\rm x}_{1}}^{{(\gamma_{\rm x})^{x-1}}}$ will be empty if $*=z_{{\rm x}_{1}}^{{(\gamma_{\rm x})^{x-1}}}$, or equivalently if $y=2$.  Hence
   $\star=*^{\Gamma^{y}}$ with
   $\{*,  \star \}\subset Supp\; \Gamma\setminus Supp\; \beta_{1}.$
   
\noindent    If $s>2$,  for  $j=1,\dots,s-1$ we define $p_j:=\sum_{l=1}^j e_l$ and 
for $2\leq j+1\leq s-1$ let\\
 \vspace{4pt}   
    \centerline{$ \Lambda_{j+1}=\{\kappa_{p_j-1}, \kappa_{p_j}, \dots, \kappa_{p_{j+1}-1 }\}.$}

  \noindent       Let $\beta_i$ be  an $e_i$-cycle  for $i=1,\dots,s-1$
     such that   ${\rm Supp}\,\beta_i$ intersects exactly once each cycle   of $\alpha$    associated to the elements in $\Lambda_i$ in such a way that  ${\rm Supp}\,\beta_i \cap {\rm Supp}\,\beta_j=\emptyset$, for $i \neq j$. Since $\kappa_j \geq 2$ for $j=1,\dots, r-2$,  we  have enough ``not used" elements in the complement of $\{*, *^{\Gamma},*^{\Gamma^{2}},\dots, \star \}$ (see (\ref{Gama})) to define $\beta_i$, for $i=2,\dots, s-1.$ 
     
     Let us define $q:=e_1+(e_2-1)+\dots+(e_{s-1}-1)$.   
By the proof of Lemma \ref{d-ciclo}, we have that:
 $ \Gamma \, \alpha_{e_1} \dots \alpha_{q-1} \,\beta_1\dots\beta_{s-1}$
is a $(x+\sum_{j=1}^{q-1} \kappa_j)$-cycle, with
 $\sum_{i=1}^{s-1}e_i$ elements in ${\rm Supp}\, \beta_1 \dots \beta_{s-1}.$ 

\noindent To define $\beta_s$ notice, from the first equality in (\ref{e}), that:
\begin{equation*}
\alpha \beta_{1}\dots\beta_{s-1}=
\begin{cases}
 \Gamma \gamma_{\rm y},& \textrm{ if $e_s=2$,}\\
\underbrace{\Gamma \, \alpha_{e_1} \dots \alpha_{q-1} \,\beta_1\dots\beta_{s-1}}_{\textrm{ $(x+\sum_{j=1}^{q-1} \kappa_j)$-cycle}} \alpha_{q}\;\dots\;\alpha_{r-2}\; \gamma_{\rm y},& \textrm{ if $e_s>2$.}
\end{cases}
 \end{equation*}
If $e_s=2$, we define $\beta_s=(\ast\, \star)$, see (\ref{Gama}).
       Then it follows that $\alpha\beta=\alpha \beta_1\cdots \beta_{s-1}\beta_s=\Gamma \gamma_{\rm y}\beta_s$
      which is a product of 2 cycles of length $y$ and one of length $d-2y$, because $\beta_s$ breaks  the $(d-y)$-cycle $\Gamma $ in two cycles, one of length $y$ and one of length $d-2y$.

  \noindent  If $e_s>2$,
  define $\flat:=(\ast  \; z_{s_{q}}\dots z_{s_{(r-2)}})$ with
  $z_{s_{i}} \in Supp \;\alpha_{i} $, for $i=q,\dots,r-2$.
Thus,  \\
\centerline{$\alpha \beta_{1} \dots \beta_{s-1}\;\flat =\Delta \gamma_{\rm y},$}\\
 where $\Delta$  is a $(d-y)$-cycle   by the proof of  Lemma \ref{d-ciclo} and Corollary \ref{b_i}.  Hence, we define $\beta_{s}:=\flat\; (*\;\star)$ and
 we have:\\
\centerline{$\alpha\beta= \alpha \beta_{1}\dots \beta_{s-1}\beta_{s}=(\star \; \star^{\Delta} \dots \star^{\Delta^{d-2y-1}})
(* \; *^{\Delta}
\dots *^{(\Delta)^{y-1}}) \gamma_{\rm y},$}\\
%\begin{eqnarray}\label{lb}
%\alpha\beta= \alpha \beta_{1}\dots \beta_{s-1}\beta_{s}=(\star \; \star^{\Delta} \dots *^{(\Delta)^{-1}})
%(* \; *^{\Delta}
%\dots *^{(\Delta)^{y-1}}) \gamma_{y},
%\end{eqnarray}
as a product of three disjoint cycles, where two of them have length $y$.

\noindent $2)$ If $\ell_1>0$: since $\nu(D_1) \geq (d-1)/2$, then  $D_1$  contains a  summand $x >2$. In this case we choose  $y=1$.
Notice that $d=\ell_1+ \sum_{j=1}^r c_j= \sum_{k=1}^{s-1}e_k + e_s+ \ell_2,$ and since  $\ell_2>0$ then:  \\
\centerline{
$(\ell_1-1)+\sum_{j=1}^r c_j- \sum_{k=1}^{s-1}e_k=e_s+(\ell_2-1) \geq 2$.}\\
 This, together with $x \geq 3 $, guarantees that
   we can define $\beta$   in an analogous way to the previous case but choosing $\star=\ast^{\gamma_{\rm x}}$. Then   $\{\ast, \ast^{\gamma_{\rm x}}\} \subset {\rm Supp}\, \gamma_{\rm x} \cap {\rm Fix}\, (\beta_1 \dots \beta_{s-1})$
and $\alpha \beta \in [d-2,1,1]$.  
 
%\textcolor{blue}{
For the further part, from the constructions above, we note that whatever the case, the group $\langle \alpha, \beta \rangle$ defines two orbits on $\{1,\dots,d\}$: ${\rm Supp}\,\gamma_{\rm y}$ and $\{1,\dots,d\}\setminus {\rm Supp}\,\gamma_{\rm y} $. \\To define $\omega$ we use the structure of   $\alpha \beta= \pi_1\pi_2\gamma_{\rm y} \in [d-2y,y,y]$, where $\pi_1$ is  a $(d-2y)$-cycle and $\pi_2,\gamma_{\rm y}$ are $y$-cycles.  Since $d$ is odd,  $d-2y$ is odd and $\pi_1^{(d-2y+1)/2}$ is a $(d-2y)$-cycle. With $\pi_2$ and $\gamma_{\rm y}$ we define the $2y$-cycle $\Delta$ such that $\Delta^2=\pi_2\gamma_{\rm y}$.  Set $\omega :=\pi_1^{(d-2y+1)/2} \Delta$ then $\omega^2=\alpha \beta.$ Since $\omega$ links the two orbits mentioned above,  the permutation group $\langle \alpha, \beta, \omega| \omega^2=\alpha \beta \rangle$ is transitive.
%}

  Hence the  lemma is proved.
\end{proof}
%\textcolor{green}{!!!D!!!March16 Let us discuss if what is written here in blue reflect what we want to say}
 From a pair of partitions, in a 
large family of pairs of partitions,  the result above provide    
% for ,  from a large family of pairs of permutations, 
a permutation with a cyclic structure of the type $[d-2y,y,y]$, that is:\\
$\bullet$  the square of a permutation,\\
$\bullet$ primitive  if $\gcd(y,d)$=1 for $d$ odd, see Remark \ref{GEN1}. 
\begin{Prop}\label{3cycles-primitive}
Let $d \in \mathbb{Z}^{+}$ be an odd  integer and $\mathscr{D}=\{D_{1},D_{2}\}$ be  a branch datum
 with $\nu(\mathscr{D})=d-1$ and $\nu(D_1)\geq \nu(D_2)$. If   $D_1$  contains summands $x ,\, y$ such that $x>{\rm max}\{y,2\}$ and $\gcd(y,d)=1$ then  there exist $\alpha \in D_1, \beta \in D_2$ such that the square root  $\sqrt{\alpha \beta}$ exists and  $\langle \alpha, \beta, \sqrt{\alpha \beta} \rangle$ is a primitive permutation group.
 \end{Prop}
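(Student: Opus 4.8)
The plan is to obtain the proposition as a short corollary of Lemma \ref{3cycles0} and Lemma \ref{gcd}, which between them already contain all the real work. First I would check that the hypotheses of those lemmas are met. The assumption that $D_1$ contains summands $x,y$ with $x>\max\{y,2\}$ forces, on the one hand, two distinct values among the parts of $D_1$, so $D_1\neq[d]$; and on the other hand a part $x>2$, so $D_1\neq[2,\dots,2,1]$. Thus the two partition shapes excluded from Lemma \ref{3cycles0} are automatically ruled out, the standing assumptions (\ref{Dis}) and (\ref{D1D2}) hold (with $\nu(D_1)\geq\nu(D_2)$ given), and Lemma \ref{3cycles0} is applicable.

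Next I would split into the two cases governing Lemma \ref{3cycles0}. If $\ell_1=0$, then every part of $D_1$ is at least $2$, so the supplied summand satisfies $2\leq y<x$, and part (1) of Lemma \ref{3cycles0} yields $\alpha\in D_1$, $\beta\in D_2$ with $\alpha\beta\in[d-2y,y,y]$. If $\ell_1>0$, I instead take $y=1$: this is a genuine part of $D_1$, part (2) of Lemma \ref{3cycles0} guarantees the required summand $x>2$, and it produces $\alpha\in D_1$, $\beta\in D_2$ with $\alpha\beta\in[d-2,1,1]$. In either case $\alpha\beta$ has cyclic structure $[d-2c,c,c]$ with $c=y$ and $\gcd(c,d)=1$ (given for $y\geq 2$, and trivial for $y=1$).

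Then I would invoke the \emph{further part} of Lemma \ref{3cycles0}, which constructs an explicit permutation $\omega$ with $\omega^2=\alpha\beta$; this exhibits $\omega=\sqrt{\alpha\beta}$ and hence shows the square root exists. The same statement gives that $\langle\alpha,\beta,\omega\rangle$ is transitive on $\{1,\dots,d\}$. Since $\alpha\beta=\omega^2$ lies in this group and has cyclic structure $[d-2c,c,c]$ with $\gcd(c,d)=1$ and $d$ odd, Lemma \ref{gcd} applies verbatim and concludes that $\langle\alpha,\beta,\sqrt{\alpha\beta}\rangle=\langle\alpha,\beta,\omega\rangle$ is primitive, which is exactly the assertion.

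The essential content, namely the explicit product realizing a $[d-2y,y,y]$-structure together with the transitive square-root generator, has already been carried out inside Lemma \ref{3cycles0}, and the primitivity criterion is Lemma \ref{gcd}; so no genuinely new computation is needed here. The only points demanding care are bookkeeping: verifying that the hypothesis removes the forbidden shapes $[d]$ and $[2,\dots,2,1]$ so the lemmas may be cited, and noticing that when $\ell_1>0$ one is obliged to use $c=y=1$ rather than the $y$ furnished by the hypothesis, because part (2) of Lemma \ref{3cycles0} delivers only the structure $[d-2,1,1]$. Once these are addressed the proof is immediate.
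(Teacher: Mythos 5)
Your proof is correct and follows essentially the same route as the paper's own proof, which likewise derives the proposition directly from Lemma \ref{3cycles0} (supplying the product $\alpha\beta$ of structure $[d-2y,y,y]$ together with the transitive square root $\omega$) combined with the primitivity criterion of Lemma \ref{gcd}. Your bookkeeping is in fact more explicit than the paper's two-line argument: you verify that the hypotheses exclude the forbidden shapes $[d]$ and $[2,\dots,2,1]$, and you point out that when $\ell_1>0$ one must apply Lemma \ref{gcd} with $c=1$ (since part (2) of Lemma \ref{3cycles0} only yields $[d-2,1,1]$), a case distinction the paper glosses over by writing $[d-2y,y,y]$ uniformly.
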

\begin{proof}
From Lemma \ref{3cycles0}, there exist $\alpha \in D_1$, $\beta \in D_2$ such that $\alpha \beta \in [d-2y,y,y]$. Since $gcd(y,d)=1$ then, from Lemma \ref{gcd}, the group $\langle \alpha, \beta, \sqrt{\alpha \beta}\rangle$ is primitive.
\end{proof}

%{\bf !!!D!!!Sep/22 Versao revisada com apenas  pequenos detalhes de redacao em 3 ponots. Sem comentario em negrito }

%\textcolor{green}{
%\noindent {\bf Notation:} Given a permutation $\alpha \in D$, we set up that  $\gcd(\alpha):=\gcd(D)$. Remember that
%if $D=[k_1,\dots,k_r]$ then  $\gcd (D)=\gcd\{k_1,\dots,k_r\}.$  \\
Notice that the data $\mathscr{D}=\{D_{1},D_{2}\}$
 with $\nu(\mathscr{D})=d-1$ that do not satisfy the hypothesis of  Proposition \ref{3cycles-primitive} are  either:
% {\bf  !!!D!!!Sep/18 D\'uvida: Tome a parti\c c\~ao:$\{6,3,3,3,\}, \{3,2,2,1,1,1,1,1,1,1,1\}$. Me parece que o dado  nao satisfaz as hipotese da Prop. 2.9 pois so temos 
% para $D_1$ x=6 e y=3 mas $gcd(15,3)\ne 1$. Porem a parti\c c\~ao acima n\~ao satisfies   1) abaixo e  nao satisfies 2 abaixo. Vejamos 3. O item 3 diz que 
% devemos olhar os pares $(x,y)$ em    $D_2$. Portanto x=3 e y=1 satisfaz $x>max\{2,y\}$ mas $gcd(3,1)=1$ logo este par nao satisfaz 3. Pergunto 
% devemos  escrever ao inves de "in $D_1$ or $D_2$," apenas  "in $D_1$,"?  Se sim veja o comentario depois dos itens.}
% \textcolor{orange}{
\begin{enumerate}[noitemsep, leftmargin=15pt]
\item   the summands of $D_1$  are all less than $3$. In this case $\nu(D_1)\leq (d-1)/2$. Since $\nu(\mathscr{D})=d-1$ and $\nu(D_1)\geq \nu(D_2)$ then $\nu(D_1)=(d-1)/2$ and  $D_1=D_2=[2,\dots,2,1],$ or
 \item   $D_1=[k,\dots,k]$  with $k> 2$. In this case  $\gcd(D_1)=k,$ or
 \item   for all  pair of summands $x,y$ with  $x>{\rm max}\{y,2\}$,  in $D_1$,
 % \textcolor{red}{or $D_2$}, 
  we have $\gcd(y,d)\neq 1.$ In this case the summands of $D_1$ are all $\geq 3$.
 \end{enumerate}

% {\bf !!!D!!!Sep/18 Assumindo que na reda\c c\~ao acima cancelemos o $D_2$, me parece que o  item 2 esta contido no item 3. 
% A menos que o item 3 seja substituido pelo item 3':} \textcolor{green}{for all  pair of summands $x,y$ with  $x>{\rm max}\{y,2\}$,  in $D_1$, where the set of such pairs 
% $(x,y)$ is not empty,  we have $gcd(y,d)\neq 1.$ In this case the summands of $D_1$ are all $\geq 3$.
% {\bf !!!D!!! In the next lemma it is written that   $\gamma(D_{i})=1$ for $i=1,2$, but it is not in Lemma 2.7, Lemma 2.8 and Proposition 2.9. 
%Do we need these hypotheses for the statements just mentioned?}
% \textcolor{green}{
\noindent Let us  study   the case where $\gcd(D_1)=\gcd(D_2)=1$, for  $\mathscr{D}\neq\{[2,\dots,2,1],[2,\dots,2,1]\}$.
%\textcolor{blue}{N: A seguir comparo nosso caso com um caso estudado por P. Lopes, s\'o para esclarecer que %estamos considerando uma familia maior. Acho que poderiamos deixar }
%\noindent  \textcolor{red}{???????????????
Notice that the family of partitions $D$ with $\gcd(D)=1$ is larger than the family of relatively prime partitions defined in \cite{Lo}: { \it a partition of $d$ into distinct integers, $[n_1,\dots,n_l]$, is called
a relatively prime partition of $d$ if the parts are mutually prime  i.e., $(n_t,n_{t'})=1$ for each $t \neq t' \in \{1,2,\dots,l\}$}. For these they show that a particular subfamily is realized only by primitive permutations, i.e.  they are not decomposable. In this section,  we are working with partitions   in the complement  of this particular subfamily.

  From Remark \ref{numberrestr} item b) we observe  that we already have $\gcd(D_2)=1$. So the  cases to be considered  will depend on the value 
  $\gcd(D_1)$.

\begin{Lemma}\label{C123} Let $d \in \mathbb{Z}^{+}$ odd  and $\mathscr{D}=\{D_{1},D_{2}\}$  a collection of partitions of $d$ with $\nu(\mathscr{D})=d-1$ and 
$\nu(D_1)\geq \nu(D_2)$. If  $\mathscr{D}\neq \{[2,\dots,2,1],[2,\dots,2,1]\}$,  $\gcd(D_{1})=1$ 
% \textcolor{red}{$\gamma(D_{i})=1$  for $i=1,2$}
 and  every summand of $D_1$ is at least $3$,
then for  $\alpha \in D_1$ there exists $\beta \in D_2$ such that $\langle \alpha, \beta \rangle$ is a primitive permutation group. 
\end{Lemma}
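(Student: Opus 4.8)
The plan is to produce, for the given $\alpha\in D_1$, a permutation $\beta\in D_2$ for which $\alpha\beta$ is a $d$-cycle \emph{and} the resulting group has no nontrivial block. The first thing to notice is that here, unlike in Proposition \ref{3cycles-primitive}, I must keep $\alpha\beta$ a full $d$-cycle: primitivity forces transitivity, and by Lemma \ref{d-ciclo} the group $\langle\alpha,\beta\rangle$ is transitive \emph{exactly} when $\alpha\beta$ is a $d$-cycle, so a permutation of type $[d-2c,c,c]$ is no longer an admissible target and Lemma \ref{gcd} cannot be applied to $\alpha\beta$ directly. Since every part of $D_1$ is at least $3$ we have $\ell_1=0$, so $D_1=[c_1,\dots,c_r]$ with all $c_i\ge 3$ and $\gcd\{c_1,\dots,c_r\}=1$; by Remark \ref{numberrestr}(b) we have $\ell_2\ge 1$, so $D_2$ has fixed points, and by Lemma \ref{s<a} each $c_i\le \ell_2$. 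These fixed points will give the slack needed to position the cycles of $\alpha$.

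I would first dispose of the trivial case. If $d$ is prime, every transitive subgroup of $\Sigma_d$ is automatically primitive (a block has size dividing $d$), so it suffices to produce any $\beta\in D_2$ with $\alpha\beta$ a $d$-cycle; the construction in the proof of Lemma \ref{d-ciclo}, governed by Corollary \ref{b_i}, does this for an arbitrary $\alpha$. From now on assume $d$ composite, and identify $\Omega$ with $\mathbb{Z}/d\mathbb{Z}$ through the $d$-cycle $\sigma:=\alpha\beta$, writing $\sigma(i)=i+1$. Because $\sigma$ is a single $d$-cycle, for each divisor $u\mid d$ the only $\langle\sigma\rangle$-invariant partition into $u$ blocks is the one by residue classes modulo $u$, and every block system of $\langle\alpha,\sigma\rangle=\langle\alpha,\beta\rangle$ has this shape; moreover $\langle\alpha,\beta\rangle$ admits the system with $u$ blocks if and only if $\alpha$ descends to a well-defined permutation of $\mathbb{Z}/u\mathbb{Z}$. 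Thus primitivity is equivalent to: for every divisor $u$ of $d$ with $1<u<d$, the map $\alpha$ fails to respect congruence modulo $u$. This is the criterion I would verify, and it turns the problem into one of threading the cycles of $\alpha$ along $\sigma$.

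The construction of $\beta$ is the heart of the matter, and this is where $\gcd(D_1)=1$ enters. I would thread the cycles $\gamma_1,\dots,\gamma_r$ of $\alpha$ into a single $d$-cycle $\sigma$ exactly as in the proofs of Lemma \ref{d-ciclo} and Lemma \ref{3cycles0} (each nontrivial cycle of $\beta$ meeting the prescribed $\gamma_i$'s once, all cycles of $\alpha$ attained), but choosing the cyclic order and the junction points so as to create, for each modulus $u$, two congruent indices whose $\alpha$-images are \emph{incongruent} modulo $u$. The mechanism is the one behind Lemma \ref{gcd}: if $\alpha$ were to descend modulo some $u$, each $\gamma_i$ would have to cover a whole number of residue classes inside the block-cycle it traverses, which forces a common divisor of the $c_i$ compatible with $u$; since $\gcd\{c_1,\dots,c_r\}=1$, this absorption/divisibility constraint cannot hold for all $i$ at once. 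The fixed points guaranteed by $\ell_2\ge 1$, together with the bound $c_i\le\ell_2$ of Lemma \ref{s<a}, provide exactly the freedom to break the alignment at a single junction while keeping $\alpha\beta$ a $d$-cycle.

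The step I expect to be the main obstacle is the \emph{simultaneity} in this last construction: breaking the congruence structure for one divisor $u$ is routine, but exhibiting a single $\beta$ that breaks it for every proper divisor of $d$ at once requires a uniform placement of the cycles of $\alpha$ and a number-theoretic argument showing that the collective condition $\gcd(D_1)=1$ — strictly weaker than pairwise coprimality, as stressed in the discussion preceding this lemma — already suffices. This is precisely the point where the present case is genuinely harder than the one settled by Proposition \ref{3cycles-primitive} via Lemma \ref{gcd}, and (consistently with the failure of the converse of Lemma \ref{gcd} recorded in Remark \ref{GEN1}) I would organize it by running the absorption argument prime-by-prime over the divisors of $d$ while keeping one fixed threading of the cycles of $\alpha$, finally invoking Proposition \ref{dixon} only implicitly through the block criterion above.
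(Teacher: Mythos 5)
Your setup is sound and matches the paper's first moves: reducing primitivity to the statement that, with $\Omega$ labelled along the $d$-cycle $\sigma=\alpha\beta$, the permutation $\alpha$ fails to respect congruence modulo every proper divisor $u$ of $d$ (this is the paper's Fact~1), and using the consecutiveness of the cycles of $\alpha$ inside $\sigma$ to absorb blocks (the paper's Facts~2--3, resting on its conditions (\ref{FixB}) and (\ref{consecutives})). But the heart of the proof is missing, and the mechanism you propose for it is false. You claim that if $\alpha$ descended modulo some $u$, then ``each $\gamma_i$ would have to cover a whole number of residue classes\dots which forces a common divisor of the $c_i$ compatible with $u$,'' so that $\gcd(D_1)=1$ rules this out. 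It does not: take $d=25$, $D_1=[5,5,3,3,3,3,3]$, $u=w=5$. Here $\gcd(D_1)=1$ and every part is at least $3$, yet there is $\alpha\in D_1$ preserving a system of $5$ blocks of size $5$ (the five $3$-cycles each meet three of the blocks once, and each $5$-cycle fills one block); this is exactly a product-partition structure $D_1=U_1.\mathscr{W}_1$ with $U_1=[3,1,1]$ and trivial $\mathscr{W}_{1_1}$. Note also that for this $u$ your congruence-breaking criterion is not obstructed by absorption either: every $c_i$ satisfies $c_i\le u$ or $\gcd(c_i,u)\neq 1$, so the consecutiveness argument (the analogue of Lemma \ref{gcd}) is powerless here. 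In other words, $\gcd(D_1)=1$ alone never prevents $\langle\alpha\rangle$ from having blocks; what must prevent $\langle\alpha,\beta\rangle$ from having them is a specific choice of $\beta$, and you never produce one.

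This is precisely where the paper does real work. It shows (its Facts~4--5) that under (\ref{FixB}) and (\ref{consecutives}) a surviving block system forces the factorization to have trivial second factor, i.e.\ $D_1=[u_{1_1},\dots,u_{1_1},u_{1_2},\dots,u_{1_2},\dots]$ with each $u_{1_i}$ repeated $w$ times and each block meeting each cycle of $\alpha$ at most once, with only equal-length cycles sharing a block. It then chooses $\beta$ (its Fact~6) to connect the cycles of $\alpha$ \emph{consecutively in weakly decreasing order of length}; since $\gcd(D_1)=1$ the sorted list has a strict drop, so exactly one cycle of $\beta$ joins two cycles of different lengths, hence joins two distinct blocks --- whereas mapping a block onto another block would require $w>1$ such crossing cycles. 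That counting argument, not a divisibility argument, is what kills the remaining block systems (and a final paragraph repeats it for every other admissible divisor $u'$, using that the same strict drop occurs). Your ``prime-by-prime absorption with one fixed threading'' does not substitute for this: for the divisors that matter (those compatible with an algebraic factorization of $\mathscr{D}$, which exists in general since the lemma's hypotheses do not exclude it), absorption gives nothing, and the burden falls entirely on the unconstructed threading. So the proposal has a genuine gap at the decisive step.
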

\begin{proof} 
If $\mathscr{D}$ does not admit an algebraic factorization, by Proposition \ref{nt-minimal} and Proposition \ref{yo}, any primitive realization of  $\mathscr{D}$  is indecomposable.
%  for an $\alpha \in D_1$ and $\beta \in D_2$ if $\langle \alpha, \beta \rangle$  is transitive then it is primitive.

Let us assume  that $\mathscr{D}$ admits an algebraic factorization.
%Since $\mathscr{D}$ is decomposable,
% by Proposition \ref{nt1}, there exists an algebraic factorization of
% $\mathscr{D}$ such that its first factor 
%is  a non-trivial admissible datum on $\mathbb{R}P^2$. That is, there exists  a non-trivial factorization of the degree, $d=uv$,  such that $D_1=U_{1}.\mathscr{W}_{1}$ and  $D_2= U_{2}.\mathscr{W}_{2}$, where  $\mathscr{U}=\{U_1,U_2\}$ is a  non-trivial admissible datum on $\mathbb{R}P^2$ (see Definition \ref{fatorization}). Thus by Propostion \ref{fator},
%$u-1=\nu(\mathscr{U})\equiv 0 \pmod{2}$. \\
%%But $D_2=[\underbrace{2,\dots,2}_{s>0},\underbrace{1,\dots,1}_{\ell_2>1}]$ and $d$ is odd then $U_2=[1,\dots,1]$ is trivial and $\nu(\mathscr{U})=\nu(U_1)=u-1$. Hence $U_1=[u]$
%%%and we have $\mathscr{U}\neq \{[1,\dots,1],[1,\dots,1]\}$.
%Suppose  $U_{i}$ with $s_i$ sommands and
%   $\mathscr{W}_{i}$  with $s_i$ partitions of $w$, with
%   $ 2<s_1+s_2<2u.$ The first inequality
% comes from $\gamma(D_1)=\gamma(D_2)=1$ and  the second one comes from $\nu(\mathscr{U})=u-1$.\\
%  Suppose
%  $s_1<u,$ $ \,U_1=[u_{1_1},\dots,u_{1_{s_1}}] $ with $u_{1_1}> 1$,$\, \mathscr{W}_1=\{W_{1_1},\dots,W_{1_{s_1}}\}$ and 
%  $$ \alpha \in U_1.\mathscr{W}_1=\{u_{1_1}.W_{1_1},\dots, u_{1_{s_1}}.W_{1_{s_1}}\}.$$
%   This factorization defines  blocks for $\langle \alpha \rangle$ (see Remark \ref{blocks}): with the elements in the cycles of $\alpha$ corresponding to the sommands in $u_{1_i}.W_i$,  we define $u_{1_i}$ blocks  of  size $w$ preserved by $\alpha$ (see \S \ref{PG}), for $i=1,\dots,s_i$.  \\
 Let $\alpha \in D_1=[c_1, \dots, c_r]$ be a permutation and suppose that $\alpha =\alpha_1\dots \alpha_r$ where $\alpha_i$ is a $c_i$-cycle, for $i=1,\dots,r.$  
 % By the hypothesis we have $|{\rm Supp}\,\alpha_i|\geq 3$, for $i=1,\dots,r.$ \\
  We will define a permutation  $\beta \in D_2=[e_{1},e_{2},\dots,e_{s},1,\dots,1]$ in such a way that:
  \vspace{-7pt}
  \begin{itemize}[noitemsep, leftmargin=15pt]
\item[1)] $\langle \alpha,\beta \rangle$ will be transitive, equivalently  $\alpha \beta$ will be a $d$-cycle by Lemma \ref{d-ciclo}, and 
\item[2)]  $\beta$ will undo  the  possible non trivial blocks of $\langle \alpha \rangle$.
%,   which is known to have  length any proper divisor of $d$.
\end{itemize}
 \vspace{-7pt}
To obtain a $\beta$ satisfying 1),  we use  Corollary \ref{b_i}, i.e., we will define each $e_i$-cycle of $\beta$  with exactly one element in common with $e_i$ different  cycles of  $\alpha$, 
 for $i=1,\dots,s$, and all cycles of $\alpha$ are  attained. To have a $\beta$ satisfying 2) 
 %undo the possible blocks of $\langle \alpha \rangle$ 
 we will take into consideration  the following sequence of facts. 
 
% \textcolor{green}{!!!D!!!March16 At this point of the proof, what is the status of the definition of $\beta$? Has it already definide or suppose it is defined and satisfies what?} %\textcolor{red}{ainda nao, em vermelho acima quis descrever a estrategia para definir $\beta$}

\vspace{3pt} 
\noindent  {\it {\bf Fact 1:} Since $\alpha \beta$ is a $d$-cycle, a system with $u$ blocks for $\langle \alpha ,\beta \rangle$, whether they exist, will be determined   by the cycles of $(\alpha \beta)^u$, for a proper divisor $u$ of $d$. }

%What comes next When I look at the explanation of Fact 1 give me the impression that we are assuming that there is a $\beta$ which satisfies several conditions.
%Is this correct? Could we write something like Fact : Assuming that we have $\beta$ which satisfies. ????? then..... Perhaps from the previous proof we can assume that. Make %sense?  }

\vspace{5pt}
\noindent  On the other hand,  by the hypothesis we have $|{\rm Supp}\,\alpha_i|\geq 3$, for $i=1,\dots,r$ and we can define $\beta$ in such a way  that for $i=1,\dots,r:$ 
\begin{eqnarray}\label{FixB}
 |{\rm Supp}\, \beta \cap {\rm Supp}\, \alpha_i|\leq 2.
%  \;  \textrm{ and } \;
% |{\rm Supp}\, \beta \cap {\rm Supp}\, \alpha_x|=1.
  \end{eqnarray}
If $ |{\rm Supp}\, \beta \cap {\rm Supp}\, \alpha_i|= 2$, we will choose two consecutive elements in $\alpha_i$ for this intersection. % Hence
% \begin{eqnarray}
%\textrm{$|{\rm Supp}\, \alpha_i \cap {\rm Fix} \, \beta| \geq 1$, for $i=1,\dots,a$}.
%\end{eqnarray}
Hence the $d$-cycle   $\alpha \beta$ will be such that, for
% \textcolor{green}{!!!D!!!March16  "for any i" ?}\textcolor{red}{sim}
each   $i=1,\dots,r$, will exist $z_i \in {\rm Supp}\, \alpha_i \cap {\rm Supp}\, \beta$  such that:
 \begin{eqnarray}\label{consecutives}
 \textrm{$z_i^{\alpha^{l}} \in {\rm Supp}\, \alpha_i \cap {\rm Fix}\, \beta$ and $z_i^{(\alpha\beta)^l}=z_i^{\alpha^l}$, for $l=1,\dots, |{\rm Supp}\, \alpha_i|-2$},
 \end{eqnarray}
then  at least $|{\rm Supp}\, \alpha_i|-1$ consecutive elements in $\alpha_i$ are also consecutive in $\alpha \beta.$\\\\
% {\it {\bf Fact 2:} Notice that $|{\rm Supp}\, \alpha_i \cap {\rm Fix}\, \beta|\geq|{\rm Supp}\, \alpha_i|-2$, for $i=1,\dots,r.$}\\\\
{\it {\bf Fact 2:} Suppose that $\langle \alpha \rangle$ admits a nontrivial system of blocks and  $\beta$ satisfies (\ref{FixB}) and (\ref{consecutives}). If there is a cycle $\alpha_i$   containing consecutive elements
in the same block 
%{\bf !!!D!!!Sep/18 Aqui quer dizer que se tiver pelo menos 2 consecutivos ai o resultado ja vale?} 
% \textcolor{red}{ in which consecutive elements which are in the same block,}
%    \textcolor}{red}{in which {\bf !!!D!!!Sep/18 Aqui quer dizer que se tiver pelo menos 2 consecutivos ai o resultado ja vale?} consecutive elements are in the same block,} 
then this block will be trivial for  $\langle \alpha, \beta \rangle$.}

 \vspace{4pt}
 \noindent This is because by applying $\alpha$ repeatedly on these consecutive elements, the support of $\alpha_i$ will be absorbed by the block. Then, by (\ref{consecutives}), there are consecutive elements in $\alpha \beta$ belonging  to the same block and, by the argument above, the support of $\alpha \beta$ will also be absorbed by the block. Since $\alpha \beta$ is a $d$-cycle  by Fact 1 then  the block will be trivial.
%{\bf !!!D!!!Sep/18 Aqui quer dizer que se tiver pelo menos 2 consecutivos ai o resultado ja vale?} 
% \textcolor{red}{ in which consecutive elements which are in the same block,}
%    \textcolor}{red}{in which {\bf !!!D!!!Sep/18 Aqui quer dizer que se tiver pelo menos 2 consecutivos ai o resultado ja vale?} consecutive elements are in the same block,} 

%\textcolor{green}{!!!D!!!March16 Which block?  Do we need to give  some explanation why Fact 2 holds?}

\vspace{5pt}
\noindent From Fact 1 and  Fact 2,  a proper divisor $u$ of $d$  cannot generate a system with $u$ blocks for our case, satisfying (\ref{FixB}) and  (\ref{consecutives}), if
 there is $c_i>u$ with $\gcd(u,c_i)=1$. To show this, suppose that $\langle \alpha, \beta \rangle$ admits a system with $u$ blocks and that there is $c_i$ such that $u \leq c_i-1$ and $\gcd(u,c_i)=1$. Let $S$ be the subset  of ${\rm Supp}\,\alpha_i$ whose elements are consecutive  in the $d$-cycle $\alpha \beta$, as claimed in  (\ref{consecutives}). Then $c_i-1 \leq |S|\leq c_i$ and we have:
 \vspace{-7pt}
  \begin{itemize}[noitemsep, leftmargin=15pt]
 \item If  $|S|=c_i$  then from Fact 1 and (\ref{consecutives}), there are $x,y \in {\rm Supp}\,\alpha_i$ in the same block such that 
 $x^{{\alpha_i}^u}=x^{{\alpha}^u}=y$. Since $\gcd(u,c_i)=1$, by applying $\alpha$ several times the block will absorb ${\rm Supp}\, \alpha_i$. Then Fact 2 implies that the block has to be trivial.
 \item If $|S|=c_i-1$  and $u<c_i-1$, we conclude that the block has to be trivial analogously to the previous case.
 \item  If $|S|=c_i-1$  and $u=c_i-1\geq 3$. Let $x \in {\rm Supp}\, \alpha_i \setminus S$ then there is $\ell < d/u$ such that 
 $x^{{(\alpha \beta)^{\ell u}} }\in S$ with $x$ and  $x^{{(\alpha \beta)^{\ell u}} }$  in the same block determined by $u$.   If they are consecutive in $\alpha_i$ by Fact 2 we conclude that the block has to be trivial. If they are not consecutive in $\alpha_i$ then $y:=x^{\alpha} =x^{\alpha_i}$ and $z:=(x^{{(\alpha \beta)^{\ell u}} })^{\alpha}=(x^{{(\alpha \beta)^{\ell u}} })^{\alpha_i}$ are in the same block and both  in $S$ with $y^{(\alpha \beta)^v}=z$ and $v<u$. But this is a contradiction with Fact 1.
 \end{itemize}
   Hence:\\
\noindent {\it  {\bf Fact 3:}  a proper divisor $u$ of $d$ could generate a system with $u$ blocks for $\langle \alpha, \beta \rangle$  satisfying (\ref{FixB}) and (\ref{consecutives}),  if for all $c_i$ we have either $c_i \leq u$ or   $\gcd(u,c_i)\neq 1.$}

\vspace{5pt}  
\noindent    We will explore the algebraic factorization of $\mathscr{D}$ (assumption at the beginning of the proof). By Remark \ref{blocos}, the algebraic factorization of $\mathscr{D}$ provides a factorization $d=uw$ and  describes the structure of  a system of  blocks  for  the group $\langle \alpha, \beta \rangle$. % this is possible by  hypothesis  in $iii)$ and Lemma \ref{s<a}.\\
% Since $\mathscr{D}$ is decomposable,
Namely,  for $\mathscr{D}=\{D_1,D_2\}$ we will have 
 $D_1=U_{1}.\mathscr{W}_{1}$ and  $D_2= U_{2}.\mathscr{W}_{2}$, where   $\mathscr{U}=\{U_1,U_2\}$ is a set of partitions of $u$, with $\nu(\mathscr{U})=u-1$,  and  by Proposition \ref{fator}: $\mathscr{W}=\{\mathscr{W}_1,\mathscr{W}_2\}$, is a set of collections of partitions of $w$, with $\nu(\mathscr{W})=w-1$. \\
Suppose  $U_{i}$ with $\kappa_i$ summands and
   $\mathscr{W}_{i}$  with $\kappa_i$ partitions of $w$, then
   $ 2<\kappa_1+\kappa_2=u+1.$ The  inequality
 comes from the hypothesis $\gcd(D_i)=1$ that implies $\kappa_i>1$ and  the equality comes from  $\nu(\mathscr{U})=u-1.$ Moreover, 
  from Corollary \ref{U1} we can suppose
  $\kappa_1<u$ and $ \,U_1=[u_{1_1},\dots,u_{1_{\kappa_1}}] $ with $u_{1_1}> 1$. Hence $\, \mathscr{W}_1=\{W_{1_1},\dots,W_{1_{\kappa_1}}\}$ and 
  \begin{eqnarray}\label{fac}
   \alpha \in U_1.\mathscr{W}_1=\{u_{1_1}.W_{1_1},\dots, u_{1_{\kappa_1}}.W_{1_{\kappa_1}}\}=D_1.
   \end{eqnarray}
{\it  {\bf Fact 4:}  the factorization in (\ref{fac}) defines $u$ blocks of length $w$ for $\langle \alpha \rangle$ (see Remark \ref{blocos}): with the elements in the cycles of $\alpha$ corresponding to the summands in $u_{1_i}.W_{1_i}$,  we define $u_{1_i}$ blocks  of  size $w$  preserved by $\alpha$  and each summand  of $W_{1_i}$ means the number of elements in a block in the   respective cycle, for $i=1,\dots,\kappa_1$. }

\vspace{5pt}
\noindent Moreover:
\vspace{-5pt} 
  \begin{enumerate}[noitemsep, leftmargin=15pt]
\item If $u_{1_{i}}=1$ for some $i \in \{1,\dots,\kappa_1\}$ then the cycles defined by the summands in $u_{1_{i}}.W_{1_i}$ have their supports contained in a block.  Then, by Fact 2, the block is trivial for  $\langle \alpha, \beta \rangle$.
%  \item if  $u_{1_{i}}>1$, for $i=1,\dots, s_1$, and $u_{1_{*}}<x$ 
    \item  If  $u_{1_{i}}>1$, for $i=1,\dots, \kappa_1$ and if $\mathscr{W}_1$ is not trivial, by (\ref{fac}) there exists a cycle  $\alpha_m$ associated to an integer in $u_{1_{j}}.W_{1_j}$ for some $j \in \{1,\dots,\kappa_1\} $, such that $|{\rm Supp}\, \alpha_m| > u_{1_j}$ and  has two elements in the same block by Fact 4. Then, by (\ref{consecutives}) there is  an element $z_{m_1} \in {\rm Supp}\,\alpha_m$ such that $z_{m_1}$ and $z_{m_1}^{(\alpha \beta)^{u_{1_{j}}}}$ are in the same block. By Fact 1 and Fact 3, this block will be trivial  block for $\langle \alpha, \beta\rangle$ unless  $u_{1_{j}}=u$. In this case $\kappa_1=1$ and $U_1=[u]$. But this is a contradiction because $\gcd(D_1)=1.$ Therefore if  $u_{1_{i}}>1$, for $i=1,\dots, \kappa_1$ then $\mathscr{W}_1$ is trivial.
  \end{enumerate}  
     Hence we conclude that:
     
     \vspace{7pt}
 \noindent   {\it  {\bf Fact 5:} the existence of blocks  for $\langle \alpha, \beta \rangle$ satisfying (\ref{FixB}),(\ref{consecutives}) and (\ref{fac}),  is possible only  if   $\mathscr{W}_1$ is  trivial. Then each block, which is known to  have length $w$, contains at most one element of a cycle, and any two cycles which contains elements of the same block they have the same length.} 
 
 \vspace{5pt}
 \noindent  If $\mathscr{W}_1$ is  trivial, then  in (\ref{fac}):
     $W_{1_i}=[1,\dots,1]$ for $i=1,\dots,\kappa_1$, thus:
     \begin{eqnarray}\label{D1}
     D_1=[\underbrace{u_{1_1},\dots,u_{1_1}}_{w}, \underbrace{u_{1_2},\dots,u_{1_2}}_{w},\dots, \underbrace{u_{1_{\kappa_1}},\dots,u_{1_{\kappa_1}}}_{w}]
     \end{eqnarray}
      $r=\kappa_1w$ and, since $\gcd(D_1)=1$, the summands of $D_1$ are all less than $u$. Then, by 
       (\ref{D1D2}), we have:
      \begin{eqnarray}\label{u1's}
       u > u_{1_1} \geq u_{1_2} \geq \dots \geq u_{1_{\kappa_1} }.
       \end{eqnarray}
              
 \noindent    To finish, we provide in this last step the definition of  $\beta$  where  the non-trivial cycles of   $\beta$ must satisfy (\ref{FixB}) and (\ref{consecutives}),
  then show that there is no a system of $u$ blocks for   $\langle \alpha, \beta \rangle$ and finally that  there is no a system of  blocks for   $\langle \alpha, \beta \rangle$.

    \vspace{7pt}
  \noindent     {\bf Fact 6:} Let us define $\beta  =\beta_1 \beta_2 \dots \beta_s \in D_2=[e_{1},e_{2},\dots,e_{s},1,\dots,1]$
  in such a way that their non-trivial cycles  connect  consecutive cycles of $\alpha$, using the order given by (\ref{D1}) and (\ref{u1's}).  Namely:\\
    1)  the  $e_1$-cycle   $\beta_1$ connects  the cycles: $\alpha_1, \dots, \alpha_{e_1}$, which are the first  $e_1$ consecutives cycles of  $\alpha$,\\
        2) the  $e_2$-cycle $\beta_2$ connects the cycles: $\alpha_{e_1},\alpha_{e_1+1},\dots, \alpha_{e_1 + (e_2-1)}$, which are the next $e_2$ consecutive cycles
               of $\alpha$,\\
        3) for  $2<i\leq s$, the  $e_i$-cycle  $\beta_i$ connect the cycles:\\
         $\alpha_{e_1+(e_2-1)+\dots+(e_{i-1}-1)}, \alpha_{e_1+(e_2-1)+\dots+(e_{i-1}-1)+1}, \dots, \alpha_{e_1+(e_2-1)+\dots+(e_{i-1}-1)+ (e_{i}-1)}$, which are $e_i$ consecutives 
         cycles in $\alpha$. 

  \vspace{5pt}
  \noindent     Since $\gcd(D_1)=1$, there exist in  (\ref{u1's}) a strict inequality, $u_{1_j} < u_{1_{j+1}}$ for some $1_j$. Then, by Fact 6,  there exist one and only one  cycle of $\beta$ connecting  the  pair of  cycles of $\alpha$ which correspond to  $u_{1_j},  u_{1_{j+1}}$. Hence, by Fact 5, this cycle of $\beta$  relates two different  blocks for $\langle \alpha \rangle$. We assert that $\beta$ does not  preserve them. By  Fact 5, this cycle of $\beta$  acts on two different blocks, 
call them $B_1$ and  $B_2$, of  $\langle \alpha \rangle$.
If $\beta$ preserve blocks we need to have exactly  
 $ w>1$ cycles of $\beta$ with the same property, to have
$(B_1)^{\beta}=B_2$. But the definition of 
 $\beta$ admits only one cycle of this type. Therefore there is no a system with $u$ blocks for   $\langle \alpha, \beta \rangle$. \\
  In case there is a non-trivial system of blocks for the group $G$ with $u'$ blocks, this implies  that     
there is an   algebraic factorization  of the datum  and $d=u'w'$. So  
$\mathscr{D}=\{D_1,D_2\}=\mathscr{U}'.\mathscr{W}'$, for some  $\mathscr{U}'=\{U_1',U_2'\}$  a set of partitions of $u'$ with $\nu(\mathscr{U}')=u'-1$,   $\mathscr{W}'=\{\mathscr{W}_1',\mathscr{W}_2'\}$   a set of  collections  of partitions of $w'$, with $\nu(\mathscr{W}')=w'-1$, and we have  
  $D_1={U}_{1}'.\mathscr{W}_{1}'$.
 Using the same arguments as in the case of the first decomposition we restrict  to the case where      
 $\mathscr{W}_{1}'$ is  trivial, and we have 
 \begin{eqnarray}\label{1}
      D_1=[c_1, \dots, c_r]=[\underbrace{u_{1_1}',\dots,u_{1_1}'}_{w'}, \underbrace{u_{1_2}',\dots,u_{1_2}'}_{w'},\dots, \underbrace{u_{1_{\ell_1}'},\dots,u_{1_{\ell_1}}'}_{w'}],
     \end{eqnarray}
      where    $d=u'  w'$, $ u'=u_{1_1}'+\dots +u_{1_{\ell_1}}$ and
      \begin{eqnarray}\label{2}
      u'> u_{1_1}' \geq u_{1_2}' \geq \dots \geq u_{1_{\ell_1}' }.
       \end{eqnarray}
       
As in the previous case, there is a strict inequality $u_{1_k}' > u_{1_{k+1}}'$, the same as in   the first decomposition,
  then the  already defined $\beta$ does not preserve  the system of $u'$ blocks of $\langle \alpha \rangle$, hence the result follows.
\end{proof}

  \begin {Rm} The cases covered by Proposition \ref{3cycles-primitive}  are not disjoint of the cases covered by Lemma \ref{C123}. Furthermore, neither statement implies the other.
        \end{Rm}
\begin{Prop}\label{2-indecomp}Let $\mathscr{D}=\{D_{1},D_{2}\}$  be a branch datum such that $\nu(\mathscr{D})=d-1$. If $\gcd(D_{i})=1$ for $i=1,2$ and  $\mathscr{D}\neq \{[2,\dots,2,1],[2,\dots,2,1]\} $ then $\mathscr{D}$ is realizable by an indecomposable branched covering.
\end{Prop}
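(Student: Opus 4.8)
The plan is to reduce the statement, through Proposition \ref{2.1-EKS} together with Proposition \ref{yo}, to an existence problem inside $\Sigma_d$. Since $\nu(\mathscr{D})=d-1$ is even, $d$ is odd, and after relabelling I may assume $\nu(D_1)\geq\nu(D_2)$, so that $\mathscr{D}$ has the form $(\ref{Dis})$--$(\ref{D1D2})$. It then suffices to produce permutations $\alpha\in D_1$, $\beta\in D_2$ and a square root $\omega$ of $\alpha\beta$ for which $G=\langle\alpha,\beta,\omega\rangle$ is transitive and primitive: by Proposition \ref{2.1-EKS} such a triple gives a branched covering $\phi\colon M\to\mathbb{R}P^2$ with branch datum $\mathscr{D}$ and $M$ connected, where $\nu(\mathscr{D})=d-1$ forces $M=\mathbb{R}P^2$, so $\phi$ is a primitive covering as in Corollary \ref{real-min}; by Proposition \ref{yo} it is indecomposable exactly when its monodromy group $G$ is a primitive permutation group.

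I would then split according to whether the hypothesis of Proposition \ref{3cycles-primitive} is met. Suppose first that $D_1$ contains summands $x,y$ with $x>\max\{y,2\}$ and $\gcd(y,d)=1$. Then Proposition \ref{3cycles-primitive} directly yields $\alpha\in D_1$, $\beta\in D_2$ such that $\sqrt{\alpha\beta}$ exists and $\langle\alpha,\beta,\sqrt{\alpha\beta}\rangle$ is primitive; taking $\omega=\sqrt{\alpha\beta}$ settles this case.

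It remains to treat the data for which no such pair exists. Here I would invoke the trichotomy recorded immediately after Proposition \ref{3cycles-primitive}: a datum $\{D_1,D_2\}$ with $\nu(D_1)\geq\nu(D_2)$ failing that hypothesis must either have all summands of $D_1$ smaller than $3$ (forcing $\mathscr{D}=\{[2,\dots,2,1],[2,\dots,2,1]\}$), or satisfy $D_1=[k,\dots,k]$ with $k>2$ (so $\gcd(D_1)=k>1$), or else have every summand of $D_1$ at least $3$. The standing hypotheses $\mathscr{D}\neq\{[2,\dots,2,1],[2,\dots,2,1]\}$ and $\gcd(D_1)=1$ eliminate the first two alternatives, leaving every summand of $D_1$ equal to at least $3$. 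Now Lemma \ref{C123} applies and provides, for $\alpha\in D_1$, a permutation $\beta\in D_2$ making $\langle\alpha,\beta\rangle$ primitive; being transitive, $\alpha\beta$ is a $d$-cycle by Lemma \ref{d-ciclo}. As $d$ is odd, $\alpha\beta$ is a square in the cyclic group $\langle\alpha\beta\rangle$, so I may pick $\omega\in\langle\alpha\beta\rangle$ with $\omega^2=\alpha\beta$; then $G=\langle\alpha,\beta,\omega\rangle=\langle\alpha,\beta\rangle$ is primitive, which closes this case.

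The substantive mathematics lies in the lemmas already established, so the main difficulty in assembling the argument is organisational rather than computational: I must check that the dichotomy is exhaustive and that, in each branch, a square root $\omega$ of $\alpha\beta$ is available so that Proposition \ref{2.1-EKS} genuinely applies. The latter is automatic for the odd-order $d$-cycle obtained from Lemma \ref{C123} and is guaranteed by Proposition \ref{3cycles-primitive} in the remaining branch; verifying that $\gcd(D_1)=1$ and $\mathscr{D}\neq\{[2,\dots,2,1],[2,\dots,2,1]\}$ precisely remove the two degenerate alternatives of the trichotomy is the only point demanding care.
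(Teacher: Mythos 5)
Your proposal is correct and follows essentially the same route as the paper's proof: the same dichotomy between Proposition \ref{3cycles-primitive} and Lemma \ref{C123} (with the hypotheses $\gcd(D_1)=1$ and $\mathscr{D}\neq\{[2,\dots,2,1],[2,\dots,2,1]\}$ ruling out the two degenerate branches of the trichotomy), followed by Proposition \ref{2.1-EKS} and Proposition \ref{yo}. The only difference is expository: the paper simply writes $\sqrt{\alpha\beta}$ in both cases, whereas you make explicit that in the Lemma \ref{C123} branch the square root exists because $\alpha\beta$ is a $d$-cycle of odd length $d$ — a point the paper leaves implicit in Lemma \ref{d-ciclo} (``and so a square'').
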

\begin{proof} Notice that $D_1$ will satisfy the hypothesis either in Proposition \ref{3cycles-primitive}  or  in Lemma \ref{C123}.
In the first case Proposition   \ref{3cycles-primitive} state that   there are permutations $\alpha \in D_1, \beta \in D_2$ such that $\langle \alpha, \beta, \sqrt{\alpha \beta} \rangle$ is  primitive, and in the second case,  from Lemma \ref{C123},  follows  the same conclusion.
 Since the group is transitive,  Proposition \ref{2.1-EKS} implies the existence of a  branched covering of degree $d$, $\phi: \mathbb{R}P^2 \rightarrow \mathbb{R}P^2$,  with 
branch data $\mathscr{D}$ and monodromy group $\langle \alpha, \beta, \sqrt{\alpha \beta}\rangle$. Since the group is primitive,  Proposition \ref{yo} implies that the branched covering $\phi$ is indecomposable.\end{proof}
 
\begin{Cor}\label{AD} Let $\mathscr{D}=\{D_{1},D_{2}\}$  be a branch datum such that $\nu(\mathscr{D})=d-1$.
If $\mathscr{D}$ does not admit an indecomposable realization then either $\gcd(D_1)\neq 1$ or $\mathscr{D}=\{[2,\dots,2,1],[2,\dots,2,1]\}$.
\end{Cor}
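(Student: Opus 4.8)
The plan is to read this statement as the logical contrapositive of Proposition \ref{2-indecomp}, after noticing that one of its hypotheses is automatically satisfied. First I would record the arithmetic constraint that pins down the parity of $d$: since the total defect of any branch datum is even, the hypothesis $\nu(\mathscr{D})=d-1\equiv 0\pmod 2$ forces $d$ to be odd, so we are genuinely in the regime treated throughout this section and may invoke the standing conventions (\ref{Dis}) and (\ref{D1D2}).

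Next I would invoke the labelling convention (\ref{Dis}), under which we may assume $\nu(D_1)\geq\nu(D_2)$; in particular $\nu(D_2)\leq(d-1)/2$. The key observation is that this makes $\gcd(D_2)=1$ unconditional: by Remark \ref{numberrestr}(b) the bound $\nu(D_2)\leq(d-1)/2$ forces $\ell_2\geq 1$, i.e.\ $D_2$ always contains a part equal to $1$, and a partition containing a $1$ has greatest common divisor equal to $1$. Hence, among the two hypotheses $\gcd(D_i)=1$ appearing in Proposition \ref{2-indecomp}, the one for $i=2$ never imposes an actual restriction.

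With that in hand the corollary follows by contraposition. Proposition \ref{2-indecomp} asserts that $\gcd(D_1)=\gcd(D_2)=1$ together with $\mathscr{D}\neq\{[2,\dots,2,1],[2,\dots,2,1]\}$ yields an indecomposable realization. Negating this implication, the failure of any indecomposable realization forces at least one of $\gcd(D_1)\neq 1$, $\gcd(D_2)\neq 1$, or $\mathscr{D}=\{[2,\dots,2,1],[2,\dots,2,1]\}$ to hold; since the middle clause is vacuous by the previous paragraph, we are left with exactly $\gcd(D_1)\neq 1$ or $\mathscr{D}=\{[2,\dots,2,1],[2,\dots,2,1]\}$, which is the assertion.

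I do not expect a genuine obstacle here: the entire mathematical content lives in Proposition \ref{2-indecomp}, and hence in the constructions of Lemma \ref{3cycles0}, Proposition \ref{3cycles-primitive} and Lemma \ref{C123}. The only point requiring care is the bookkeeping that makes $\gcd(D_2)=1$ automatic via Remark \ref{numberrestr}(b); once that is noted, the statement is a one-line contrapositive and requires no further construction.
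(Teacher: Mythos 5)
Your proof is correct and follows essentially the same route as the paper: the paper's own proof simply observes that the data in question are the complement of those covered by Proposition \ref{2-indecomp}, relying (as you do) on the earlier observation via Remark \ref{numberrestr}(b) that $\gcd(D_2)=1$ holds automatically under the convention $\nu(D_1)\geq\nu(D_2)$. Your write-up merely makes explicit the bookkeeping (parity of $d$, the vacuity of the clause $\gcd(D_2)\neq 1$) that the paper leaves implicit.
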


\begin{proof}
The branch data which satisfy the hypotheses of the corollary  are  the complement of the data studied in Theorem \ref{2-indecomp}, so the result follows. 
\end{proof}
\subsubsection{Uniquely decomposable}\label{UD}
In this section we show the converse of Corollary \ref{AD}, i.e.  the cases $\{[2,\dots,2,1],[2,\dots,2,1]\}$ and $\{D_1,D_2\}$  with $\gcd(D_1)\neq1$  do not admit realization by indecomposable primitive branched coverings. In terms of \cite{ACDHL} and under our hypothesis,  the case $\gcd(D_1)\neq1$ will imply that: to have a generator with cyclic structure  an ic-partition of type $(\gcd(D_1),d/\gcd(D_1))$, for $1<\gcd(D_1)<d$, guarantees the imprimitivity of the permutation group. This is related with \cite[Theorem 2.5]{ACDHL}, but does not follows from it. 

 \begin{Prop}\label{special} If $d$ is odd and not a prime,  then any realization of the datum  \\
$\mathscr{D}=\{[2,\dots,2,1], [2,\dots,2,1]\}$ is decomposable. 
 \end{Prop}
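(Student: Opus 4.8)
The plan is to show that the monodromy group of \emph{every} realization of $\mathscr{D}=\{[2,\dots,2,1],[2,\dots,2,1]\}$ is imprimitive. Once this is established, decomposability follows: for a primitive covering this is exactly Proposition \ref{yo}, and a non-primitive covering is decomposable anyway (see \cite{BGZ}). By Proposition \ref{2.1-EKS} a realization is the same datum as a choice of permutations $\alpha,\beta\in[2,\dots,2,1]$ and $\omega\in\Sigma_d$ with $\alpha\beta\omega^2=1$ such that $G:=\langle\alpha,\beta,\omega\rangle$ is transitive; this $G$ is the monodromy group, and I would prove it is imprimitive for \emph{all} such data.

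The decisive observation is that $\alpha$ and $\beta$ are involutions, since each is a product of disjoint transpositions together with a single fixed point, so $\alpha^2=\beta^2=1$. Writing $c:=\alpha\beta$, I would first verify that $\langle c\rangle$ is normal in $G$: one has $\alpha c\alpha=\beta\alpha=c^{-1}$ and similarly $\beta c\beta=c^{-1}$, while the defining relation gives $c=\omega^{-2}\in\langle\omega\rangle$, so $\omega$ centralizes $c$. As $G$ is transitive, the orbits of the normal subgroup $\langle c\rangle$ all have a common size and form a block system for $G$ (a standard fact, see \cite{DM}). The argument then splits according to the action of $\langle c\rangle$.

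If $\langle c\rangle$ is intransitive and $c\neq 1$, its orbits are blocks of common size $k$ with $1<k<d$, so $G$ is imprimitive. The main case is when $\langle c\rangle$ is transitive, i.e.\ $c$ is a $d$-cycle (this corresponds, by Lemma \ref{d-ciclo}, to $\langle\alpha,\beta\rangle$ already being transitive); here the orbit block system is trivial, and the point I would exploit is that a cyclic group of composite order $d=uw$, with $1<u,w<d$ available because $d$ is not prime, has a \emph{unique}, hence characteristic, subgroup $\langle c^{u}\rangle$ of order $w$. Being characteristic in the normal subgroup $\langle c\rangle$, it is itself normal in $G$, and its $u$ orbits of size $w$ form a nontrivial block system; thus $G$ is imprimitive. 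The degenerate case $c=1$ (that is, $\alpha=\beta$) I would handle by the same idea applied to the rotation subgroup $\langle\alpha\omega\rangle$ of the dihedral group $G=\langle\alpha,\omega\rangle$: it is normal of index at most two, and since $d$ is odd it cannot have two orbits of size $d/2$, so it is transitive, $\alpha\omega$ is a $d$-cycle, and we reduce to the previous case.

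I expect the only real subtlety to be the transitive case, where the block system coming directly from $\langle\alpha\beta\rangle$ collapses and one must descend to the characteristic subgroup of order $w$; this is precisely where non-primeness of $d$ enters, supplying a proper divisor $w$ with $1<w<d$. Oddness of $d$ is used only in the degenerate subcase $\alpha=\beta$, to exclude a partition of the $d$ points into two equal blocks. The remaining steps—normality of $\langle\alpha\beta\rangle$ and the orbit/block correspondence for a normal subgroup of a transitive group—are routine.
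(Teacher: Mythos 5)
Your proof is correct, and it takes a genuinely different route from the paper's, although both ultimately extract blocks from the cyclic group $\langle\alpha\beta\rangle$. The paper's proof analyzes only the subgroup $H=\langle\alpha,\beta\rangle$: since $\alpha^2=\beta^2=1$, $H$ is a quotient of the infinite dihedral group $Z_2\ast Z_2$, hence either cyclic or $Z_k\rtimes Z_2$; the intransitive possibilities are set aside, leaving $H\cong Z_d\rtimes Z_2$, and the relation $yxy^{-1}=x^{-1}$ turns the (nontrivial, since $d$ is composite) blocks of the cyclic part into blocks for the involutions. You instead work in the full monodromy group $G=\langle\alpha,\beta,\omega\rangle$: you prove $\langle\alpha\beta\rangle\trianglelefteq G$ (conjugation by $\alpha$ or $\beta$ inverts $\alpha\beta$, while $\omega$ centralizes it because $\alpha\beta=\omega^{-2}$), and then take as nontrivial block system either the orbits of $\langle\alpha\beta\rangle$ itself (your intransitive case) or the orbits of its characteristic subgroup $\langle(\alpha\beta)^u\rangle$ of order $w$ (your $d$-cycle case). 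Your route buys two things the paper's write-up leaves implicit: first, your blocks are manifestly invariant under $\omega$, not merely under $\alpha$ and $\beta$ --- the paper's proof never mentions $\omega$ at all, although imprimitivity of $G$ requires $\omega$ to preserve the blocks, and your observation that $\omega$ centralizes $\alpha\beta$ is exactly the ingredient that secures this; second, you cover the realizations in which $\langle\alpha,\beta\rangle$ is intransitive, including $\alpha=\beta$, $\alpha\beta=1$ --- such realizations do exist (take $\alpha=\beta$ a vertex reflection of the regular $d$-gon and $\omega$ an adjacent reflection, so that $\alpha\omega$ is a $d$-cycle and $G$ is dihedral of order $2d$, transitive), whereas the paper disposes of the cyclic case with the brief remark that it corresponds to ``only one branched point''. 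The paper's classification yields a sharper structural picture of $H$ in the transitive case, but for the stated imprimitivity claim your normal/characteristic-subgroup argument is the more complete one, at essentially the same length, and it correctly isolates where each hypothesis enters (compositeness of $d$ in the $d$-cycle case, oddness of $d$ only in the degenerate case $\alpha=\beta$).
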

\begin{proof} Step 1- We claim that  the subgroup generated by $\alpha, \beta$ is isomorphic to either a finite cyclic group or $Z_k \rtimes Z_2$ with the action multiplication by $-1$. To see this since $\alpha^2=\beta^2=1$ follows that the subgroup $\langle \alpha, \beta \rangle$ is a quotient of the infinite dihedral group $Z_2*Z_2=Z\rtimes Z_2$      where the action of $Z_2$ on $Z$ is non trivial. By straighrforward calculation using the short exact sequence:\\ 
\centerline{$1 \to Z \to Z\rtimes Z_2 \to Z_2 \to 1$}\\
 we have that    a normal subgroup of the middle group is also of the form $kZ$  or $kZ\rtimes Z_2$ and the result follows.  
\vspace{4pt}\\
\noindent Step 2- If the subgroup is either  cyclic or $k<d$ then the subgroup is not transitive. In case it is cyclic then we have only one brached point. In the case of $Z_k\rtimes Z_2$ the block of the generators of $Z_k$ are also blocks of the generator of $Z_2$ as   result of the relation $yxy^{-1}=x^{-1}$. 
\vspace{4pt}\\
\noindent Step 3- The subgroup $\langle \alpha, \beta\rangle$ is isomorphic to $Z_d\rtimes Z_2$ from the previous steps. Now any block of the  cyclic part $Z_d$ is also a block for $\beta$ because of the relation  $yxy^{-1}=x^{-1}$    where here $x$ is a generator of $Z_d$ and y a generator of $Z_2$.
 \end{proof}

\begin{Prop}\label{2-trans}
Let $\mathscr{D}=\{D_{1},D_{2}\}$  be a branch datum such that $\nu(\mathscr{D})=d-1$ and    $\gcd(D_1) \neq 1$. Then for every pair of permutations $\alpha \in D_1$ and $\beta \in D_2$,  if $\langle \alpha, \beta \rangle$ is transitive then it is imprimitive. Hence, every realization having  $\langle \alpha, \beta \rangle$  as 
monodromy group is decomposable.
\end{Prop}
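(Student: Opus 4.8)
The plan is to produce, for any transitive $\langle\alpha,\beta\rangle$, an explicit nontrivial block system of $k:=\gcd(D_1)$ blocks, each of size $w:=d/k$. Since $d$ is odd and $k\mid d$ with $k>1$, the divisor $k$ is odd and at least $3$; moreover $\gcd(D_1)\ne 1$ forces $D_1$ to have no part equal to $1$, so every cycle of $\alpha$ has length divisible by $k$ and we may write $D_1=[kc_1',\dots,kc_m']$ with $c_1'+\dots+c_m'=w$. As $\mathscr{D}$ is a branch datum with two nontrivial partitions we have $D_1\ne[d]$, hence $m\ge 2$ and $1<k<d$, so the intended blocks will be nontrivial. (One checks from $\nu(D_1)=d-m$ and $k\ge 3$, whence $m\le d/3$, that automatically $\nu(D_1)>\nu(D_2)$, so the labelling and the standing assumptions of the section apply.)

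First I would introduce a ``$k$-colouring'' $f\colon\{1,\dots,d\}\to\mathbb{Z}/k$ adapted to $\alpha$: writing $\alpha=\alpha_1\cdots\alpha_m$ with $\alpha_i$ a $(kc_i')$-cycle, I set $f$ on the $i$-th cycle to increase by $1$ at each application of $\alpha_i$, so that $f\circ\alpha=f+1$ everywhere. This colouring is determined only up to an independent additive constant $t_i\in\mathbb{Z}/k$ on each cycle, and the whole point is to fix the $t_i$ so that $f$ becomes constant along every cycle of $\beta$; once this is done the colour classes $C_0,\dots,C_{k-1}$ satisfy $\alpha(C_r)=C_{r+1}$ and $\beta(C_r)=C_r$, hence form a block system preserved by $\langle\alpha,\beta\rangle$.

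The heart of the argument, and the step I expect to be the main obstacle, is the solvability of the resulting system of constraints on the $t_i$. Here the hypothesis $\gcd(D_1)\ne 1$ is used a second time, through the numerics it forces: $\nu(D_1)=d-m$ gives $\nu(D_2)=m-1$. By Corollary \ref{b_i}, transitivity of $\langle\alpha,\beta\rangle$ means each $e_j$-cycle of $\beta$ meets exactly $e_j$ distinct cycles of $\alpha$ in one point apiece, and every cycle of $\alpha$ is met. I would then form the graph $\Gamma$ on the $m$ cycles of $\alpha$ whose edges are, for each cycle $\beta_j=(y_1\cdots y_{e_j})$, the consecutive pairs among the $\alpha$-cycles hit by $y_1,\dots,y_{e_j}$. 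Demanding $f(y_1)=\dots=f(y_{e_j})$ is exactly a difference constraint along each edge of $\Gamma$. Now $\Gamma$ is connected precisely because $\langle\alpha,\beta\rangle$ is transitive, and it has $\sum_j(e_j-1)=\nu(\beta)=\nu(D_2)=m-1$ edges on $m$ vertices; a connected graph with one fewer edge than vertex is a tree. On a tree every system of difference constraints is consistent (root it and propagate), so suitable $t_i\in\mathbb{Z}/k$ exist and $f$ is constant on every cycle of $\beta$ (and trivially on ${\rm Fix}\,\beta$); thus $f\circ\beta=f$, giving the block system above and the imprimitivity of $\langle\alpha,\beta\rangle$.

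Finally, for the last sentence I would observe that the full monodromy group of a realization is $\langle\alpha,\beta,\omega\rangle$ with $\omega^2=\alpha\beta$ (Proposition \ref{2.1-EKS}). Since $\langle\alpha,\beta\rangle$ is transitive, Lemma \ref{d-ciclo} makes $\alpha\beta$ a $d$-cycle, and $\omega$ commutes with $\omega^2=\alpha\beta$; as the centraliser of a $d$-cycle in $\Sigma_d$ is the cyclic group it generates, $\omega\in\langle\alpha\beta\rangle\subseteq\langle\alpha,\beta\rangle$. Hence the monodromy group is exactly $\langle\alpha,\beta\rangle$, which we have shown to be imprimitive, and Proposition \ref{yo} yields that every such realization is decomposable.
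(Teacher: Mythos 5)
Your proof is correct, and it takes a genuinely different route from the paper's. The paper fixes a point $*$ in the first cycle of $\alpha$ and builds the block through $*$ by hand: writing $\beta=\beta_1\cdots\beta_s$ and using Corollary \ref{b_i}, it shows inductively that the orbit of $*$ under $\Lambda_j^{\gcd(D_1)}$ (where $\Lambda_j$ is the long cycle of the partial product $\alpha\beta_1\cdots\beta_j$) is a block, the key point being that $\gcd(D_1)$ divides each partial sum of the $c_i$'s; at the end the orbit of $*$ under $(\alpha\beta)^{\gcd(D_1)}$ is a common block for $\alpha\beta$ and $\alpha$, so $\langle\alpha,\beta\rangle=\langle\alpha,\alpha\beta\rangle$ is imprimitive. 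You instead build the block system globally, as the fibres of a $\mathbb{Z}/k$-colouring with $f\circ\alpha=f+1$ and $f\circ\beta=f$, and reduce its existence to the consistency of difference constraints on a graph which, by Corollary \ref{b_i} and the defect count $\nu(D_2)=m-1$, is connected with $m$ vertices and $m-1$ edges, hence a tree. Both arguments rest on the same two inputs (Corollary \ref{b_i} and the minimal-defect numerics) and in fact produce the same block system --- the paper's orbits of $(\alpha\beta)^k$ coincide with your colour classes --- but your version isolates the combinatorial mechanism (tree, hence no consistency obstruction) more transparently, while the paper's stays closer to explicit cycle computations. Your last paragraph also does more than the paper, which simply invokes Proposition \ref{yo}: the centraliser argument ($\omega$ commutes with $\omega^2=\alpha\beta$, a $d$-cycle, so $\omega\in\langle\alpha\beta\rangle$) shows that whenever $\langle\alpha,\beta\rangle$ is transitive the monodromy group of any realization with these $\alpha,\beta$ is exactly $\langle\alpha,\beta\rangle$, so the restriction in the statement of the proposition is automatic rather than an additional hypothesis; this is a worthwhile clarification.
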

\begin{proof}
Let $D_1=[c_1,\dots,c_r]$  and $\alpha=\alpha_1 \dots \alpha_r \in D_1$,  where $\alpha_i$ is  a $c_i$-cycle, for $i=1,\dots r.$ Let $* \in {\rm Supp}\,\alpha_1$ and  denote by $\mathscr{O}_0$ the orbit of $*$ by $\alpha_1^{\gcd(D_1)}$. Since $\gcd(D_1)$ divides $c_1$ then $\mathscr{O}_0$ is a  block  for $\langle \alpha_1 \rangle$, possibly trival: $\mathscr{O}_0=\{*\}$  if $c_1=\gcd(D_1)$.\\
For $D_2=[e_{1},e_{2},\dots,e_{s},1,\dots,1]$ let $\beta=\beta_1\dots \beta_s \in D_2$, with $\beta_j$ a $e_j$-cycle, for $j=1,\dots,s,$ such that $\langle \alpha, \beta \rangle$ is transitive. By Corollary  \ref{b_i}, each  $\beta_j$  has exactly one element in common with  $e_j$ different  cycles of  $\alpha$, 
 for $j=1,\dots,s$ and all cycles of $\alpha$ are attained. 
 
 \vspace{5pt} 
\noindent  Without loss of generality let us suppose that: 
\begin{enumerate}[noitemsep, leftmargin=15pt]
\item    ${\rm Supp}\,\beta_1$ intercepts once the support of each cycle in $Z_1:=\{\alpha_{_1},\dots,\alpha_{_{e_1}}\}.$

\vspace{4pt}
Then
$\alpha \beta_1=\Lambda_1\;\alpha_{e_1+1} \dots \alpha_{r},$
 where $\Lambda_1$  is a $(\sum_{i=1}^{e_1}c_i)$-cycle given by\\
 \centerline{ $(\,\iorarrow[]{\alpha_1} \; \iorarrow[]{\alpha_2}\;\dots\; \iorarrow[]{\alpha_{e_1}}\, )$,}\\
  with $\iorarrow[]{\alpha_{\bullet}}$ denoting the sequence of elements defined by the permutation $\alpha_{\bullet}$ starting at the element in ${\rm Supp}\; \alpha_{\bullet} \cap {\rm Supp}\, \beta_1$.\\
By the hypothesis, $\gcd(D_1)$ divides $\sum_{i=1}^{e_1}c_i$, hence if  $\mathscr{O}_1$ denotes the orbit of $*$ by $\Lambda_1^{\gcd(D_1)}$ then $\mathscr{O}_1$ is a nontrivial block for $\langle \Lambda_1 \rangle$ and for $\langle \prod_{j=1}^{e_1} \alpha_j \rangle$, with
 $\mathscr{O}_0 \subset \mathscr{O}_1$.

 \vspace{5pt}
\item For $j=2,\dots,s$,  $\, {\rm Supp}\,\beta_j$ intercepts once the support of each cycle in  \\
\centerline{ $Z_j:=\Big \{\Lambda_{j-1},\alpha_{_{e_1+(\sum_{\iota=2}^{j-1} e_{\iota}-1)+1}},\dots, \alpha_{_{e_1+(\sum_{\iota=2}^j e_{\iota}-1)}}  \Big \}$.}

 \vspace{5pt}
 Then
$\alpha\beta_1\dots \beta_j= \Lambda_j\;\alpha_{_{e_1+(\sum_{\iota=2}^j e_{\iota}-1)+1}} \dots \alpha_{r},$
where $\Lambda_j$ is a $\Big (\sum_{i=1}^{e_1+(\sum_{\iota=2}^j e_{\iota}-1)}c_i\Big )$-cycle given by \\
\centerline{$\Big (\iorarrow[]{\Lambda_{j-1}} \; \iorarrow[]{\alpha_{_{e_1+(\sum_{\iota=2}^{j-1} e_{\iota}-1)+1}}}\dots\; \iorarrow[]{\alpha_{_{e_1+(\sum_{\iota=2}^j e_{\iota}-1)}}} \Big )$,}\\
  with the sequences $\iorarrow[]{\Lambda_{j-1}}$ and $\iorarrow[]{\alpha_{\bullet}}$  starting at the element in  ${\rm Supp}\; \Lambda_{j-1} \cap {\rm Supp}\; \beta_j$ and ${\rm Supp}\; \alpha_{\bullet} \cap {\rm Supp}\; \beta_j$, respectively. By hypothesis, $\gcd(D_1)$ is a proper divisor of $\sum_{i=1}^{e_1+\sum_{i=2}^{l}(e_i-1)}c_i$, thus if we denote by $\mathscr{O}_j$ the orbit of $*$ by $\Lambda_j^{\gcd(D_1)}$ then $\mathscr{O}_j$ is a nontrivial block for $\langle \Lambda_j \rangle$ and for $\Big \langle \prod_{j=1}^{e_1+(\sum_{\iota=2}^j e_{\iota}-1)} \; \alpha_j \Big  \rangle$, with $\mathscr{O}_{j-1}\subset \mathscr{O}_j$. 
\end{enumerate}
Finally note that $\Lambda_s=\alpha \beta$ and $\mathscr{O}_s$  is a nontrivial block for $\langle \alpha \beta \rangle$ and for $\alpha.$   
 Then  $\langle \alpha, \alpha \beta \rangle=\langle \alpha, \beta \rangle$ is imprimitive. The final part follows from Proposition \ref{yo}.
\end{proof}

From the above result, the case that remains to be studied is:
\begin{equation}\label{LC}
\begin{array}{lll}
\textrm{Let $\mathscr{D}=\{D_{1},D_{2}\}$ be   a collection of partitions of $d \in \mathbb{Z}^{+}$, with:}\\
  \textrm{$\nu(\mathscr{D})=d-1$,
$\nu(D_1)\geq \nu(D_2)>0$ and   $\gcd(D_1)\ne 1$  a proper divisor of $d$.} \\
\textrm{Let $\alpha \in D_1, \beta \in D_2$ be permutations such that
$\langle \alpha, \beta \rangle$ is not transitive but}\\
\textrm{and let  $\omega$ such that $G:=\langle \alpha, \beta, \omega| \alpha \beta= \omega^2\rangle$ is transitive.}
\end{array}
\end{equation}

 \vspace{5pt}
\noindent We will show that  $G$ is imprimitive. Equivalently, 
 the associated   primitive branched covering   is
 decomposable.\\
 \noindent  To show this, we will use as a  strategy the contrapositive of the following proposition:

\begin{Prop}\cite[Proposition 8.7]{Wi}\label{HW} If $G$ is primitive on $\Omega$ and $x$ and $y$ are different points of $\Omega$, then either $G=\langle G_x,G_y \rangle$ or $G$ is a regular  
group of prime degree. \qed
\end{Prop}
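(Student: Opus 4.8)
The plan is to leverage the maximality characterization of point stabilizers (Proposition \ref{dixon}) to split into two cases, and then to analyze the degenerate case through the fixed-point set of a stabilizer.

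First I would set $H := \langle G_x, G_y \rangle$ and observe $G_x \leq H \leq G$. Since $x \neq y$ we have $|\Omega| \geq 2$, so Proposition \ref{dixon} applies and, as $G$ is primitive, the stabilizer $G_x$ is a maximal subgroup of $G$. Maximality forces $H = G_x$ or $H = G$. If $H = G$ then the first alternative of the conclusion holds and we are done, so for the rest of the argument I assume $H = G_x$, i.e. $G_y \leq G_x$. Because $G$ is transitive all point stabilizers are conjugate and hence have the same order, so $G_y \leq G_x$ upgrades to $G_x = G_y$.

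Next I would exploit $G_x = G_y$ by studying the set $\Delta := \mathrm{Fix}(G_x) = \{z \in \Omega : z^{\mu} = z \text{ for all } \mu \in G_x\}$. Using transitivity, the inclusion $G_x \leq G_z$ that holds for each $z \in \Delta$ upgrades to an equality, so $\Delta = \{z \in \Omega : G_z = G_x\}$; in particular $\Delta$ contains both $x$ and $y$, whence $|\Delta| \geq 2$. The key computation is that for every $g \in G$ one has $\Delta^g = \mathrm{Fix}(G_x)^g = \mathrm{Fix}(G_{x^g})$. If $\Delta^g \cap \Delta \neq \emptyset$, choosing $z$ in the intersection gives $G_z = G_x$ together with $G_{x^g} \leq G_z$, so $G_{x^g} = G_x$ by the equal-order argument, and therefore $\Delta^g = \mathrm{Fix}(G_{x^g}) = \Delta$. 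Thus $\Delta$ is a block of $G$. Primitivity forces $\Delta$ to be trivial, and since $|\Delta| \geq 2$ this means $\Delta = \Omega$; that is, every element of $G_x$ fixes every point of $\Omega$. As the elements of $G$ are genuine permutations of $\Omega$, the only such element is the identity, so $G_x = 1$ and $G$ is regular.

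Finally I would show that a primitive regular group has prime degree. For a regular group the point stabilizer is trivial, so by the standard correspondence between blocks through a point and intermediate subgroups, the nontrivial block systems of $G$ correspond exactly to the proper nontrivial subgroups of $G$. Primitivity therefore means $G$ has no proper nontrivial subgroup, so $G$ is cyclic of prime order, and its degree $|\Omega| = |G|$ is prime; this gives the second alternative. I expect the main obstacle to be the middle step, namely verifying cleanly that $\mathrm{Fix}(G_x)$ is a block and extracting $G_x = 1$ from primitivity, since that is precisely where the hypotheses $G_x = G_y$ and $x \neq y$ are converted into the structural conclusion of regularity; the subgroup–block correspondence invoked at the end is routine, but must be applied carefully to conclude that the degree is prime rather than merely that $G$ is regular.
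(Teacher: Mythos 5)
Your proof is correct and complete. The paper itself gives no argument for this statement---it is quoted from \cite{Wi} (Proposition 8.7) with a \qed---and your proof is essentially the classical one behind Wielandt's result: maximality of $G_x$ via Proposition \ref{dixon} reduces to the case $G_y\le G_x$, conjugacy of stabilizers under transitivity upgrades this to $G_x=G_y$, the set ${\rm Fix}(G_x)$ is then a block containing the two distinct points $x,y$ and hence equals $\Omega$ by primitivity, forcing $G_x=1$, and a regular primitive group has no proper nontrivial subgroups (equivalently, re-apply Proposition \ref{dixon} with $G_x=1$), so it is cyclic of prime order and of prime degree. All the delicate steps you flagged---the block property of ${\rm Fix}(G_x)$ under $\Delta^g={\rm Fix}(G_{x^g})$ and the equal-order argument, which uses finiteness of $G<\Sigma_d$---are carried out correctly.
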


\noindent A  permutation group $G$ on $\Omega$ is called {\it regular } if for every $\alpha\in \Omega$, $G_{\alpha}=1$, and it is transitive. \\

\noindent For the case in (\ref{LC}) we have that:
 $d \in \mathbb{Z}^+$ is  not prime (because $\gcd(D_1)$ is  a proper divisor of $d$), $\beta$ is not trivial  (because $\nu(D_2)>0$ ) and  ${\rm Fix}\, \beta \neq \emptyset$ (from Remark \ref{numberrestr} item b). Then $G$ is not a regular group of prime degree. Thus,  in order to show that $G$ is imprimitive, it suffices   to prove that: \\
 \centerline{\it there is $x,\, y \in \Omega$ such that $G\neq \langle G_x,G_y \rangle$.}
 
 \vspace{7pt}
\noindent We will start by derive some properties of $G=\langle \alpha, \beta, \omega| \alpha \beta= \omega^2\rangle$ as result of the fact that it satisfies:
``$\langle \alpha, \beta \rangle$ is not transitive but
 $\langle \alpha, \beta, \omega| \alpha \beta= \omega^2\rangle$ is", as stated in (\ref{LC}).

\begin{Rm}[About $\omega$]\label{omega} 

\vspace{3pt}
\noindent  From the relation  $ \alpha \beta= \omega^2$, once $\omega$ exists,  its cyclic structure 
depends on the cyclic structure of $\alpha \beta.$ Moreover:
  \begin{enumerate}[noitemsep, leftmargin=15pt]
\item Let $H:=\langle \alpha, \beta \rangle$, and let us consider the partition given by the orbits of $H$. Notice that  each of these orbits 
 is defined by  the union of cycles of   $\alpha \beta$. Once $H$ is not transitive, then this partition of the 
cycles of  $\alpha \beta$ contains at least two classes. A cycle of one class need to be connected to a cycle of another class. So this implies that there is 
 a cycle  $\pi_1$ of length $\kappa$,   
and    there exists another cycle $\pi_2$ in $\alpha \beta$, with the same length but contained in another orbit,   such that $\omega$ connects them. This is, if\\
\centerline{$\alpha \beta= \dots \underbrace{(a_1 \dots a_{\kappa})}_{\pi_1}\dots \underbrace{(b_1 \dots b_{\kappa})}_{\pi_2} \dots $}\\
then $\omega$ will intersperse them, i.e.:\\
\centerline{$\omega=\dots ( a_1\; b_1 \dots a_{\kappa} \; b_{\kappa}) \dots  $}
\item Hence, $\omega$ will have two types of cycles:
\begin{itemize} [noitemsep, leftmargin=15pt]
\item type 1 cycles: cycles  connecting different orbits of $\langle \alpha, \beta \rangle$, the set of this type of cycles is not empty,
\item type 2 cycles: cycles whose support is in an orbit  of $\langle \alpha, \beta \rangle$.
\end{itemize}
\item If $\langle \alpha, \beta \rangle$ has $t$ orbits, then $\omega$ has to make $t-1$ connections.
\end{enumerate}
\end{Rm}

\begin{Rm}[About the orbits of $\langle \alpha, \beta \rangle$]\label{orbitas} Let us denote by $\Lambda_0$ the set of orbits of $\langle \alpha \rangle$ and by $\Lambda_j$ the set of orbits of $\langle \alpha, \prod_{i=1}^j\beta_i  \rangle$, recall $\beta=\beta_1\dots \beta_s \in D_2$,  with $\beta_j$ a $e_j$-cycle for $j=1,\dots,s.$ To understand the orbits of $\langle \alpha, \beta\rangle$, notice that:
  \begin{enumerate}[noitemsep, leftmargin=15pt]
  \item $\# \Lambda_0=r.$
  \item The orbits of $\langle \alpha, \beta \rangle$ are
   union of sets which are elements of 
   $\Lambda_0$.
      Hence  the number of orbits of $\langle \alpha, \beta \rangle$ is  $\# \Lambda_s \leq r$. 
    \item Let  $n_j:= \# \{O \in \Lambda_{j-1}: {\rm Supp}\, \beta_j \cap O \neq \emptyset\}$, for $j=1, \dots, s$. Notice that: 
  \begin{enumerate}
  \item $n_j \leq e_j$ and
if $e_j-n_j>0$ then ${\rm Supp}\, \beta_j$ ``multi-intersects" (intersects more than once) the same orbit of $\Lambda_{j-1}$. We will say that $|{\rm Supp}\, \beta_j|-n_j$ is {\it ``the number of multi-intersections"} of ${\rm Supp}\, \beta_j$ on $\Lambda_{j-1}$.
   \item $\# \Lambda_j=\# \Lambda_{j-1}-n_{j}+1$. Thus,
 $ \# \Lambda_s=r+s-\sum_{j=1}^s n_j.$
 \end{enumerate}
 \item From (\ref{e}):
 $r=1-s+\sum_{j=1}^s e_j ,$ then in the equality above we obtain: 
\begin{eqnarray}\label{No-orbitas}
 \# \Lambda_s=1+\sum_{j=1}^s (e_j-n_j).
 \end{eqnarray}
 \noindent  The  non transitivity of $\langle \alpha, \beta \rangle$ implies,  by Corollary \ref{b_i}, on  the existence of a cycle of $\beta$ whose support intersects more than once some cycle of $\alpha$, i.e. $ \# \Lambda_s >1.$
 \end{enumerate}
 \end{Rm}
 \vspace{5pt}

Given a $p$-cycle $\epsilon$ and a permutation $\gamma$ consider the intersection of ${\rm Supp}\, \epsilon$ with each cycle of $\gamma$ (including the trivial ones). We will  say that {\it the  collection of these intersections is a decomposition of $\epsilon$ in $\gamma$}  and the number of sets in the collection is called {\it the division number of ${\rm Supp}\, \epsilon$ in $\gamma$}.
\begin{Lemma}\label{intersection} Let $\epsilon$, $\delta \in {\rm Sym}_{\Omega}$ be a $p$-cycle and   a $q$-cycle, respectively.  If   the cardinality of ${\rm Supp}\, \epsilon \cap {\rm Supp}\, \delta$ is $z>1$  then  in $\epsilon \delta$ the support of $\epsilon$ is divided in at most $z$ cycles.
\end{Lemma}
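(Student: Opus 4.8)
The plan is to exploit the fact that every element of ${\rm Supp}\,\epsilon$ that does \emph{not} lie in ${\rm Supp}\,\delta$ is fixed by $\delta$, so that the product $\epsilon\delta$ (read, as throughout the paper, via the right action $x^{\epsilon\delta}=(x^{\epsilon})^{\delta}$) simply follows $\epsilon$ except when it lands on one of the $z$ common points. First I would fix a cyclic writing $\epsilon=(a_1\,a_2\,\dots\,a_p)$ with $a_j^{\epsilon}=a_{j+1}$, indices read modulo $p$, and let $a_{i_1},\dots,a_{i_z}$ denote the $z$ elements of ${\rm Supp}\,\epsilon\cap{\rm Supp}\,\delta$ listed in the cyclic order in which they occur along $\epsilon$.

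These $z$ points cut the cycle $\epsilon$ into exactly $z$ consecutive arcs $A_1,\dots,A_z$, where $A_k=\{a_{i_k},a_{i_k+1},\dots,a_{i_{k+1}-1}\}$ (indices cyclic, so $i_{z+1}:=i_1$) begins at a common point and runs through the subsequent points of ${\rm Supp}\,\epsilon$ up to, but not including, the next common point. An arc reduces to a single element when two common points are adjacent in $\epsilon$; this degenerate case is harmless and needs no separate treatment. Thus the arcs $A_1,\dots,A_z$ partition ${\rm Supp}\,\epsilon$.

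The key step is to observe that each arc lies inside a single cycle of $\epsilon\delta$. Indeed, if $a_{j+1}\notin{\rm Supp}\,\delta$, that is, if $a_{j+1}$ is not one of the common points, then $a_{j+1}$ is fixed by $\delta$ and hence $a_j^{\epsilon\delta}=(a_{j+1})^{\delta}=a_{j+1}$. Consequently, within a fixed arc $A_k$ the elements $a_{i_k},a_{i_k+1},\dots,a_{i_{k+1}-1}$ are sent one to the next successively by $\epsilon\delta$, so they all belong to the same cycle of $\epsilon\delta$; only at the last element does $\epsilon\delta$ leave the arc, since $a_{i_{k+1}-1}^{\epsilon\delta}=a_{i_{k+1}}^{\delta}$ with $a_{i_{k+1}}\in{\rm Supp}\,\delta$.

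Finally, since the $z$ arcs exhaust ${\rm Supp}\,\epsilon$ and each is contained in one cycle of $\epsilon\delta$, the elements of ${\rm Supp}\,\epsilon$ are spread over at most $z$ distinct cycles of $\epsilon\delta$ (fewer when two arcs happen to fall in the same cycle), which is exactly the assertion. I do not anticipate a genuine obstacle here: the whole argument rests on the single observation that $\delta$ fixes every point of ${\rm Supp}\,\epsilon$ outside the intersection, so the only places at which $\epsilon\delta$ can break the $\epsilon$-ordering are the $z$ common points; the only mild care needed is in the cyclic indexing and in keeping the convention for composing permutations consistent with the rest of the paper.
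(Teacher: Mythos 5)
Your proof is correct, and it takes a genuinely different route from the paper's. You partition ${\rm Supp}\,\epsilon$ into the $z$ arcs cut out by the common points and observe that, since $\delta$ fixes every point of ${\rm Supp}\,\epsilon$ outside ${\rm Supp}\,\delta$, the product $\epsilon\delta$ traverses each arc consecutively, so each arc lies inside a single cycle of $\epsilon\delta$ and the bound follows at once. The paper instead fixes one common point $u$, writes $\delta=(u\;u^{\delta})(u\;u^{\delta^{2}})\cdots(u\;u^{\delta^{q-1}})$ as a product of transpositions through $u$, and tracks case by case how multiplication by each successive transposition splits or merges the cycles meeting ${\rm Supp}\,\epsilon$. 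Your argument is shorter and eliminates that case analysis entirely; what the paper's incremental bookkeeping buys is finer information about extremality, namely that the value $z$ is attained exactly when the intersection points occur in opposite cyclic orders along $\epsilon$ and $\delta$ (``any other order implies the intersection of less than $z$ cycles''), and this one-intersection-at-a-time viewpoint is the one invoked later in the proof of Proposition \ref{N}, where each multi-intersection is counted as producing at most two cycles. Two details on your side are handled correctly and deserve the care you gave them: your composition convention $x^{\epsilon\delta}=(x^{\epsilon})^{\delta}$ agrees with the paper's, and degenerate singleton arcs (adjacent common points) cause no harm, so your argument has no gaps.
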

\begin{proof}
Let $ u \in {\rm Supp}\, \epsilon\, \cap\, {\rm Supp}\, \delta$. Suppose that $\epsilon=(u\; u^{\epsilon}\dots u^{\epsilon^{p-1}})$ and  $\delta=(u\; u^{\delta}\dots u^{\delta^{q-1}})$. Notice that $\delta=(u\; u^{\delta})(u\;u^{\delta^2}) \dots (u\; u^{\delta^{q-1}})$ and that:
\begin{enumerate}[noitemsep, leftmargin=15pt]
\item If $u^{\delta^*} \notin {\rm Supp}\, \epsilon$ then $\epsilon (u\, u^{\delta^*} )=(u\; u^{\epsilon}\dots u^{\epsilon^{p-1}}\, u^{\delta^*})$ 
 is a cycle and its support contains  the  
support of $\epsilon$.
\item If $u^{\delta^{\aleph_1}}= u^{\epsilon^{\ell_{1}}} \in {\rm Supp}\, \epsilon\, \cap\, {\rm Supp}\, \delta$ then  $$\epsilon\,(u\; u^{\delta^{\aleph_1}})=(u \dots (u^{\delta^{\aleph_1}})^{\epsilon^{-1}})(u^{\delta^{\aleph_1}} \dots u^{\epsilon^{p-1}})=(u \dots u^{\epsilon^{\ell_1-1}})(u^{\epsilon^{\ell_1}} \dots u^{\epsilon^{p-1}})$$ and the support of $\epsilon$  intercepts two cycles. Fixed points could occur, it depends on who is $u^{\epsilon^{\ell_1}}$.
\item Let $\aleph_2 > \aleph_1$ such that  $u^{\delta^{\aleph_2}}= u^{\epsilon^{\ell_{2}}} \in {\rm Supp}\, \epsilon\, \cap\, {\rm Supp}\, \delta$ then:
$$\epsilon\,(u\; u^{\delta^{\aleph_1}})(u \; u^{\delta^{\aleph_2}})=(u \dots u^{\epsilon^{\ell_1-1}})(u^{\epsilon^{\ell_1}} \dots u^{\epsilon^{p-1}})(u \; u^{\epsilon
^{\ell_2}})=$$
\begin{equation*}
= 
\begin{cases}
(u \dots u^{\epsilon^{\ell_1-1}}\, u^{\epsilon^{\ell_2}} \dots u^{\epsilon^{\ell_2-1}}),& \textrm{ if $\ell_1 < \ell_2$}\\
(u \dots u^{\epsilon^{\ell_2-1}})(u^{\epsilon^{\ell_2}} \dots u^{\epsilon^{\ell_1-1}})(u^{\epsilon^{\ell_1}} \dots u^{\epsilon^{p-1}}), & \textrm{ if $\ell_2 < \ell_1$.}
\end{cases}
\end{equation*}
\end{enumerate}
Hence if $u^{\delta^{\aleph_i}}= u^{\epsilon^{\ell_{i}}} \in {\rm Supp}\, \epsilon\, \cap\, {\rm Supp}\, \delta$, for $i=1, \dots, z$, with  $\aleph_z > \dots > \aleph_2 > \aleph_1$ and 
$\ell_z< \dots < \ell_2 < \ell_1$   then, considering the cases above, in $\epsilon \delta$ the support of $\epsilon$ intercepts exactly   $z$ cycles. Any other order in $\{\ell_1, \dots, \ell_z\}$ implies the intersection of less than $z$ cycles.
\end{proof}

\begin{Prop}\label{N} Let $\mathscr{D}=\{D_{1},D_{2}\}$  be a branch datum such that $\nu(\mathscr{D})=d-1$  and    $\gcd(D_1) \neq 1$. Let $\alpha \in D_1$ and $\beta \in D_2$. If
 $\langle \alpha, \beta \rangle$ has $t$ orbits and there exists $\omega$ such that $\langle \alpha, \beta, \omega | \omega^2=\alpha \beta \rangle$  is transitive then $\alpha \beta$ is the product of $2t-1$ cycles.
\end{Prop}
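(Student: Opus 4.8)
The plan is to determine the number of cycles of $\alpha\beta$ — write $c(\sigma)$ for the number of cycles (fixed points included) of a permutation $\sigma$ — by trapping $c(\alpha\beta)$ between two estimates coming from two different structures: a Riemann--Hurwitz count over the orbits of $\langle\alpha,\beta\rangle$, and the cycle structure of $\omega$ forced by the transitivity of $\langle\alpha,\beta,\omega\rangle$. I would begin by recording the elementary identity $c(\alpha)+c(\beta)=d+1$: since $c(\sigma)=d-\nu(\sigma)$ for any $\sigma\in\Sigma_d$, the hypothesis $\nu(\mathscr{D})=\nu(D_1)+\nu(D_2)=d-1$ gives $c(\alpha)+c(\beta)=2d-(d-1)=d+1$. (In particular $d-1$ is even, so $d$ is odd, as in all of \S\ref{leq2}.)

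\emph{Upper estimate.} Consider the degree-$d$ branched covering of $S^2$ with three branch points and monodromy $\alpha,\beta,(\alpha\beta)^{-1}$ (these have product $1$). Its connected components are in bijection with the $t$ orbits $\mathscr{O}_1,\dots,\mathscr{O}_t$ of $\langle\alpha,\beta\rangle$; each component is a closed orientable surface of some genus $g_k\geq 0$ and degree $d_k$, with $\sum_k d_k=d$. Applying the Riemann--Hurwitz formula (\ref{rhf}) with $N=S^2$ to each component and summing, and using $c((\alpha\beta)^{-1})=c(\alpha\beta)$ together with the identity above, I get
\[
2t-2\sum_k g_k=-d+c(\alpha)+c(\beta)+c(\alpha\beta)=1+c(\alpha\beta),
\]
whence $c(\alpha\beta)=2t-1-2\sum_k g_k\leq 2t-1$. (The same upper bound can be produced combinatorially by writing $\alpha\beta=\alpha\beta_1\cdots\beta_s$ and summing the per-step inequality $c(\alpha\beta_1\cdots\beta_j)\leq c(\alpha\beta_1\cdots\beta_{j-1})+e_j-2n_j+1$, with $n_j$ the number of orbits of $\Lambda_{j-1}$ met by $\beta_j$, cf. Remark \ref{orbitas}.)

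\emph{Lower estimate via $\omega$.} Since $\alpha\beta=\omega^2\in\langle\alpha,\beta\rangle$, every orbit $\mathscr{O}_k$ is invariant under $\alpha\beta$, so each cycle of $\alpha\beta$ lies inside a single orbit. I would then analyse a cycle $C$ of $\omega$: if $|C|$ is odd then $\omega^2$ restricted to ${\rm Supp}\,C$ is a single cycle of $\alpha\beta$, confined to one orbit; if $|C|$ is even then it is a pair of equal-length cycles of $\alpha\beta$, each confined to one orbit. In either case ${\rm Supp}\,C$ meets at most two orbits, and $C$ joins two \emph{distinct} orbits only when $|C|$ is even (these are the ``type 1'' cycles of Remark \ref{omega}). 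Transitivity of $\langle\alpha,\beta,\omega\rangle$ forces the orbit-joining cycles of $\omega$ to connect all $t$ orbits, so there are at least $t-1$ of them; each contributes two distinct cycles to $\alpha\beta=\omega^2$ (one in each of the two orbits it joins), and distinct joining cycles contribute disjointly. Hence $c(\alpha\beta)\geq 2(t-1)=2t-2$.

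Combining the two estimates gives $2t-2\leq c(\alpha\beta)=2t-1-2\sum_k g_k$, which forces $\sum_k g_k=0$ and therefore $c(\alpha\beta)=2t-1$, proving that $\alpha\beta$ is a product of $2t-1$ cycles. I expect the main obstacle to be the lower estimate: one must argue carefully, from $\alpha\beta=\omega^2$ and the $\alpha\beta$-invariance of the orbits, that each cycle of $\omega$ touches at most two orbits and that exactly the even cycles can join distinct orbits, so that transitivity yields at least $t-1$ of them — this is precisely where Remark \ref{omega} and the parity of $d$ are used. The upper estimate is routine once the auxiliary $S^2$-covering (or the per-step cycle count) is in place.
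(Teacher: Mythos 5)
Your proof is correct, but it reaches the conclusion by a partly different route than the paper's. The lower bound $c(\alpha\beta)\geq 2(t-1)$ is essentially the paper's own argument (Remark \ref{omega}): each cycle of $\omega$ squares into at most two cycles of $\alpha\beta$ and hence meets at most two orbits, only even cycles can join distinct orbits, and transitivity forces at least $t-1$ such joining cycles, each contributing a disjoint pair of cycles of $\alpha\beta$. The upper bound is where you genuinely diverge: the paper argues combinatorially --- by (\ref{No-orbitas}) the permutation $\beta$ makes exactly $t-1$ multi-intersections, Lemma \ref{intersection} shows each produces at most two cycles of $\alpha\beta$, and one orbit without multi-intersections remains a single cycle --- whereas you apply Riemann--Hurwitz to the auxiliary three-branch-point covering of $S^2$ with monodromy $\alpha,\beta,(\alpha\beta)^{-1}$, whose components correspond to the $t$ orbits, obtaining the exact identity $c(\alpha\beta)=2t-1-2\sum_k g_k$. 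This buys you two things: the paper's final parity step (``$\omega^2$ is an even permutation and $d$ is odd, hence $N$ is odd'') becomes unnecessary, since your identity already carries the right parity and the lower bound forces $\sum_k g_k=0$; and the statement acquires a transparent geometric meaning (every component of the auxiliary covering is a sphere). What the paper's combinatorial route buys in exchange: it stays entirely inside the permutation-group machinery (Remark \ref{orbitas}, Lemma \ref{intersection}) that the paper reuses --- in particular, item (b) of its proof (some orbit is the support of a single cycle of $\alpha$ which is also a cycle of $\alpha\beta$) is invoked again at the start of the proof of Proposition \ref{dospuntos}, and the whole argument transfers verbatim to the $k$-partition generalization in the Appendix (Proposition \ref{N-g}); your topological argument proves the proposition as stated but does not directly yield that structural byproduct. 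Your parenthetical per-step inequality $c(\alpha\beta_1\cdots\beta_j)\leq c(\alpha\beta_1\cdots\beta_{j-1})+e_j-2n_j+1$ is also correct (write $\beta_j$ as $e_j-1$ transpositions, of which at least $n_j-1$ must merge cycles lying in distinct orbits), and summing it does recover the paper-style combinatorial bound $c(\alpha\beta)\leq 2t-1$.
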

\begin{proof} Using the notation in Remark \ref{orbitas}, if $\# \Lambda_s=t$  in (\ref{No-orbitas}) we have that:
\begin{enumerate}[noitemsep, leftmargin=15pt]
\item[(a)]  $\sum_{j=1}^s (e_j-n_j)=t-1$ is the total number of multi-intersections of $\beta$ (see Remark \ref{orbitas} item 3a) and, by Lemma \ref{intersection},  each multi-intersection could produce at most two cycles in $\alpha \beta$. Then $t-1$ multi-intersections  could produce at most $2(t-1)$  cycles in $\alpha \beta$. 
\item[(b)] There is at least  one orbit of $\langle \alpha, \beta \rangle$ without multi-intersections.  Then, this orbit is entirely defined by the support of  a cycle  of $\alpha$, and this cycle is also a cycle of $\alpha \beta$.
\end{enumerate}
From (a) e (b)   the number $N$ of cycles of $\alpha \beta$  satisfy $ N \leq 2(t-1)+1$,  because $\langle \alpha, \beta \rangle$ has $t$ orbits . On the other hand by Remark \ref{omega} (item 3) we have  $ 2(t-1) \leq N.$
Now,  since $\alpha \beta=\omega^2$ is an even permutation and $d$ is odd, then  $N$ is odd and $N=2(t-1)+1$.
  \end{proof}
  
\noindent  Proposition \ref{N} implies that
$\omega$ is the product of $t$ cycles: $t-1$ of them are the type 1 and only one cycle of type 2 (see Remark \ref{omega} item 2). This  exhibit the minimal structure of $\omega$ to guarantee the  transitivity, i.e.: if $\omega=\omega_1\, \dots \, \omega_t$ and $\omega_{i}$ is of type 1, for $i=1,\dots,t-1$, then:
\vspace{-0.5cm}
\begin{eqnarray}\label{minimality}
 \textrm{ $\omega_{i}$ connects exactly two orbits of $\langle \alpha, \beta, \prod_{j=1}^{i-1} \omega_j \rangle$},
 \end{eqnarray} 
 
 \vspace{-0.5cm}
\noindent see Remark \ref{omega}.

   \begin{Prop}\label{dospuntos}
 Let  $G=\langle \alpha, \beta, \omega| \omega^2=\alpha \beta \rangle$ be  a transitive permutation group on $\Omega$, with  $|\Omega|=d$ odd. If $\gcd(\alpha)$ is a proper divisor of $d$ and  the subgroup  $\langle \alpha, \beta \rangle$ acts on $\Omega$ with $t>1$ orbits, then there exist $x,\, y \in \Omega$ such that $G\neq \langle G_x,G_y \rangle$.
 \end{Prop}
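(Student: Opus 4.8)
\noindent The plan is to produce the required pair by first constructing a nontrivial $G$-invariant block system and then choosing the two points inside a single block. Put $k:=\gcd(\alpha)=\gcd(D_1)$. By hypothesis $1<k<d$, and since $k\mid c_i$ for every cycle length of $\alpha$ we have $k\mid\sum_i c_i=d$; as $d$ is odd, $k$ is odd, a fact I will exploit. Note also that every cycle of $\alpha$ has length $\geq k\geq 3$, so ${\rm Supp}\,\alpha=\Omega$. Suppose I can build a map $c:\Omega\to\mathbb{Z}/k$, balanced (each value attained $d/k$ times), under which each of $\alpha,\beta,\omega$ permutes the fibres $c^{-1}(j)$; then these $k$ fibres form a partition of $\Omega$ into blocks of size $d/k>1$ preserved by $G=\langle\alpha,\beta,\omega\rangle$, so $G$ is imprimitive. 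Since $G$ is transitive it is transitive on the $k>1$ blocks, hence for a block $B$ the setwise stabilizer $G_{\{B\}}$ has index $k$ and is proper. Picking distinct $x,y\in B$, any $g\in G_x$ fixes $x\in B$, so $B^g\cap B\ni x$ forces $B^g=B$; thus $G_x\leq G_{\{B\}}$, and likewise $G_y\leq G_{\{B\}}$. Therefore $\langle G_x,G_y\rangle\leq G_{\{B\}}\subsetneq G$, which is exactly the required conclusion (and feeds Proposition \ref{HW} in the surrounding argument).

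\noindent To build $c$ I first define it along the cycles of $\alpha$ by requiring $c(x^{\alpha})=c(x)+1$; this is consistent on each cycle because its length is divisible by $k$, and it leaves one free additive offset $o(A)\in\mathbb{Z}/k$ for each cycle $A$ of $\alpha$. I then ask that $\beta$ and $\omega$ act on colours as translations $\sigma(\beta)=b$ and $\sigma(\omega)=w$; the relation $\omega^2=\alpha\beta$ forces $2w\equiv 1+b\pmod k$, which is solvable for $w$ \emph{precisely because $k$ is odd}, so $2$ is invertible. Each adjacency $a\mapsto a^{\beta}$ inside a cycle of $\beta$, and each adjacency inside a cycle of $\omega$, becomes an affine equation linking the offsets of the two $\alpha$-cycles it joins; well-definedness of $c$ is the compatibility of this linear system over $\mathbb{Z}/k$. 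In particular a cycle of $\beta$ meeting two \emph{distinct} $\alpha$-cycles once each (a transposition, say) only equates offsets, and over an odd $k$ any internal relation of the form $2b\equiv 0$ forces $b=0$, so the simplest pieces never obstruct.

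\noindent The heart of the matter, and the step I expect to be the main obstacle, is the global consistency of this offset system. I would organise it orbit by orbit for $H=\langle\alpha,\beta\rangle$. Within a single $H$-orbit the $\beta$-constraints propagate the offsets from one base cycle and fix $b$; the danger is a cycle of $\beta$ that meets an orbit in several points, i.e.\ a multi-intersection in the sense of Remark \ref{orbitas}. This is exactly where the \emph{existence} of $\omega$, namely that $\alpha\beta$ is a genuine square, does the work: by Proposition \ref{N} the number of cycles of $\alpha\beta$ is pinned to $2t-1$, and combined with Lemma \ref{intersection} and the count of multi-intersections in \eqref{No-orbitas} the cycle structure is so constrained that the inconsistent configurations cannot arise (for instance a transposition of $\beta$ internal to one $\alpha$-cycle would leave an unsquarable $2$-cycle in $\alpha\beta$, contradicting that $\alpha\beta=\omega^2$). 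The offsets on different $H$-orbits, which the $\beta$-equations leave independent, are then tied together by the type-$1$ cycles of $\omega$ of Remark \ref{omega}: by the minimality expressed in \eqref{minimality} each such cycle merges two so-far-independent orbits with a single equation, so I can propagate the colouring across all $t$ orbits by induction along the order in which $\omega$ joins them, using $w=(1+b)/2$ for the cross-orbit shift without ever over-determining the system. Establishing this consistency simultaneously within and across orbits is the one genuinely delicate point; everything else is the bookkeeping reduction above.
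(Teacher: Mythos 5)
Your route is genuinely different from the paper's: you try to exhibit a $G$-invariant block system (the fibres of a colouring $c:\Omega\to\mathbb{Z}/k$, $k=\gcd(\alpha)$) and then take $x,y$ in one block, whereas the paper never constructs blocks at all --- it takes $x$ in an orbit of $\langle\alpha,\beta\rangle$ that is a single cycle $\pi_1$ of $\alpha\beta$, takes $y$ in the equal-length cycle $\pi_2$ that $\omega$ glues to $\pi_1$ (Proposition \ref{N}, Remark \ref{omega}), and shows via a parity count of the occurrences of $\omega$, based on the minimality property (\ref{minimality}), that no element of $\langle G_x,G_y\rangle$ sends $x$ to $y$; imprimitivity is only deduced afterwards through Proposition \ref{HW}. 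The problem is that your proposal stops exactly at its central step. Since $\beta$ has fixed points (Remark \ref{numberrestr}(b)), your translation constant is forced to be $b=0$, so the requirement becomes: $c$ is constant on every cycle of $\beta$ while increasing by $1$ along every cycle of $\alpha$. Such a $c$ exists if and only if (i) no cycle of $\beta$ meets one $\alpha$-cycle in two points whose $\alpha$-distance is $\not\equiv 0\pmod k$, and (ii) no loop formed by several $\beta$-cycles through distinct $\alpha$-cycles has total position-shift $\not\equiv 0\pmod k$. These are precisely the multi-intersection configurations of Remark \ref{orbitas}, and the results you invoke --- Proposition \ref{N}, Lemma \ref{intersection}, and the count (\ref{No-orbitas}) --- only count how many multi-intersections there are; they say nothing about \emph{where} on an $\alpha$-cycle they land, which is what (i) and (ii) require. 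Declaring this "the one genuinely delicate point" and deferring it means the proposition is not proved: all of the content of the statement is in that point.

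Moreover, the one concrete argument you offer for why inconsistent configurations cannot occur is wrong. A transposition of $\beta$ inside an $\alpha$-cycle of length $a$, joining points at $\alpha$-distance $g$, splits that cycle into cycles of lengths $g$ and $a-g$ in $\alpha\beta$, not into a $2$-cycle; and even when a $2$-cycle does appear it can be squarable, by interleaving with another $2$-cycle of $\alpha\beta$ lying in a different orbit. What the hypotheses do yield is weaker than what you need and already takes an argument you do not give: by Proposition \ref{N} and Remark \ref{omega} the $t-1$ type-$1$ cycles of $\omega$ pair equal-length cycles of $\alpha\beta$ along a spanning tree of the $t$ orbits, and pruning that tree from its leaves (every orbit has size $\equiv 0 \pmod k$, being a union of $\alpha$-cycles) shows that \emph{every} cycle of $\alpha\beta$ has length divisible by $k$. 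That divisibility is a necessary condition for your colouring, and it does rule out my simplest attempts at counterexamples, but it is not obviously sufficient: when an orbit carries several multi-intersections, the cycles of $\alpha\beta$ are concatenations of partial arcs of $\alpha$-cycles, and divisibility of the total lengths does not immediately force the arc-by-arc congruences in (i) and (ii). So the colouring may well exist under the stated hypotheses, but establishing it is a substantial induction on the merge/split structure, not bookkeeping, and without it the proposal does not prove the proposition.
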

 \begin{proof} 
 As in the proof of Proposition \ref{N} item b, we note that there exists at least one orbit of $\langle \alpha, \beta \rangle$ defined by     the support of  a cycle $\pi_1$ of $\alpha \beta$. Let $x \in {\rm Supp}\, \pi_{1}$. From Remark \ref{omega}, there exists another cycle $\pi_2$ in $\alpha \beta$, with the same length of 
$\pi_1$ but contained in another orbit,  such that $\omega$ connects them. Let  $y \in {\rm Supp}\, \pi_2$. \\
 If  $G_x <G_y$  then $\langle G_x, G_y \rangle=\langle G_y \rangle \neq G$, since $\alpha \notin G_y$ because $\gcd(\alpha)\neq 1$. Analogous is  the case $G_y < G_x $.\\
 Let us suppose then that $G_x \not<G_y$ and $G_y \not<G_x$ 
 and let  $J:=\langle G_x, G_y \rangle$. We assert that for all $g \in J$: $x^{g} \neq y$.  To prove this,  we note that:
 \begin{enumerate}[noitemsep, leftmargin=15pt]
 \item $\omega$ connects the orbits $x^{\langle \alpha, \beta \rangle}$ and $y^{\langle \alpha, \beta \rangle}$. If  another $g \in G$ performs the same work but passing through other orbits, by the relation $\omega^2=\alpha \beta$ and $(\ref{minimality})$,  when we write $g$ as a product of the generators, the permutation  $\omega$ has to appear an odd number of times. 
 \item When writing an element of $G_x$ or $G_y$  as a product of generators of $G$, by the relation $\omega^2=\alpha \beta$ and $(\ref{minimality})$,  an even number (possibly zero) of $\omega$ must appear.
 \end{enumerate}
 Then there is no $g\in \langle G_x, G_y \rangle$ satisfying the itens above simultaneously. Hence  $\langle G_x, G_y \rangle$ is not transitive and then $G\neq \langle G_x,G_y \rangle$.
\end{proof} 

\subsubsection{Proof of Theorem \ref{iff2pt} } Let $\mathscr{D}=\{D_{1},D_{2}\}$  be a branch datum such that $\nu(\mathscr{D})=d-1$.\\
For the ``if part",  notice that if $\gcd(D_{1})\neq1$ ($\gcd(D_{12})$ is always $1$) then $d$ is not prime and from Proposition \ref{2-trans}, Proposition  \ref{dospuntos} and Proposition \ref{HW} every realization of $\mathscr{D}$ is given by an imprimitive permutation group. The same result  we obtain if  $\mathscr{D}= \{[2,\dots,2,1],[2,\dots,2,1]\}$, by Proposition \ref{special}.\\
The ``only if part"  follows from Corollary \ref{AD}. \qed

\section{Arbitrary number of branched points}\label{arbpt}
Let $\mathscr{D}=\{D_1,\dots,D_k\}$ be a collection of partitions of $d \in\mathbb{Z}^+$ odd non-prime and $k \geq 3$, such that $\nu(\mathscr{D})=d-1$. Let $\ell_i$ be the number of summands equal to $1$ in $D_i$: 
\begin{eqnarray}\label{Di}
D_i=[e_{i_1}, \dots, e_{i_{r_i}}, \underbrace{1,\dots,1}_{\ell_i}].
\end{eqnarray}

\vspace{-0.5cm}
\noindent Notice that:
\begin{enumerate}[noitemsep, leftmargin=20pt]
 \item[(I)] Since $k \geq 3$ and $\nu(\mathscr{D})=d-1$, there exists at most one $i \in \{1,\dots,k\}$ such that $\nu(D_i)\geq (d-1)/2$. In particular:
 \begin{enumerate} [noitemsep, leftmargin=15pt]
  \item there exists at most one $i $ such that $D_i=[2,\dots,2,1]$,
  \item there exists at most one $i $ such that  $\gcd(D_i)\neq 1$, and in this case $D_i$ will be the partition with biggest defect.
  \end{enumerate}
\item[(II)] If $\nu(D_j)\leq(d-1)/2$ then $\ell_j\neq 0$. Moreover if $\nu(D_j)<(d-1)/2$ then $\ell_j\geq 2$. This is because $D_j$ is a partition of $d$ with at least  $(d+1)/2$ summands.
\end{enumerate}

\noindent   Since  $\nu(D_i)=\sum_{j=1}^{r_i}(e_{i_{j}}-1)$ and $\nu(\mathscr{D})=\sum_{i=1}^k \nu(D_i)=d-1$ then 
\begin{eqnarray}\label{gen-trans}
r_1+ \ell_1=\Big(\sum_{i=2}^k \sum_{j=1}^{r_i }(e_{i_{j}}-1)\Big) +1.
\end{eqnarray}
\noindent Let  $\langle \alpha_1, \dots, \alpha_k, \omega | \omega^2=\prod_{i=1}^k \alpha_i\rangle$  be a monodromy group associated to a realization of $\mathscr{D}$, where $\alpha_i \in D_i$ and $\omega$ are permutations in $\Sigma_d$  (see Proposition \ref{2.1-EKS}).

\begin{Rm}
It follows from the proof of  \cite[Theorem 5.1]{EKS} that the realization of     $\mathscr{D}$ 
 claimed by  Theorem 5.1 is  by a group such that  $\langle \alpha_1, \dots, \alpha_k \rangle$ is transitive, for $\alpha_i \in D_i$. 
  But, as analized in \S\ref{2points} for $k=2$  (see for example  Lemma \ref{3cycles0}  and Proposi\c{c}\~ao \ref{N}), it could happen for $k>2$ that some data also admit a    realization by a group such that  $\langle \alpha_1, \dots, \alpha_k\rangle$ is not transitive.  This shows that we are considering a larger class of realizations  than the one in   \cite{EKS}. 
\end{Rm}

The purpose  of this section is to show the following result:

\begin{theo}\label{final} For $k\geq3$, let   $\mathscr{D}=\{D_1, \dots, D_k\}$ be a collection of partitions of $d \in \mathbb{Z}$   odd  non-prime,  such that $\nu(\mathscr{D})=d-1$. Then $\mathscr{D}$ is realizable by an indecomposable branched covering if, and only if $\gcd(D_i)=1$ for $i=1, \dots, k$.
\end{theo}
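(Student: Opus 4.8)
The plan is to prove both implications by passing, via Proposition \ref{yo}, from ``(in)decomposable'' to ``(im)primitive monodromy group'', and then to argue at the level of the permutation group $G=\langle \alpha_1,\dots,\alpha_k,\omega\mid \omega^2=\prod_{i}\alpha_i\rangle$ afforded by Proposition \ref{2.1-EKS}. Two structural remarks from observation (I) drive the whole argument: for $k\geq 3$ at most one $D_i$ can satisfy $\nu(D_i)\geq (d-1)/2$, hence at most one $D_i$ can have $\gcd(D_i)\neq 1$, and the exceptional datum $\{[2,\dots,2,1],[2,\dots,2,1]\}$ of Theorem \ref{iff2pt} simply cannot occur. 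So, relabelling so that $\nu(D_1)\geq\nu(D_i)$ for all $i$, the only obstruction to be ruled in or out is whether $\gcd(D_1)=1$.

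For the \emph{only if} part I would argue by contraposition: assume $m:=\gcd(D_1)>1$, necessarily a proper divisor of $d$, and show that every realization is decomposable. Fix $\alpha_i\in D_i$ and split on whether $H:=\langle\alpha_1,\dots,\alpha_k\rangle$ is transitive. If $H$ is transitive, a Riemann--Hurwitz (Euler characteristic) count applied to the relation $\alpha_1\cdots\alpha_k\,(\prod_i\alpha_i)^{-1}=1$ together with $\sum_i\nu(D_i)=d-1$ forces $\nu(\prod_i\alpha_i)=d-1$, i.e. $\prod_i\alpha_i$ is a $d$-cycle; this is the $k$-fold analogue of Lemma \ref{d-ciclo}. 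Then the orbit of a point under $(\prod_i\alpha_i)^m$ is a system of $m$ blocks of size $d/m$, and the divisibility $m\mid c_j$ for every cycle length $c_j$ of $\alpha_1$, propagated through the successive partial products exactly as in Proposition \ref{2-trans}, shows this system is preserved by each $\alpha_i$; hence $H$ is imprimitive. If $H$ is not transitive, $\omega$ is needed for transitivity, and I would invoke the $k$-partition versions of Proposition \ref{N} and Proposition \ref{dospuntos} (the orbit/cycle count for $\prod_i\alpha_i$ and the parity-of-$\omega$ argument) to produce $x,y\in\Omega$ with $G\neq\langle G_x,G_y\rangle$; since $d$ is non-prime, $G$ is not regular of prime degree, so Proposition \ref{HW} gives that $G$ is imprimitive. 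In both subcases Proposition \ref{yo} yields decomposability.

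For the \emph{if} part, assume $\gcd(D_i)=1$ for all $i$. The goal is to manufacture a primitive realization. The main device is to choose the $\alpha_i$ so that $\prod_i\alpha_i\in[d-2y,y,y]$ for some $y$ with $\gcd(y,d)=1$: granting such a choice, one builds $\omega$ with $\omega^2=\prod_i\alpha_i$ linking the two orbits exactly as in the final paragraph of the proof of Lemma \ref{3cycles0}, so that $G$ is transitive, and then Lemma \ref{gcd} certifies that $G$ is primitive; Proposition \ref{2.1-EKS} and Proposition \ref{yo} then provide the desired indecomposable covering. The construction of the $\alpha_i$ is the $k$-partition generalisation of Lemma \ref{3cycles0} carried out in the Appendix: $D_1$ (or whichever factor carries a part $\geq 3$) supplies the skeleton of a long cycle together with two $y$-cycles, while the remaining $\alpha_2,\dots,\alpha_k$---which by observation (II) have fixed points to spare---play the rôle of the connecting permutation $\beta$, splicing the cycles of $\alpha_1$ via Corollary \ref{b_i}. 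When every part of $D_1$ is at least $3$ (so no admissible $y$ is available from a pair of parts) I would instead appeal to the generalisation of Lemma \ref{C123}, using the algebraic factorisation to locate and then destroy any putative block system.

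The hard part is the generalised Lemma \ref{3cycles0}. Splitting the connecting work of a single $\beta$ among $k-1$ permutations, each confined to its own cyclic type $D_i$, and still landing the product exactly on $[d-2y,y,y]$ while keeping $\langle\alpha_1,\dots,\alpha_k,\omega\rangle$ transitive, is a delicate bookkeeping problem; the genuinely new difficulty relative to $k=2$ is the degenerate family in which \emph{no} $D_i$ has a part exceeding $2$ (so every $\alpha_i$ is an involution, e.g. $\mathscr{D}=\{[2,2,1,\dots,1]\}^{\times 4}$ for $d=9$), where the long cycle of the product must be assembled cooperatively from several involutions rather than read off a single large cycle. A secondary technical point, in the transitive subcase of the \emph{only if} direction, is to verify that the $m$-block system is preserved by \emph{each} generator $\alpha_i$ and not merely by $\alpha_1$ and the product $\prod_i\alpha_i$, since for $k\geq 3$ these two elements no longer generate the whole of $H$.
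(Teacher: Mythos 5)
Your \emph{only if} direction follows the paper's own route: split on whether $H=\langle\alpha_1,\dots,\alpha_k\rangle$ is transitive; in the transitive case use that $\prod_i\alpha_i$ is a $d$-cycle and build the block system from divisibility by $m=\gcd(D_1)$ (the paper does this by adapting Proposition \ref{2-trans} to the pair $\{D_1,L\}$, where $L$ is the cyclic structure of $\prod_{i\geq 2}\alpha_i$, and then checking each $\alpha_j$ preserves the blocks, exactly the point you flag); in the non-transitive case invoke the $k$-ary versions of Propositions \ref{N} and \ref{dospuntos} plus Proposition \ref{HW}, which is precisely what the paper's Appendix supplies. That half is sound and essentially identical to the paper.

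The genuine gap is in your \emph{if} direction. You propose to generalise Lemma \ref{3cycles0} (and, in the all-parts-$\geq 3$ case, Lemma \ref{C123}) to $k$ partitions, choosing the $\alpha_i$ directly so that $\prod_i\alpha_i\in[d-2y,y,y]$ with $\gcd(y,d)=1$; but you never carry this construction out --- you yourself call it ``a delicate bookkeeping problem'' and exhibit the family $\{[2,2,1,\dots,1]\}^{\times 4}$ for $d=9$, where no $D_i$ has a part exceeding $2$ and your sketch (skeleton from $D_1$, connections from the rest) has no large cycle to build on. Note also that the paper's Appendix contains no such generalisation: it only generalises the imprimitivity machinery (Remarks \ref{omega} and \ref{orbitas}, Propositions \ref{N} and \ref{dospuntos}), which serves the other direction. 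The paper sidesteps the entire difficulty with a reduction your proposal is missing: start from the EKS realization in which $H$ is transitive, collapse $\Lambda:=\prod_{i=2}^k\alpha_i$ and let $L$ be its cyclic structure; transitivity gives $\nu(L)=\sum_{i\geq 2}\nu(\alpha_i)$, and a short case analysis (with a judicious choice of the $\alpha_i$) yields $\gcd(L)=1$ and $\{D_1,L\}\neq\{[2,\dots,2,1],[2,\dots,2,1]\}$ --- in particular, in your degenerate all-involution example one has $\nu(L)>(d-1)/2$, which forces $L$ to contain a part bigger than $2$, so the obstruction dissolves by itself. The two-partition result already proved (Proposition \ref{2-indecomp}) then provides $\delta\in D_1$, $\lambda\in L$ and $\omega$ with $\langle\delta,\lambda,\omega\mid\omega^2=\delta\lambda\rangle$ primitive; picking $\eta$ with $\lambda=\eta^{-1}\Lambda\eta$ and setting $G=\langle\delta,\eta^{-1}\alpha_2\eta,\dots,\eta^{-1}\alpha_k\eta,\omega\rangle$, the relation $\omega^2=\delta\prod_{i\geq2}\eta^{-1}\alpha_i\eta$ holds and $G$ contains a primitive transitive subgroup, hence is primitive, and Propositions \ref{2.1-EKS} and \ref{yo} finish. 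Without this reduction (or a completed proof of your generalised Lemmas \ref{3cycles0} and \ref{C123}), your proof of the \emph{if} direction is incomplete exactly at its hardest point.
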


\noindent We start by proving the following Lemma:
\begin{Lemma} If $\langle \alpha_1, \dots, \alpha_k \rangle$ is transitive then for any subset $I \subset \{1,\dots,k\}$ we have: $\nu(\prod_{i \in I}\alpha_i)=\sum_{i\in I} \nu(\alpha_i)$. In particular $\nu(\prod_{i =1}^k\alpha_i)=\sum_{i=1}^k \nu(\alpha_i)=d-1$ and  $\prod_{i=1}^k \alpha_i$ is a $d$-cycle. 
\end{Lemma}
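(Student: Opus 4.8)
The inequality $\nu(\prod_{i\in I}\alpha_i)\le \sum_{i\in I}\nu(\alpha_i)$ is the easy direction and holds for every $I$ with no transitivity hypothesis at all: writing each $\alpha_i$ as a product of exactly $\nu(\alpha_i)$ transpositions (the minimal number, by the very definition of $\nu$ in \S\ref{PG}) and concatenating, one exhibits $\prod_{i\in I}\alpha_i$ as a product of $\sum_{i\in I}\nu(\alpha_i)$ transpositions, and $\nu$ of any permutation never exceeds the length of a transposition factorization. The whole content is therefore the reverse inequality, and the plan is to prove it through the combinatorics of these factorizations. I would fix, once and for all, a minimal factorization of each $\alpha_i$ into $\nu(\alpha_i)$ transpositions, and let $G$ be the graph on the vertex set $\{1,\dots,d\}$ whose edges are all the transpositions occurring in the factorizations of $\alpha_1,\dots,\alpha_k$; since $\sum_{i=1}^k\nu(\alpha_i)=\nu(\mathscr{D})=d-1$, the multigraph $G$ has exactly $d-1$ edges.

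The first key step is to show that transitivity forces $G$ to be a spanning tree. Every point of $\{1,\dots,d\}$ is moved by some $\alpha_i$ (otherwise it would be a one-point orbit, contradicting transitivity, as $d>1$), hence lies in the support of some transposition, so $G$ has no isolated vertices. Moreover the group generated by all these transpositions contains each $\alpha_i$, hence contains $\langle\alpha_1,\dots,\alpha_k\rangle$ and is transitive, so the connected components of $G$ are its orbits and $G$ is connected. A connected multigraph on $d$ vertices has at least $d-1$ edges, and $G$ has exactly $d-1$; therefore $G$ carries no repeated edge and is a tree.

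The second key step is the elementary fact that multiplying a permutation by a transposition whose two endpoints lie in different cycles \emph{merges} those cycles, raising $\nu$ by one, whereas endpoints in the same cycle \emph{split} it. From this I would record the consequence that, whenever a family of transpositions has acyclic support graph, for \emph{any} order of multiplication the number of cycles drops by one at each step, so the product has defect exactly equal to the number of transpositions. Indeed, the support of any cycle of a partial product is contained in a single connected component of the edges used so far, and when the support graph is a forest the endpoints of each newly applied transposition lie in distinct components — otherwise an existing path together with this edge would close a circuit — so every multiplication is a merge. Applying this to the subforest of $G$ spanned by the transpositions coming from $\{\alpha_i:i\in I\}$, which is acyclic and repetition-free because any edge-subset of a tree is, yields $\nu(\prod_{i\in I}\alpha_i)=\sum_{i\in I}\nu(\alpha_i)$; the case $I=\{1,\dots,k\}$ uses all $d-1$ edges of the tree, giving a single connected component, i.e.\ a $d$-cycle of defect $d-1$.

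The main obstacle is the passage from ``transitive group'' to ``connected support graph with exactly $d-1$ edges, hence a tree'': this is the point where the global hypothesis on $\langle\alpha_1,\dots,\alpha_k\rangle$ is converted into the rigid acyclic structure that makes the defect additive on every subfamily and, simultaneously, pins the full product down to a $d$-cycle. Once the tree is in hand the merging lemma is purely local and order-independent, so the additivity over arbitrary subsets and the ``$d$-cycle'' conclusion fall out together. I would also note that this conceptually recovers, and generalizes to $k$ factors, the two-generator statement of Lemma~\ref{d-ciclo}.
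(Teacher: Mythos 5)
Your proof is correct, and it takes a genuinely different route from the paper's. The paper's proof stays at the level of the cycles of the $\alpha_i$ themselves: starting from the numerical identity (\ref{gen-trans}) (a consequence of $\nu(\mathscr{D})=d-1$), it argues, exactly as in Lemma \ref{d-ciclo} and Corollary \ref{b_i} for $k=2$, that transitivity forces each non-trivial cycle of $\alpha_i$, $i\geq 2$, to connect distinct cycles of the partial product $\prod_{j=1}^{i-1}\alpha_j$; the cycle count therefore drops by exactly $|{\rm Supp}\,\alpha_i|-r_i=\nu(\alpha_i)$ at stage $i$, and by (\ref{gen-trans-alpha}) the full product is a $d$-cycle. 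In particular the paper verifies additivity of $\nu$ only along the initial segments $I=\{1,\dots,i\}$, multiplied in the given order, leaving the general-subset claim implicit. You refine the decomposition one level further, into transpositions: the budget $\sum_{i=1}^k\nu(\alpha_i)=d-1$ together with transitivity forces the union of the chosen factorizations to be a spanning tree on $\{1,\dots,d\}$, and your order-independent merging lemma for forests then gives $\nu(\prod_{i\in I}\alpha_i)=\sum_{i\in I}\nu(\alpha_i)$ uniformly for every subset $I$ (any edge-subset of a tree is again a forest) and identifies the full product as a $d$-cycle. Both arguments rest on the same coincidence --- the defect budget $d-1$ equals the minimal number of merges needed for connectivity --- but your version buys the arbitrary-subset statement and manifest independence of the order of multiplication, while the paper's version buys brevity and direct continuity with the $k=2$ machinery (Lemma \ref{d-ciclo}, Corollary \ref{b_i}) that the rest of Section \ref{arbpt} reuses.
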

\begin{proof} In fact, from (\ref{gen-trans}) we have: 
\begin{eqnarray}\label{gen-trans-alpha}
r_1+ \ell_1= \sum_{i=2}^k (|{\rm Supp}\, \alpha_i| - r_i) +1,
\end{eqnarray}
which means that:
for $i=2, \dots, k$,  the support of $\alpha_i$ is used to make $|{\rm Supp}\, \alpha_i| - r_i$ connections in the set of  cycles of  $\prod_{j=1}^{i-1} \alpha_j.$ Hence $ \prod_{j=1}^{i} \alpha_j $ has 
$r_1+ \ell_1-\Big(\sum_{j=2}^{i}  (|{\rm Supp}\, \alpha_j| - r_j)\Big)$ cycles and, by (\ref{gen-trans-alpha}), $ \prod_{j=1}^{k} \alpha_j $ will have only one cycle.
\end{proof}

So follows from the lemma above  that: $\nu(\prod_{i=1}^k \alpha_i)<\sum_{i=1}^k \nu(\alpha_i)=d-1$,    if   $\langle \alpha_1, \dots, \alpha_k\rangle$ is not transitive. 

\vspace{5pt}

\noindent Let us suppose that:
\begin{eqnarray}\label{ordem}
 \nu(D_1) \geq \nu(D_2)\geq \dots \geq \nu(D_k).
 \end{eqnarray}
 
 \vspace{8pt}

 {\it Proof of   Theorem \ref{final}.}
 
{\bf If $\gcd(D_i)=1$, for $i=1,\dots,k$} \\ Let us consider a monodromy group such that $\langle \alpha_1, \dots, \alpha_k \rangle$ is transitive (see \cite{EKS}, proof of Theorem 5.1).  Let $\Lambda:=\prod_{i=2}^k \alpha_i$ and let $L$ be the partition of $d$ determined by the cyclic structure of $\Lambda$.  From  $\nu(\mathscr{D})=d-1$ and (III) we have $\nu(L)=\sum_{i=2}^k \nu(\alpha_i)$. 
\begin{enumerate}[noitemsep, leftmargin=15pt]
\item If $\nu(D_1)\geq (d-1)/2$ and $D_1 \neq [2,\dots,2,1]$ (in this case the inequality is strict):  then $\nu(L) \leq (d-3)/2$. Then $L \neq [2,\dots,2,1]$ and    $\gcd(L)=1$,  see (II).

\item If  $D_1=[2,\dots,2,1]$: by (I)(a) necessarily  $D_j\neq [2,\dots,2,1]$ for $j=2,\dots,k$.  In this case $\nu(L)=\sum_{i=2}^k \nu(\alpha_i)= (d-1)/2$ and we assert that there is a choice of the $\alpha_i$'s such that $L$ has a summand bigger than $2$.  In fact, since $D_j$ has at least one summand equal to $2$, for $j=2, \dots, k$, it is enough in the  construction in (III)   to define $\alpha_j$, for $j=3, \dots, k$, such that its support  connects the cycles of $\prod_{l=2}^{j-1} \alpha_l$  with higher cardinality support. Hence,  $ \prod_{j=2}^k \alpha_j $ has a cycle bigger than 2 and $L\neq [2,\dots,2,1].$

\item If $\nu(D_1)<(d-1)/2$: then, from (\ref{ordem}),  $D_i\neq [2,\dots,2,1]$ for $i=1,\dots,k$. In this case $\nu(L)\geq (d+1)/2$ hence $L\neq [2,\dots,2,1]$. We assert that there is a choice of the $\alpha_i$'s such that $L$ has a summand equals to $1$. In fact, let $\alpha_2 \in D_2=[e_{2_1}, \dots, e_{2_{r_2}}, \underbrace{1,\dots,1}_{\ell_2}].$ Let us suppose that $e_{_{2_1}}\geq \dots \geq e_{_{2_{r_2-1}}}\geq e_{_{2_{r_2}}}>1$. By (II) we have $\ell_2 \geq 2$ then by doing the  construction in (III) for $\alpha_2$ such that for $j=3, \dots, k$ the permutation $\alpha_j$ connects the cycles of $\prod_{l=2}^{j-1} \alpha_l$ with bigger support, we obtain from $\nu(\mathscr{D})=d-1$, that $ \prod_{j=2}^k \alpha_j $ is the product of $(r_2-\ell_2)-\Big(\sum_{i=3}^{k} \Big(\sum_{l=1}^{r_i }(e_{i_{l}}-1)\Big)\Big)=\sum_{l=1}^{r_1}(e_{1_l}-1)+1>2$ cycles. Then, by the choices, at least one $1$-cycle of $\alpha_2$ is a factor. Hence $\gcd(L)=1.$
\end{enumerate}
Whatever the case,  $\overline{\mathscr{D}}:=\{D_1,L\} \neq \{[2,\dots,2,1], [2,\dots,2,1]\}$ is a collection of partitions of $d$ such that  $\nu(\overline{\mathscr{D}})=d-1$ with $\gcd(D_1)=\gcd(L)=1.$ Then, by Proposition \ref{2-indecomp},  $\mathscr{\overline{D}}$ is realizable by an indecomposable branched covering. This is, there exist permutations $\delta \in D_1, \lambda \in L$ and $\omega \in \Sigma_d$ such that $H:=\langle \delta, \lambda, \omega | \omega^2=\delta \lambda \rangle$ is primitive. Moreover, since $\Lambda$ and $\lambda$ are conjugated, there exists $\eta \in \Sigma_d$ such that $\lambda= \eta^{-1} \Lambda \eta$.\\
 Let $G:=\langle \delta, \eta^{-1} \alpha_2 \eta, \dots, \eta^{-1} \alpha_k \eta , \omega  \rangle,$
notice that:
\begin{itemize}[noitemsep, leftmargin=20pt]
\item[(i)] $\delta \in D_1$ and $\eta^{-1} \alpha_i \eta \in D_i$, for $i=2, \dots, k$.
\item[(ii)] $\delta \big( \prod_{i=2}^k \eta^{-1} \alpha_i \eta \big)=\delta (\eta^{-1} \Lambda \eta)=\delta \lambda = \omega^2 $.
\item[(iii)] $H < G$ and the primitivity of $H$ implies the primitivity of $G$.
\end{itemize}

{\bf If  there exists $i$ such that $\gcd(D_i)\neq1$}\\
From (I)(b) and (\ref{ordem}) we have $i=1$,   $\gcd(D_1)\neq 1$ and $\ell_1=0.$
\begin{enumerate}[noitemsep, leftmargin=15pt]
\item  If  $\langle \alpha_1, \dots, \alpha_k\rangle$ is transitive, then it is imprimitive. In fact: from $\nu(\mathscr{D})=d-1$ we have that ${\rm Supp}\, \alpha_i \cap {\rm Supp}\, \alpha_j = \emptyset$ for $i\neq j \geq 2$  and we can adapt Proposition \ref{2-trans} to $\overline{\mathscr{D}}=\{D_1, L\}$, where $L$ is defined as in the case above.  Hence $\langle \alpha_1, \prod_{i=2}^k \alpha_i \rangle$ will be imprimitive. Moreover, from the construction  in (III),  we have that ${\rm Supp}\, \alpha_j$ intersects $|{\rm Supp}\, \alpha_j|$ different cycles of $\prod_{i=1}^{j-1}\alpha_i$ 
 then  $\gcd(D_1)$ divides $\gcd(\prod_{i=1}^{j-1}\alpha_i)$ and each non-trivial cycle of $\alpha_j$ has support contained in a block of $\langle \alpha_1, \prod_{i=2}^k \alpha_i \rangle$. Hence $\langle \alpha_1, \prod_{i=2}^k \alpha_i, \alpha_2, \dots, \alpha_k \rangle=\langle \alpha_1,  \alpha_2, \dots, \alpha_k \rangle$ is imprimitive.  

\item  If  $\langle \alpha_1, \dots, \alpha_k\rangle$ is not  transitive, with $t>1$ orbits, but there exists $\omega$ such that the permutation group $\langle \alpha_1, \dots , \alpha_k, \omega |\alpha_1 \dots \alpha_k=\omega^2 \rangle$ is transitive, we assert that the group is imprimitive. The proof is essentially the same that the one for  $k=2$ and we write the details in the Appendix. \qed
\end{enumerate}

\section*{Appendix}\label{ap}
This appendix will be devoted to prove  a generalization  of Proposition \ref{dospuntos}. For this will suffice to show that Remark \ref{omega} and Remark \ref{orbitas} are satisfied for a general case,  as explained below.
As we shall see, the arguments are essentially the same. 

Let $\mathscr{D}=\{D_1,\dots,D_k\}$ be a collection of partitions of $d \in\mathbb{Z}^+$ odd non-prime and $k \geq 3$, such that $\nu(\mathscr{D})=d-1$ and $\gcd(D_1)\neq 1$. Let $\ell_i$ be the number of summands equal to $1$ in $D_i$: 
\begin{eqnarray}\label{general}
D_i=[e_{i_1}, \dots, e_{i_{r_i}}, \underbrace{1,\dots,1}_{\ell_i}] \textrm{ and } \alpha_i=\alpha_{i_1}\dots\alpha_{i_{r_i}} \in D_i, \textrm{ with } \alpha_{i_j} \textrm{ a } e_{i_j}\textrm{-cycle}
\end{eqnarray}
such that $H=\langle \alpha_1, \dots, \alpha_k\rangle$ is not  transitive. Suppose that  $t>1$ is the number of orbits $H$,
and  that there exists $\omega$ such that the permutation group $\langle \alpha_1, \dots , \alpha_k, \omega |\alpha_1 \dots \alpha_k=\omega^2 \rangle$ is transitive.\\

\noindent {\bf Remark \ref{omega}G} (About $\omega$).  This  remark is the same as   Remark \ref{omega} where we write  $H:=\langle \alpha_1,\cdots  \alpha_k \rangle$ and $\alpha\beta=\alpha_1\cdots  \alpha_k.$  The verification  is completely similar to the  one for   Remark \ref{omega}.

\vspace{5pt}
\noindent {\bf Remark \ref{orbitas}G}
 (About the orbits of $\langle \alpha_1,\cdots, \alpha_k \rangle$).  With the notation in (\ref{general}), for $i=1,\dots, k$ and $j=1,\dots,r_i$:
\begin{enumerate}[noitemsep, leftmargin=15pt] 
\item  Let $\Lambda_1$ be the set of orbits of $\langle \alpha_1 \rangle$;
\item  For $i \geq 2$, let $\Lambda_{i_j}$ be the set of orbits of $\langle \alpha_1,\dots, \alpha_{i-1}, \prod_{l=1}^j \alpha_{i_l} \rangle $. Notice that:
\begin{enumerate}[noitemsep, leftmargin=15pt]
\item $\Lambda_i:=\Lambda_{i_{r_i}}$ is the set of orbits of $\langle \alpha_1, \dots, \alpha_i \rangle$, 
\item With the convention  $\Lambda_{i_0}:= \Lambda_{i-1}$, set $n_{i_j}:=\#\{O \in \Lambda_{i_{j-1}} : {\rm Supp}\, \alpha_{i_j} \cap O \neq \emptyset\}$, then $\# \Lambda_{i_j}=\# \Lambda_{i_{j-1}}-n_{i_j}+1,$

   \item  If $n_i:= \sum_{j=1}^{r_i} n_{i_j}$ then  $\#\Lambda_i = \#\Lambda_{i-1}-n_i+r_i$ and:
   
   \vspace{-0.65cm}
      \begin{eqnarray}\label{t}
 t:= \# \Lambda_{k}=r_1-\sum_{j=2}^k n_j+ \sum_{j=2}^k r_j,
 \end{eqnarray}
\item If $|{\rm Supp}\, \alpha_i|-n_i>0$ then ${\rm Supp}\, \alpha_i$ intersects more than once  same orbit of $\Lambda_{i-1}$. We will say that $|{\rm Supp}\, \alpha_i|-n_i$ is {\it ``the number of multi-intersections"} of ${\rm Supp}\, \alpha_i$ on $\Lambda_{i-1}$;
 \end{enumerate} 
\item  By replacing (\ref{gen-trans}) in (\ref{t}), with $\ell_1=0$ because $\gcd(D_1)\neq 1$,  we obtain:

  \vspace{-0.65cm}
 \begin{eqnarray}\label{t-orbitas}
 t= \# \Lambda_{k}=\sum_{i=2}^k \Big(|{\rm Supp}\, \alpha_i|-n_i \Big) +1.
 \end{eqnarray}
\end{enumerate}

\begin{Prop}\label{N-g}[Generalization of Proposition \ref{N}] Let $\mathscr{D}=\{D_{1},\dots,D_{k}\}$  be a branch datum such that $\nu(\mathscr{D})=d-1$  and    $\gcd(D_1) \neq 1$. Let $\alpha_i \in D_i$, for $i=1, \dots,k.$  If $\langle \alpha_1,\dots, \alpha_k \rangle$ has $t>1$ 
orbits and  
 there exists $\omega$ such that $\langle \alpha_1,\dots, \alpha_k, \omega | \omega^2=\alpha_1 \dots \alpha_k \rangle$  is transitive then
  $\alpha_1\dots \alpha_k$ is the product of $2t-1$ cycles.
\end{Prop}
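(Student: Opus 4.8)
The plan is to reproduce, mutatis mutandis, the three-step argument used for Proposition \ref{N}, now fed by the generalized bookkeeping of Remarks \ref{omega}G and \ref{orbitas}G. Write $\sigma:=\alpha_1\cdots\alpha_k$ and let $N$ denote the number of cycles of $\sigma$, trivial cycles included, so that $N=d-\nu(\sigma)$. The first step is to read off from (\ref{t-orbitas}), using $\#\Lambda_k=t$, that the total number of multi-intersections produced by $\alpha_2,\dots,\alpha_k$ along the successive orbit sets $\Lambda_1,\Lambda_2,\dots,\Lambda_k$ is exactly $\sum_{i=2}^{k}\bigl(|{\rm Supp}\,\alpha_i|-n_i\bigr)=t-1$. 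This single identity plays the role that (\ref{No-orbitas}) played in the case $k=2$.

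For the upper bound $N\le 2t-1$ I would build $\sigma$ by inserting the cycles $\alpha_{i_j}$ one at a time and track the cycle count. A cycle $\alpha_{i_j}$ that meets the current partial product only in single intersections merely merges cycles and never raises the count; whenever it meets a cycle $\epsilon$ of the partial product in $z>1$ points it splits ${\rm Supp}\,\epsilon$ into at most $z$ cycles by Lemma \ref{intersection}, that is, into at most $z-1$ extra cycles, and $z-1$ is precisely the number of multi-intersections recorded at that step. Summing over all steps, the multi-intersections contribute at most $t-1$ extra cycles. Charging each multi-intersection to the (unique) orbit of $H=\langle\alpha_1,\dots,\alpha_k\rangle$ inside which it occurs, there are $t$ orbits and only $t-1$ charges, so by pigeonhole at least one orbit carries none; that orbit is built entirely by single intersections, whence $\sigma$ restricted to it is a single cycle, exactly as in Lemma \ref{d-ciclo} and Corollary \ref{b_i}. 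Therefore $N\le t+(t-1)=2t-1$.

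For the lower bound I would invoke Remark \ref{omega}G: since $H$ has $t$ orbits, $\omega$ must realize at least $t-1$ connections, each a type $1$ cycle interspersing two equal-length cycles of $\sigma$ lying in distinct orbits. These type $1$ cycles of $\omega$ have pairwise disjoint supports, so they consume $2(t-1)$ distinct cycles of $\sigma$, giving $N\ge 2(t-1)$. Finally, $\sigma=\omega^2$ is a square, hence an even permutation, while $d$ is odd; thus $\nu(\sigma)$ is even and $N=d-\nu(\sigma)$ is odd. Combining $2(t-1)\le N\le 2t-1$ with the oddness of $N$ forces $N=2t-1$.

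The main obstacle I anticipate is the upper bound. In the case $k=2$ all multi-intersections come from the single permutation $\beta$, whereas here they are distributed among $\alpha_2,\dots,\alpha_k$, so one must verify that Lemma \ref{intersection} applies verbatim at each multiplication step, with $\epsilon$ a cycle of the partial product $\alpha_1\cdots\alpha_{i_{j-1}}$ and $\delta=\alpha_{i_j}$, and that the ``at most one extra cycle per multi-intersection'' accounting survives when cycles created earlier are later merged or split again. The pigeonhole step isolating a multi-intersection-free orbit, together with the claim that $\sigma$ restricted to such an orbit is a single cycle, is the other point needing care; both follow the pattern of item (b) in the proof of Proposition \ref{N}.
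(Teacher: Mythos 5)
Your proposal is correct and follows essentially the same route as the paper: the paper proves Proposition \ref{N-g} by repeating the proof of Proposition \ref{N} verbatim, replacing (\ref{No-orbitas}) by (\ref{t-orbitas}) and Remarks \ref{omega}, \ref{orbitas} by their generalized versions, which is exactly the argument you spell out --- the multi-intersection count $t-1$, the upper bound via Lemma \ref{intersection} plus an orbit free of multi-intersections, the lower bound $2(t-1)\le N$ from the type~1 cycles of $\omega$, and the parity of $N$ forced by $\omega^2=\alpha_1\cdots\alpha_k$ with $d$ odd. Your per-orbit charging ($N\le t+(t-1)$) is only a cosmetic reorganization of the paper's count ($N\le 2(t-1)+1$), not a different method.
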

\begin{proof} The proof follows by replacing in the proof of Proposition \ref{N}: (\ref{No-orbitas}) by (\ref{t-orbitas}), Remarks \ref{omega} and  \ref{orbitas} by Remarks \ref{omega}G and  \ref{orbitas}G, respectively.
  \end{proof}
\noindent  Proposition \ref{N-g} implies that
$\omega$ is the product of $t$ cycles: $t-1$ of them are the 
type 1 and only one cycle of type 2 (see Remark \ref{omega}G). 
This  exhibit the minimal structure of $\omega$ to  guarantee the transitivity, i.e.: if $\omega=\omega_1\, \dots \, \omega_t$ and $\omega_{i}$ is of type 1, for $i=1,\dots,t-1$, then:
\vspace{-0.65cm}
\begin{eqnarray}\label{MinimalityG}
 \textrm{ $\omega_{i}$ connects exactly two orbits of $\langle \alpha_1, \prod_{i=2}^k \alpha_i, \prod_{j=1}^{i-1} \omega_j \rangle$},
 \end{eqnarray} 
 see Remark \ref{omega}G.

   \begin{Prop}\label{dospuntosG}[Generalization of Proposition \ref{dospuntos}] 
 Let  $G=\langle \alpha_1,\dots, \alpha_k, \omega| \omega^2=\alpha_1 \dots  \alpha_k \rangle$ be  a transitive permutation group on $\Omega$, with  $|\Omega|=d$. If $\gcd(\alpha_1)$ is a proper divisor of $d$ and  the subgroup  $\langle \alpha_1, \dots,  \alpha_k \rangle$ is not transitive, then there exist $x,\, y \in \Omega$ such that $G\neq \langle G_x,G_y \rangle$.
 \end{Prop}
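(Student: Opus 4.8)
The plan is to reproduce, \emph{mutatis mutandis}, the argument of Proposition \ref{dospuntos}, now feeding in the generalized ingredients assembled in the Appendix. First I would invoke Proposition \ref{N-g} — more precisely the analogue of item (b) in the proof of Proposition \ref{N} — to locate an orbit of $H := \langle \alpha_1, \dots, \alpha_k \rangle$ that is free of multi-intersections, and hence coincides with the support of a single cycle $\pi_1$ of the product $\alpha_1 \cdots \alpha_k$. I pick $x \in {\rm Supp}\, \pi_1$. By Remark \ref{omega}G there is a second cycle $\pi_2$ of $\alpha_1 \cdots \alpha_k$, of the same length as $\pi_1$ but lying in a different orbit of $H$, which one of the type-1 cycles of $\omega$ interleaves with $\pi_1$; I pick $y \in {\rm Supp}\, \pi_2$. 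These are the two points for which I will establish $G \neq \langle G_x, G_y \rangle$.

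Next I would dispose of the degenerate containments exactly as in the case $k=2$. Since $\gcd(\alpha_1) = \gcd(D_1)$ is a proper divisor of $d$ greater than $1$, every cycle length of $\alpha_1$ exceeds $1$, so $\alpha_1$ fixes no point and in particular $\alpha_1 \notin G_x$ and $\alpha_1 \notin G_y$. Consequently, if $G_x < G_y$ then $\langle G_x, G_y \rangle = G_y \subsetneq G$, and the case $G_y < G_x$ is symmetric. Hence I may assume $G_x \not< G_y$ and $G_y \not< G_x$, and set $J := \langle G_x, G_y \rangle$.

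The heart of the matter is the parity claim: $x^{g} \neq y$ for every $g \in J$, so that $J$ is intransitive and a fortiori $J \neq G$. Here I would transcribe the two observations from the proof of Proposition \ref{dospuntos}, now justified by the minimal structure of $\omega$ recorded in (\ref{MinimalityG}) together with Remark \ref{omega}G: (i) because $H$ preserves each of its own orbits while $\omega$ is the minimal connector between them, any word in the generators carrying a point of the orbit $x^{H}$ into the orbit $y^{H}$ must contain $\omega$ an \emph{odd} number of times; and (ii) every element of $G_x$ and every element of $G_y$, when written as a word in the generators, uses $\omega$ an \emph{even} number of times, the relation $\omega^2 = \alpha_1 \cdots \alpha_k$ being precisely what absorbs $\omega$-pairs into $H$. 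Since a product of elements of $G_x$ and $G_y$ inherits an even total $\omega$-count, no $g \in J$ can realize the odd crossing required to send $x$ to $y$. Therefore $y \notin x^{J}$, the group $J$ is not transitive, and $G \neq \langle G_x, G_y \rangle$, as required.

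The main obstacle I anticipate is making the parity bookkeeping in step (i)--(ii) genuinely rigorous rather than heuristic: one must argue that the number of occurrences of $\omega$ modulo $2$ is a well-defined invariant of how $g$ acts across the orbit partition of $H$, and that the $t-1$ type-1 cycles of $\omega$ behave like the edges of a spanning tree on the $t$ orbits, so that each crossing between two fixed orbits is forced to use $\omega$ an odd number of times. This is exactly where the minimality (\ref{MinimalityG}) does the work — it guarantees that each type-1 cycle $\omega_i$ connects exactly two orbits and reduces the number of components by one — and once this tree structure is made explicit the remaining verification is the same combinatorial argument as for $k=2$, with $\langle \alpha_1, \prod_{i=2}^{k} \alpha_i \rangle$ playing the role of $\langle \alpha, \beta \rangle$.
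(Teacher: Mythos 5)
Your proposal is correct and is essentially identical to the paper's own proof: the paper proves Proposition \ref{dospuntosG} precisely by rerunning the argument of Proposition \ref{dospuntos} with Proposition \ref{N} replaced by Proposition \ref{N-g}, Remark \ref{omega} by Remark \ref{omega}G, and (\ref{minimality}) by (\ref{MinimalityG}), which is exactly the substitution you carry out (choice of $x \in {\rm Supp}\,\pi_1$ and $y \in {\rm Supp}\,\pi_2$, the degenerate containment cases via $\alpha_1 \notin G_y$, and the odd/even $\omega$-count argument showing $x^g \neq y$ for all $g \in \langle G_x, G_y\rangle$). Your closing remark about making the parity bookkeeping rigorous is a fair observation, but it applies equally to the paper's own argument and does not constitute a departure from it.
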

 \begin{proof}
 The proof follows by replacing in the proof of Proposition \ref{dospuntos}: Proposition \ref{N} by Proposition \ref{N-g}, Remark \ref{omega} by Remark \ref{omega}G and (\ref{minimality}) by (\ref{MinimalityG}). 
\end{proof} 

\begin{Cor} Under the hypothesis of the Proposition \ref{dospuntosG} we have the the group $G$ is  imprimitive.
\end{Cor}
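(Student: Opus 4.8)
The plan is to deduce the result as an immediate consequence of Proposition \ref{dospuntosG} together with the contrapositive of Proposition \ref{HW}. First I would apply Proposition \ref{dospuntosG}, whose hypotheses we are assuming, to obtain two points $x,y \in \Omega$ with $G \neq \langle G_x, G_y\rangle$. I would note that these points are distinct: in the construction underlying Proposition \ref{dospuntosG} (inherited from Proposition \ref{dospuntos} via the generalisations in this appendix), $x$ lies in the support of a cycle $\pi_1$ of $\alpha_1\cdots\alpha_k$ and $y$ lies in the support of a cycle $\pi_2$ sitting in a \emph{different} orbit of $\langle \alpha_1,\dots,\alpha_k\rangle$, so $x \neq y$.

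Next I would assume for contradiction that $G$ is primitive. Since $G$ is transitive on $\Omega$ and $x\neq y$, Proposition \ref{HW} yields that either $G = \langle G_x, G_y\rangle$ or $G$ is a regular group of prime degree. The first possibility is ruled out directly by Proposition \ref{dospuntosG}. To exclude the second, I would observe that $\gcd(\alpha_1)=\gcd(D_1)$ is by hypothesis a proper divisor of $d$ with $\gcd(D_1)\neq 1$; hence $1 < \gcd(D_1) < d$ and $\gcd(D_1)\mid d$, so $d$ is composite and in particular not prime. A regular group of prime degree acts on a set whose cardinality is prime, so $G$, acting on $\Omega$ with $|\Omega|=d$ non-prime, cannot be of that type.

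Both alternatives of Proposition \ref{HW} being impossible, the hypothesis that $G$ is primitive is contradictory, and therefore $G$ is imprimitive. I do not expect any genuine obstacle here, since all the substantive combinatorial work has already been carried out in Proposition \ref{dospuntosG} and in Remarks \ref{omega}G and \ref{orbitas}G; the only points needing a line of verification are the distinctness $x\neq y$ and the non-primality of $d$, both of which are immediate from the standing hypotheses.
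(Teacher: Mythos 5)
Your proof is correct and takes essentially the same route as the paper: the paper's own one-line proof likewise combines Proposition \ref{dospuntosG} with the contrapositive of Proposition \ref{HW}, where the exclusion of the regular-prime-degree alternative is implicit from $d$ being non-prime (a point the paper spelled out earlier, in the two-branch-point case before the proof of Theorem \ref{iff2pt}). Your two explicit verifications ($x\neq y$ and the compositeness of $d$ via $1<\gcd(D_1)<d$) are precisely the details the paper leaves to the reader.
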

\begin{proof} Follows form the proposition above and Proposition \ref{HW}.
\end{proof}

Departamento de Matem\'atica DM-UFSCar

Universidade Federal  de S\~ao Carlos

Rod. Washington Luis, Km. 235. C.P 676 - 13565-905 S\~ao Carlos, SP - Brasil

nvbedoya@ufscar.br

\vspace{0.5cm}

Departamento de Matem\'atica

Instituto de Matem\'atica e Estat\'istica

Universidade de S\~ao Paulo

Rua do Mat\~ao 1010, CEP 05508-090, S\~ao Paulo, SP, Brasil.

dlgoncal@ime.usp.br


\begin{thebibliography}{99}



\bibitem{ACDHL} J. Ara\'ujo, J. P. Ara\'ujo, P. J. Cameron, T. Dobson, A. Hulpke, P. Lopes,  \emph{ Imprimitive permutations in primitive groups}, {\it Journal of Algebra} \textbf{486} 15,
 396-416 (2017)

\bibitem {BN0} N. A.  V. Bedoya,  \emph{Revestimentos ramificados e o problema da decomponibilidade},  Tese de doutorado IME-USP-Junho-2008.


\bibitem {BN&GDL1} N. A.  V. Bedoya, D. L. Gon\c{c}alves,  \emph{Decomposability problem on  branched coverings}, 
 {\it  Matematicheskii Sbornik-Sbornik Mathematics} \textbf{201}  12, 3-20 (2010).







\bibitem {BN&GDL2} N. A.  V. Bedoya and D. L. Gon\c{c}alves,  \emph{Primitivity of monodromy groups of branched coverings: a non-orientable case},  
{\it  JP J. Geom. Topol.} \textbf{12}  no. 2, 219-234 (2012).


\bibitem {BN&GDL3} N. A.  V. Bedoya, D. L. Gon\c{c}alves  and  E. Kudryavtseva,   Indecomposable branched coverings over the projective plane by surfaces M with $\chi(M)\leq 0$,
{\it  J. Knot Theory Ramifications}, \textbf{27}  no. 5, 1850030, 23 pp. (2018).


\bibitem {BGZ} S. Bogaty{\v{i}}; D. L. Gon\c calves,  H. Zieschang, \emph{The
  minimal number of roots of surface mappings and quadratic equations in free
  groups}. Mathematische Zeitschrift \textbf{236}, 419-452 (2001).              



\bibitem {DM} J. D. Dixon,  B. Mortimer,  \emph{Permutation groups}, {\it Graduate
Texts in Mathematics}, \textbf{163}, (1996)

\bibitem {EKS} A. L. Edmonds, R. S. Kulkarni, R. E. Stong,  \emph{Realizability of
  branched coverings of surfaces},  {\it Trans. Amer. Math. Soc.}, \textbf{282},
 (1984)

\bibitem {Ez} C. L. Ezell,  \emph{Branch point structure of covering maps onto
nonorientable surfaces}, {\it Trans. Amer. Math. Soc.}, \textbf{243}, 123-133 (1978)



\bibitem {Hu} D. H. Husemoller,  \emph{Ramified coverings of {R}iemann surfaces}, {\it
Duke Math. J.}, \textbf{29}, 167-174 (1962)

\bibitem {Lo} Pedro Lopes,   \emph{Permutations which make transitive groups primitive},
 {\it  Cent. Eur. J. Math.}, DOI: 10.2478/s11533-009-0050-3  \textbf{7} (4), 650-659 (2009). 









\bibitem {Wi}  H. Wielandt,  \emph{Finite Permutation Groups},  {\it  Academic Press Inc.}  (1964) 

\end{thebibliography}
\end{document}